\documentclass[10pt]{amsart}
\usepackage{amsmath,amssymb,amsthm,array,hyperref,color,cite,graphicx,multirow,pdflscape,mathrsfs}
\usepackage[all,arc]{xypic}
\usepackage[all]{xypic}

\parindent 0in
\parskip 2ex
\linespread{1.1}

\newtheorem{thm}{Theorem}[section]
\newtheorem{cor}[thm]{Corollary}
\newtheorem{lem}[thm]{Lemma}
\newtheorem{prop}[thm]{Proposition}
\theoremstyle{definition}

\newtheorem{df}[thm]{Definition}

\newtheorem{rmk}[thm]{Remark}

\newcommand{\R}{\mathbb R}
\newcommand{\C}{\mathbb C}
\newcommand{\Z}{\mathbb Z}

\newcommand{\Sph}{\mathbb S}
\newcommand{\cat}[1]{\textup{\textbf{{#1}}}}

\newcommand{\Map}{\textup{Map}}

\newcommand{\Tot}{\textup{Tot}}

\newcommand{\id}{\textup{id}}
\newcommand{\colim}{\textup{colim}\,}

\newcommand{\holim}{\textup{holim}\,}

\newcommand{\ra}{\longrightarrow}
\newcommand{\la}{\longleftarrow}
\newcommand{\sma}{\wedge}

\newcommand{\ti}{\widetilde}
\newcommand{\simar}{\overset\sim\longrightarrow}
\newcommand{\congar}{\overset\cong\longrightarrow}
\newcommand{\lcongar}{\overset\cong\longleftarrow}
\newcommand{\mc}{\mathcal}
\newcommand{\op}{\textup{op}}

\newcommand{\sd}{\textup{sd}}

\newcommand{\sk}{\textup{Sk}}

\newcommand{\cyc}{\textup{cyc}}

\renewcommand{\uline}{\underline}
\newcommand{\ev}{\textup{ev}}
\newcommand{\ob}{\textup{ob}}

\newcommand{\sh}{\textup{sh}}

\definecolor{grey}{gray}{0.4}

\setlength{\extrarowheight}{3pt}

\setcounter{tocdepth}{1}

\title[The topological Hochschild homology of $DX$]{Cyclotomic structure in the topological Hochschild homology of $DX$}
\author{Cary Malkiewich}

\begin{document}

\maketitle
\begin{abstract}
Let $X$ be a finite CW complex, and let $DX$ be its dual in the category of spectra. We demonstrate that the Poincar\'e/Koszul duality between $THH(DX)$ and the free loop space $\Sigma^\infty_+ LX$ is in fact a genuinely $S^1$-equivariant duality that preserves the $C_n$-fixed points. Our proof uses an elementary but surprisingly useful rigidity theorem for the geometric fixed point functor $\Phi^G$ of orthogonal $G$-spectra.
\end{abstract}

\parskip 0ex
\tableofcontents
\parskip 2ex

\section{Introduction.}

Topological Hochschild homology ($THH$) is a powerful and computable invariant of rings and ring spectra. Like ordinary Hochschild homology, it is built by a cyclic bar construction on the ring $R$, but with the tensor products of abelian groups $R \otimes_Z R$ replaced by smash products of spectra $R \sma_{\Sph} R$.

This construction was originally developed by B\" okstedt \cite{bokstedt1985topological}, using ideas of Goodwillie and Waldhausen. The result is a spectrum $THH(R)$ with a circle action. Out of its fixed points one builds topological cyclic homology $TC(R)$, a very close approximation to the algebraic $K$-theory spectrum $K(R)$. This machinery has been tremendously successful at advancing our understanding of $K(R)$ when $R$ is a discrete ring, and Waldhausen's functor $A(X) = K(\Sigma^\infty_+ \Omega X)$ for any space $X$, to say nothing of the $K$-theory of other ring spectra. The $THH$ construction is also of intrinsic interest when one studies topological field theories, and $TC$ appears to be an analog of ``crystalline cohomology'' from algebraic geometry.

In this paper we use $THH$ to study the ring spectrum $DX$, the Spanier-Whitehead dual of a finite CW complex.
% The spectra that come up have a more geometric flavor than many previous $THH$ computations, and so we focus on results that allow us to compute the homology groups of $THH$ and $TC$ rather than the homotopy groups.
We are motivated by classical work on the Hochschild homology of the cochains $C^*(X)$.
In \cite{jones1987cyclic}, Jones proved that when $X$ is simply-connected there is an isomorphism
\[ HH_*(C^*(X)) \cong H^*(LX) \]
where $LX$ is the space of free loops in $X$, and all homology is taken with field coefficients.
We investigate a lift of this theorem to spectra.
Namely, the functional dual of $THH(DX)$ is equivalent to $\Sigma^\infty_+ LX$ when $X$ is finite and simply-connected:
\begin{equation}\label{intro_thh_dual_map}
D(THH(DX)) \simeq \Sigma^\infty_+ LX \simeq THH(\Sigma^\infty_+ \Omega X)
\end{equation}
This was observed by Cohen in the course of some string-topology calculations. Kuhn proved a more general statement for the tensor of the commutative ring $DX$ with any unbased finite complex $K$, not just the circle $S^1$ \cite{kuhn2004mccord}.
The $THH$ duality (\ref{intro_thh_dual_map}) was also extended by Campbell from $(\Sigma^\infty_+ \Omega X,DX)$ to other pairs of Koszul-dual ring spectra \cite{campbell2014derived}. These generalizations can also be seen as special cases of the Poincar\'e/Koszul duality theorem of Ayala and Francis \cite{ayalapoincare}.

% applications of factorization homology: number theory and mathematical physics

If $X = M$ is a closed smooth manifold, we refer to (\ref{intro_thh_dual_map}) as Atiyah duality for the infinite-dimensional manifold $LM$.
Classical Atiyah duality is an equivalence of ring spectra $M^{-TM} \simeq DM$, where $M^{-TM}$ has the intersection product described in \cite{cohen2002homotopy}. If $K$ is a finite set, the $K$-fold multiplicative norm of $M^{-TM}$ is
\[ N^K (M^{-TM}) = \sma^k (M^{-TM}) = (M^k)^{-TM^{\oplus k}} = \Map(K,M)^{-T\Map(K,M)} \]
By analogy, we define the ``Thom spectrum'' of the infinite-dimensional virtual bundle $-TLM$ over $LM$ to be the multiplicative $S^1$-norm of $M^{-TM}$:
\[ LM^{-TLM} = \Map(S^1,M)^{-T\Map(S^1,M)} = N^{S^1} (M^{-TM}). \]
By Angeltveit et al. \cite{angeltveit2014relative}, the $THH$ of a commutative ring spectrum is a model for this multiplicative $S^1$-norm, so the duality (\ref{intro_thh_dual_map}) may be interpreted as
\[ D(LM^{-TLM}) \simeq \Sigma^\infty_+ LM \]

Previous work on the duality (\ref{intro_thh_dual_map}) has left open the question of whether it actually preserves any of the fixed points under the circle action.
We address this with the following theorem.
\begin{thm}\label{intro_dual_equivariant_thm}
When $X$ is finite and simply-connected, the map of (\ref{intro_thh_dual_map}) is an equivalence of cyclotomic spectra. It therefore induces equivalences of fixed point spectra
\[ \begin{array}{c}
\Phi^{C_n} D(THH(DX)) \simeq \Phi^{C_n} \Sigma^\infty_+ LX \\
{[D(THH(DX))]^{C_n}} \simeq [\Sigma^\infty_+ LX]^{C_n}
\end{array} \]
for all finite subgroups $C_n \leq S^1$.
\end{thm}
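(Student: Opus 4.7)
The plan is to promote the known non-equivariant equivalence (\ref{intro_thh_dual_map}) to an equivalence of cyclotomic spectra in three stages: build a point-set, $S^1$-equivariant duality map between carefully chosen orthogonal $S^1$-spectrum models; compute and identify the geometric $C_n$-fixed points on each side; and finally invoke the rigidity theorem advertised in the abstract to assemble these pieces into a cyclotomic equivalence.

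First I would realize $THH(DX)$ as the cyclic bar construction $B^{\cyc}_\bullet(DX)$ in orthogonal spectra, equipped with the usual cyclic $S^1$-action and the cyclotomic structure maps coming from edgewise subdivision together with the commutative multiplication on $DX$. On the other side, $\Sigma^\infty_+ LX = \Sigma^\infty_+ \Map(S^1, X)$ carries the rotation $S^1$-action, and its cyclotomic structure is determined by the identification $(LX)^{C_n} \cong LX$ coming from the $n$-fold cover $S^1 \to S^1$. To construct the duality map on these models, I would note that the simplicial level $(DX)^{\sma(n+1)}$ is Spanier-Whitehead dual to $X^{n+1} = \Map([n]_+, X)$; assembling these levelwise dualities in a cyclic manner — and using simple-connectedness to turn the totalization into a homotopy colimit — gives a natural $S^1$-equivariant map whose underlying map is (\ref{intro_thh_dual_map}).

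Second, I would compute the geometric fixed points separately. For $THH(DX)$, edgewise subdivision combined with the multiplicative structure identifies $\Phi^{C_n} THH(DX) \simeq THH(DX)$ in the standard way (compatible with Angeltveit et al.\ \cite{angeltveit2014relative}). For $\Sigma^\infty_+ LX$, the $n$-fold power map yields $\Phi^{C_n} \Sigma^\infty_+ LX \simeq \Sigma^\infty_+ LX$. After these identifications, I expect the $C_n$-geometric fixed points of our duality map to recover (\ref{intro_thh_dual_map}) itself, which is already known to be a non-equivariant equivalence.

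The hardest stage will be the last one: showing that the $S^1$-equivariant duality map intertwines the two descriptions of the cyclotomic structure — the multiplicative-norm picture on the $THH(DX)$ side and the $n$-fold-cover picture on the loop-space side — so that the identifications on $\Phi^{C_n}$ are genuinely compatible. This is exactly what the rigidity theorem is designed to handle: rather than producing explicit point-set $C_n$-equivariant homotopies to compare the two pictures, one checks that the underlying $S^1$-equivalence plus the separate identifications on $\Phi^{C_n}$ force the required diagram of cyclotomic structure maps to commute. Once the cyclotomic equivalence is established, the stated equivalences on $\Phi^{C_n}$ and on genuine $C_n$-fixed points follow at once from general properties of cyclotomic spectra.
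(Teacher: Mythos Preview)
Your outline has the right shape but misidentifies where the real work lies and how rigidity is actually used.

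First, the rigidity theorem does not do what your stage~3 asks of it. Rigidity constrains \emph{natural transformations} out of the functor $\Phi^G X_1 \sma \cdots \sma \Phi^G X_k$; it says nothing about a single map between two specific spectra like $\Sigma^\infty_+ LX$ and $D(THH(DX))$. In the paper, rigidity is invoked upstream, to set up the framework: it guarantees that the smash/geometric-fixed-point interchange $\alpha$ and the iterated fixed points map are compatible (Prop.~\ref{smash_iterated_commute}), and this is what allows one to put a well-defined \emph{pre-cyclotomic} structure on $F(T,T')$ for any tight cyclotomic $T$. But once that framework is in place, the verification that the comparison map $\Sigma^\infty_+ LX \to \Tot(Y^\bullet) \cong F(|N^\cyc cDX|,f\Sph)$ respects the pre-cyclotomic structures is done by an explicit diagram chase (diagram~(\ref{big_cyclotomic_structure_matchup})): one writes down the cosimplicial model $Z^\bullet = \Sigma^\infty_+ X^{\bullet+1}$, computes the loop-space side in coordinates, and checks the remaining squares by naturality of the HHR diagonal. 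Rigidity cannot shortcut this step.

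Second, you have entirely omitted the step that actually uses the hypotheses on $X$. Your stage~2 asserts $\Phi^{C_n} D(THH(DX)) \simeq D(THH(DX))$ as if it were formal, but it is not: $D(THH(DX))$ is \emph{a priori} only pre-cyclotomic, and for general $X$ it fails to be cyclotomic. The paper proves cyclotomicity by a connectivity argument on the skeletal filtration of $|\sd_r N^\cyc cDX|$: one dualizes the latching cofibers, identifies the resulting $C_r$-equivariant homotopy fibers as $\Omega^k \Sigma^\infty (X^r)^{\sma k} \sma X^r_+$, and uses simple-connectedness of $X$ to see that these fibers have genuine (hence geometric) fixed points whose connectivity tends to infinity. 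This is what forces $\Phi^{C_r}$ to commute with the homotopy limit and makes the derived structure map an equivalence. Without this argument you have no cyclotomic structure on the dual to compare against.
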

These notions of fixed points are recalled in section \ref{gspectra_basics}. Cyclotomic spectra are recalled in section \ref{cyclotomic}; the main examples are $THH(R)$ and $\Sigma^\infty_+ LX$, and this is the structure which allows us to compute $TC(R)$ and $TC(X)$.

Implicit in the above theorem is the construction of a cyclotomic structure on the dual $D(THH(DX))$. In fact we show that for any associative ring spectrum $R$, the functional dual $D(THH(R))$ comes with a natural \emph{pre-cyclotomic} structure, and in the case of $R = DX$ with $X$ finite and simply-connected, this becomes a cyclotomic structure.

We believe that this theorem suggests deeper connections between Waldhausen's functor $A(X)$ and the algebraic $K$-theory of $DX$.
We will attempt to explore this idea further in future work.

Our work on $THH(DX)$ builds on very recent results of Angeltveit, Blumberg, Gerhardt, Hill, Lawson, and Mandell \cite{angeltveit2012interpreting}, \cite{angeltveit2014relative}, along with the thesis of Martin Stolz \cite{stolz2011equivariant}.
They establish that the cyclic bar construction, in orthogonal spectra, has the same equivariant behavior as B{\"o}kstedt's original construction of topological Hochschild homology \cite{bokstedt1985topological}.
But in many respects, this cyclic bar construction is much simpler.
This leads to simplifications in the theory of $THH$, as well as new results, including those outlined above.

Our proofs also have consequences for the general theory of cyclotomic spectra and $G$-spectra. Let $G$ be a compact Lie group. We prove a rigidity result for the smash powers and geometric fixed points of orthogonal spectra, which appears to be new and of independent interest. Let $\Phi$ be the functor from $k$-tuples of orthogonal $G$-spectra to orthogonal spectra
\[ \Phi(X_1, \ldots, X_k) = \Phi^G X_1 \sma \ldots \sma \Phi^G X_k \]
where $\Phi^G$ is the monoidal geometric fixed point functor of \cite{mandell2002equivariant}.
\begin{thm}\label{intro_rigidity}
Suppose $\eta: \Phi \rightarrow F$ is a natural transformation, and $\eta$ is an isomorphism on every $k$-tuple of free $G$-spectra. Then there are only two natural transformations from $\Phi$ to $F$: the given transformation $\eta$, and zero.
\end{thm}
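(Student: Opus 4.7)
The plan is to reduce the classification of natural transformations $\zeta : \Phi \to F$ to understanding maps of orthogonal spectra out of a single universal tuple, and then to exploit the extreme point-set rigidity of the non-equivariant sphere spectrum in orthogonal spectra.

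First, consider the universal $k$-tuple $\mathbf{S} = (S_G,\ldots,S_G)$ of $G$-equivariant spheres. Since $\Phi^G$ is strong symmetric monoidal with $\Phi^G(S_G) = S$, we have $\Phi(\mathbf{S}) = S$, so $\zeta$ restricts to a single point-set morphism $\zeta_{\mathbf{S}} : S \to F(\mathbf{S})$. I would then establish that $\zeta$ is entirely recoverable from this one datum: for an arbitrary $\mathbf{X} = (X_1,\ldots,X_k)$, the cofiber sequence $E\mc{P}_+ \sma X_i \to X_i \to \ti{E\mc{P}} \sma X_i$ (with $\mc{P}$ the family of proper closed subgroups of $G$) expresses $X_i$ in terms of a free part, which is killed by $\Phi^G$ and, thanks to the free-vanishing hypothesis on $\eta$, also by $F$, together with a geometric part $\ti{E\mc{P}} \sma X_i$ built out of $\mathbf{S}$ via representation-sphere smashes and filtered colimits. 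Naturality of $\zeta$ then forces $\zeta_{\mathbf{X}}$ to be determined by $\zeta_{\mathbf{S}}$.

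Next, I would show that $\zeta_{\mathbf{S}}$ itself takes only two values, namely $\eta_{\mathbf{S}}$ or zero. The key input is that any endomorphism of the non-equivariant sphere $S$ in orthogonal spectra is determined by its level-zero value on $S^0 = \{+,\ast\}$ and hence is either the identity or the constant map, so $\End(S) = \{0, \id\}$. Applying naturality of $\zeta$ with respect to $\End(S_G)$ and comparing with $\eta_{\mathbf{S}}$ exhibits $\zeta_{\mathbf{S}}$ as a ``scalar multiple'' of $\eta_{\mathbf{S}}$ by an element of $\End(S)$, giving exactly the two options.

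The main obstacle I expect will be the first step: rigorously verifying that naturality, combined with the free-vanishing hypothesis, suffices to transport $\zeta_{\mathbf{S}}$ to the value of $\zeta$ on every tuple, without imposing additional structural assumptions such as exactness on the target functor $F$. This is precisely where the hypothesis that $\eta$ is an isomorphism on free tuples does its essential work: it ensures that the free part of each $X_i$ is invisible to $F$, so that $\zeta$ need only be understood on the geometric pieces where it can be compared with $\zeta_{\mathbf{S}}$ by naturality along the representation-sphere structure.
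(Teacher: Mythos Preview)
Your proposal has a fundamental misreading of the hypothesis, and even after correcting it the strategy does not go through.

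\textbf{Misreading of ``free.''} In this paper, the ``free $G$-spectra'' are the spectra of the form $F_V A$ for a $G$-representation $V$ and based $G$-space $A$ (the objects of the subcategory $\cat{Free}$). They are \emph{not} spectra with free $G$-action. In particular $\Phi^G$ does not vanish on them: $\Phi^G(F_V A) \cong F_{V^G}A^G$. So there is no ``free-vanishing hypothesis,'' and the spectrum $E\mc P_+ \sma X_i$ is not of the form $F_V A$; the hypothesis tells you nothing about $F$ applied to it.

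\textbf{Point-set vs.\ homotopical.} Even if one reinterprets the hypothesis, your argument depends on $F$ interacting well with the isotropy-separation cofiber sequence and with filtered colimits of representation spheres. But $F$ is a completely arbitrary point-set functor: it is not assumed to preserve cofibers, colimits, or weak equivalences. (The paper emphasizes this: the theorem is about point-set functors, not functors on the homotopy category; even $\Phi^G$ itself fails to preserve all pushouts.) So the step ``$\zeta_{\mathbf X}$ is determined by $\zeta_{\mathbf S}$ via the isotropy decomposition'' cannot be justified.

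\textbf{What the paper actually does.} The argument is entirely elementary and avoids any homotopy theory. First, one shows that $\Phi = \sma\circ(\Phi^G,\ldots,\Phi^G)$ is \emph{rigid}: a natural transformation out of $\Phi$ is determined by its restriction to $\prod^k\cat{Free}$. The reason is the tautological levelwise surjection
\[
\xi:\ \bigvee_{V\subset U} F_V X(V) \longrightarrow X,
\]
which after applying $\Phi$ remains a levelwise surjection of orthogonal spectra; hence any $\zeta:\Phi\to F$ is uniquely determined by its values on the sources, which are free. Second, one composes an arbitrary $\zeta$ with $\eta^{-1}$ on $\cat{Free}$ to get an endomorphism of $\Phi|_{\cat{Free}}$, and classifies those directly: start at the tuple $(F_0 S^0,\ldots,F_0 S^0)$, where an endomorphism of $\Sph$ is determined by a point of $S^0$ (so is $0$ or $\id$), then propagate to $(F_{V_1}S^0,\ldots,F_{V_k}S^0)$ using naturality along the evident maps $F_{V_i}S^0\to F_0 S^0$ and faithfulness of the $O(n)$-action on $S^n$, and finally to $(F_{V_1}A_1,\ldots,F_{V_k}A_k)$ using maps $F_{V_i}S^0\to F_{V_i}A_i$ hitting each fixed point of $A_i$. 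This gives exactly the two natural transformations $\eta$ and $0$.

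Your instinct that the value on the sphere-tuple controls everything is correct, but the mechanism is the surjection $\xi$ from free spectra, not isotropy separation.
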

We emphasize that this theorem applies to point-set functors of orthogonal spectra, not to functors defined on the homotopy category. It is designed to prove that certain point-set constructions strictly agree, thereby eliminating the need to construct coherence homotopies between them.

The rigidity theorem has a host of technical corollaries. Here are two of them.
\begin{cor}
For $G$ a finite group, the Hill-Hopkins-Ravenel diagonal map
\[ \Phi^H X \overset\Delta\ra \Phi^G N_H^G X \]
is the only nonzero natural transformation from $\Phi^H X$ to $\Phi^G N^G_H X$.
\end{cor}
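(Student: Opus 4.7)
The plan is to apply Theorem \ref{intro_rigidity} with $k = 1$ and with the ambient group taken to be $H$ rather than $G$. In this setup the source functor becomes $\Phi^H(-)$ on orthogonal $H$-spectra, and I take the target to be $F(X) = \Phi^G N_H^G X$, viewed as a functor from orthogonal $H$-spectra to orthogonal spectra. The HHR diagonal $\Delta$ furnishes one natural transformation $\Phi^H(-) \to F(-)$; once the hypothesis of the rigidity theorem is verified for $\Delta$, it immediately follows that the only natural transformations from $\Phi^H(-)$ to $F(-)$ are $\Delta$ and zero, which is what the corollary asserts.

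The step requiring the most care is verifying that $\Delta$ is an isomorphism whenever $X$ is a free orthogonal $H$-spectrum. On such an $X$, the source $\Phi^H X$ vanishes, since the geometric fixed points of a free action are null. For the target I would argue that $N_H^G$ carries free $H$-cells of the form $H_+ \sma Z$ to free $G$-cells of the form $G_+ \sma Z^{\sma [G:H]}$, and hence takes free $H$-spectra to free $G$-spectra at the point-set level; consequently $\Phi^G N_H^G X$ also vanishes. The diagonal is then the unique map between two zero objects and is trivially an isomorphism. The main obstacle I anticipate is confirming this freeness claim for general free $H$-cell complexes, not just for individual cells, but this should reduce to the cellwise statement by a colimit argument once one observes that both $N_H^G$ and $\Phi^G$ preserve the colimits used to assemble a cell complex from its cells.

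With the hypothesis in place, Theorem \ref{intro_rigidity} produces exactly two natural transformations $\Phi^H(-) \to \Phi^G N_H^G(-)$, namely $\Delta$ and the zero transformation. The diagonal is plainly nonzero: applied to $X = S^0$ with trivial $H$-action, it becomes the identity of $S^0$ under the identifications $\Phi^H S^0 = S^0$ and $\Phi^G N_H^G S^0 = S^0$. Hence $\Delta$ is the unique nonzero natural transformation from $\Phi^H X$ to $\Phi^G N_H^G X$, completing the proof.
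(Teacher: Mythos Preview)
Your overall strategy is right: apply the rigidity theorem with $k=1$ and ambient group $H$, taking the target functor to be $F(X) = \Phi^G N_H^G X$. But your verification of the hypothesis rests on a misreading of the word ``free.''

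In Theorem~\ref{intro_rigidity} (and throughout Section~\ref{rigidity_section}), a \emph{free $G$-spectrum} does not mean a spectrum with free $G$-action. It means an object of the subcategory $\cat{Free}$, i.e.\ a spectrum of the form $F_V A$ for a $G$-representation $V$ and a based $G$-space $A$, where $F_V$ is the left adjoint to evaluation at $V$. On such spectra $\Phi^H$ certainly does \emph{not} vanish: $\Phi^H F_V A \cong F_{V^H} A^H$. So your argument that both source and target are zero on ``free'' inputs is based on the wrong notion, and the step ``$\Delta$ is trivially an isomorphism between zero objects'' does not apply.

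The correct verification of the hypothesis is already in the paper: the section on the Hill--Hopkins--Ravenel norm isomorphism shows that $\Delta\colon \Phi^H X \to \Phi^G N_H^G X$ is an isomorphism whenever $X$ is cofibrant, and in particular when $X = F_V A$. That is precisely the input the rigidity theorem needs. Once you replace your vanishing argument with this, the rest of your outline (including the check that $\Delta$ is nonzero on $S^0$) goes through and matches the paper's approach.
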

This also applies to the subcategory of cofibrant spectra, giving an easy proof that the diagonal isomorphism constructed by Stolz \cite{brun2016equivariant} agrees with the one constructed by Hill, Hopkins, and Ravenel \cite{hhr}.
\begin{cor}
For $G$ a compact Lie group, the commutation map
\[ \Phi^G X \sma \Phi^G Y \overset\alpha\ra \Phi^G(X \sma Y) \]
is the only such natural transformation that is nonzero.
\end{cor}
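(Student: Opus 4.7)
The strategy is to invoke the rigidity theorem (Theorem~\ref{intro_rigidity}) with $k = 2$, taking
\[ \Phi(X_1, X_2) = \Phi^G X_1 \sma \Phi^G X_2, \qquad F(X_1, X_2) = \Phi^G(X_1 \sma X_2), \]
and $\eta = \alpha$ the commutation map. Once the hypothesis of Theorem~\ref{intro_rigidity} is verified, it immediately concludes that $\alpha$ and $0$ are the only two natural transformations $\Phi \to F$. Since $\alpha$ is plainly nonzero---evaluating at $X_1 = X_2 = \Sph$ makes both sides equal to $\Sph$ with $\alpha$ the identity---it must be the unique nonzero one, as claimed.

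The only thing to check is that $\alpha$ restricts to an isomorphism on every pair of free $G$-spectra. When $G$ is trivial this is immediate: every spectrum is free, $\Phi^G$ is the identity functor, and $\alpha$ is literally the identity. When $G$ is nontrivial, a free $G$-spectrum has the form $G_+ \sma A$ for an underlying orthogonal spectrum $A$, and each level $(G_+ \sma A)(V)$ carries a free $G$-action away from the basepoint, so its $G$-fixed subspace reduces to the basepoint. Unpacking the levelwise definition of the monoidal geometric fixed point functor from \cite{mandell2002equivariant}, one obtains $\Phi^G(G_+ \sma A) = \ast$ on the nose. For the target, one rewrites $(G_+ \sma A) \sma (G_+ \sma B) = (G \times G)_+ \sma A \sma B$; the diagonal $G$-action on $G \times G$ is again free, so for the same reason $\Phi^G$ of this smash product also vanishes. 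Both source and target of $\alpha$ collapse to a point, so $\alpha$ is trivially an isomorphism on every free pair.

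The main (modest) obstacle is establishing the triviality of $\Phi^G$ on free $G$-spectra strictly at the point-set level, since a point-set rigidity statement cannot be fed with merely homotopical information. This reduces to a direct inspection of the Mandell--May construction, which on a levelwise basis sends a based $G$-space with free action to the basepoint; no coherence or cofibrancy hypothesis is needed. With this in hand, Theorem~\ref{intro_rigidity} does all the real work and the corollary follows.
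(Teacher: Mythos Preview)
Your argument has a genuine gap: you have misread the meaning of ``free $G$-spectrum.'' In this paper (and throughout the Mandell--May framework) a \emph{free spectrum} is one of the form $F_V A$, the left adjoint to evaluation at a $G$-representation $V$ applied to a based $G$-space $A$; see Section~\ref{rigidity_section}, where $\cat{Free}$ is defined as the full subcategory on all $F_V A$. It does \emph{not} mean a spectrum of the form $G_+ \sma A$ with free group action. Your verification therefore checks the wrong hypothesis. The fact that $\Phi^G$ collapses $G_+ \sma A$ to a point is correct but irrelevant, and the conclusion that ``$\alpha$ is trivially an isomorphism on every free pair'' does not follow from what you wrote.

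What the paper actually checks (implicitly, using the listed properties of $\Phi^G$) is that on a pair $(F_V A, F_W B)$ one has
\[
\Phi^G F_V A \sma \Phi^G F_W B \;\cong\; F_{V^G} A^G \sma F_{W^G} B^G \;\cong\; F_{V^G \oplus W^G}(A^G \sma B^G)
\]
while
\[
\Phi^G(F_V A \sma F_W B) \;\cong\; \Phi^G F_{V \oplus W}(A \sma B) \;\cong\; F_{V^G \oplus W^G}\bigl((A \sma B)^G\bigr),
\]
and these agree via $\alpha$ because $(A \sma B)^G \cong A^G \sma B^G$ for based $G$-spaces. Once this is in place, the rigidity proposition for $\sma \circ (\Phi^G,\Phi^G)$ applies and the corollary follows. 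Your overall strategy (apply Theorem~\ref{intro_rigidity} with $k=2$) is the same as the paper's, but you must redo the verification step with the correct notion of free spectrum.
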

The rigidity theorem gives a useful framework for understanding how multiplicative structure interacts with cyclotomic structure in orthogonal spectra. Motivated by Kaledin's ICM address \cite{kaledin2010motivic}, we use Thm \ref{intro_rigidity} to place certain tensors and internal homs into the model category of cyclotomic spectra \cite{blumberg2013homotopy}. In particular, we get 
\begin{cor}
The homotopy category of cyclotomic spectra is tensor triangulated.
\end{cor}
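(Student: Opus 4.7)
The plan is to lift the smash product of orthogonal $S^1$-spectra to a symmetric monoidal structure on the homotopy category of cyclotomic spectra, using the commutation map $\alpha: \Phi^G X \sma \Phi^G Y \to \Phi^G(X \sma Y)$ and the rigidity theorem to enforce coherence at the point-set level. I would start from the Blumberg--Mandell stable model structure on cyclotomic spectra \cite{blumberg2013homotopy}, which already equips the homotopy category with a triangulated structure. The monoidal unit is $\Sph$ with its tautological cyclotomic structure.

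Next I would construct the tensor product. For suitably cofibrant cyclotomic spectra $(X, \rho^n_X)$ and $(Y, \rho^n_Y)$, define a cyclotomic structure on $X \sma Y$ by the composite
$$\rho^n_{X \sma Y}: \Phi^{C_n}(X \sma Y) \xleftarrow{\alpha} \Phi^{C_n} X \sma \Phi^{C_n} Y \xrightarrow{\rho^n_X \sma \rho^n_Y} X \sma Y,$$
where $\alpha$ is invertible in the homotopy category by cofibrancy. Uniqueness of $\alpha$ as a nonzero natural transformation (the second corollary above) means that the structure does not depend on any noncanonical choice.

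The key obstacle is verifying the coherence: (i) the cyclotomic compatibility relating $\rho^n_{X \sma Y}$, $\rho^m_{X \sma Y}$, and $\rho^{nm}_{X \sma Y}$; and (ii) the associator, unitor, and symmetry isomorphisms for $\sma$ on cyclotomic spectra. Each such identity is an equation between natural transformations built from $\Phi^{C_k}$ applied to smash powers together with the existing structure maps. By Theorem \ref{intro_rigidity}, any two such natural transformations that are both nonzero and agree on tuples of free $G$-spectra must be equal. For each coherence diagram, both composites are visibly nonzero, and on free spectra they reduce to the standard identity in ordinary orthogonal spectra; rigidity then forces them to agree on the nose, eliminating the need for higher coherence homotopies.

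Finally, I would check that smashing is a triangulated functor in each variable. In the Blumberg--Mandell model structure, smashing with a cofibrant orthogonal $S^1$-spectrum preserves cofiber sequences, and the construction of $\rho^n_{X \sma Y}$ above is compatible with these. Deriving $\sma$ then yields a symmetric monoidal structure on the homotopy category of cyclotomic spectra that is exact in each variable, hence tensor triangulated.
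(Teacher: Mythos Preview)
Your construction of the cyclotomic structure on $X \sma Y$ is exactly the one the paper uses, and the overall strategy is correct. However, your invocation of rigidity for the compatibility check (i) is too coarse, and as stated it does not go through.

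Theorem \ref{intro_rigidity} concerns natural transformations out of $\sma \circ (\Phi^G)^k$ as a functor on tuples of orthogonal $G$-spectra. The cyclotomic compatibility square for $X \sma Y$ is built not only from $\alpha$ and the iterated fixed points map $\textup{it}$, but also from the given structure maps $c_n^X:\Phi^{C_n}X\to X$ and $c_n^Y:\Phi^{C_n}Y\to Y$. These latter maps are extra data attached to the particular objects $X$ and $Y$; they are not natural in $X$ and $Y$ as $G$-spectra, so the two composites around the square are not competing natural transformations of the type to which Theorem \ref{intro_rigidity} applies. In particular there is no meaningful ``restriction to free $G$-spectra'' here, since a free spectrum $F_V A$ carries no canonical cyclotomic structure.

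The fix, and this is what the paper does, is to decompose the compatibility square. One sub-rectangle involves only $\alpha$ and $\textup{it}$; that rectangle is precisely Proposition \ref{smash_iterated_commute}, and \emph{that} is the piece established by rigidity. The remaining squares commute by ordinary naturality of $\alpha$ together with the assumed compatibility of $c_n^X$, $c_m^X$, $c_{mn}^X$ (and likewise for $Y$) as cyclotomic structure maps. Once you isolate the rigidity step in this way, the argument is complete; the associativity and symmetry checks in (ii) are handled similarly, with rigidity supplying the associativity of $\alpha$ and naturality supplying the rest.
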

Barwick and Glasman have recently extended this program further, see \cite{barwick2016cyclonic}.

The paper is organized as follows. In section 2 we review the theory of cyclic spaces and spectra. In section 3 we review orthogonal $G$-spectra, and prove Theorem \ref{intro_rigidity}. In section 4 we combine the previous two sections and develop the norm model of $THH$ following \cite{angeltveit2014relative}. In section 5 we study the interaction of multiplicative structure and cyclotomic structure, proving Theorem \ref{intro_dual_equivariant_thm}.

The author is grateful to acknowledge Andrew Blumberg, Jon Campbell, Ralph Cohen, and Randy McCarthy for several helpful and inspiring conversations throughout this project. He thanks Nick Kuhn for insightful comments on the first version of the paper, and the anonymous referee for a very close reading that substantially improved the exposition throughout. This paper represents a part of the author's Ph.D. thesis, written under the direction of Ralph Cohen at Stanford University.

\section{Review of cyclic spaces.}\label{sec:cyclic}

A \emph{cyclic set} is a simplicial set with extra structure, which allows the geometric realization to carry a natural $S^1$-action \cite{connes1983cohomologie}.
Similarly one may define \emph{cyclic spaces} and \emph{cyclic spectra}.
In this section we collect together the main results of the theory of cyclic spaces, and their extensions to cocyclic spaces.
We also describe (co)cyclic orthogonal spectra, though we defer the study of their equivariant behavior to section \ref{sec:bar}.
This section is all standard material from \cite{dhk}, \cite{jones1987cyclic}, \cite{bhm}, and \cite{madsen_survey} or a straightforward generalization thereof, but we make an effort to be definite and explicit in areas where our later proofs require it.
We will also be brief; the reader seeking more complete proofs is referred to the author's thesis \cite{malkiewich2014duality}.

\subsection{The category $\mathbf\Lambda$ and the natural circle action.}\label{cyclic_basics}

Recall that $\mathbf\Delta$ is a category with one object $[n] = \{0,1,\ldots,n\}$ for each $n \geq 0$.
The morphisms $\mathbf\Delta([m],[n])$ are the functions $f: [m] \rightarrow [n]$ which preserve the total ordering.
It is generated by the \emph{coface maps} and \emph{codegeneracy maps}
\[
\begin{tabular}{ccc}
$d^i: [n-1] \ra [n], \quad 0 \leq i \leq n$ && $s^i: [n+1] \ra [n], \quad 0 \leq i \leq n$  \\
$d^i(j) = \left\{ \begin{array}{ccc} j &\textup{ if }& j < i \\ j+1 &\textup{ if }& j \geq i \end{array} \right.$
&& $s^i(j) = \left\{ \begin{array}{ccc} j &\textup{ if }& j \leq i \\ j-1 &\textup{ if }& j > i \end{array} \right.$
\end{tabular}
\] %$
A simplicial object of $\cat C$ is a contravariant functor $X_\bullet: \mathbf\Delta^\op \to \cat C$. We are interested in the case where $\cat C$ is based spaces or orthogonal spectra.
Any simplicial object $X_\bullet$ has a canonical presentation
\[ \bigvee_{m,n} \mathbf\Delta(\bullet,[m])_+ \sma \mathbf\Delta([m],[n])_+ \sma X_n \rightrightarrows \bigvee_n \mathbf\Delta(\bullet,[n])_+ \sma X_n \rightarrow X_\bullet \]
There is a geometric realization functor $|-|$ taking simplicial spaces to spaces. It is the unique colimit-preserving functor that takes $\Delta[n]$ to $\Delta^n$, the convex hull of the standard basis vectors in $\R^{n+1}$. It turns out that for simplicial based spaces $X_\bullet$, the realization $|X_\bullet|$ is given by either of the two coequalizers
\[ \coprod_{m,n} \Delta^m \times \mathbf\Delta([m],[n]) \times X_n \rightrightarrows \coprod_n \Delta^n \times X_n \ra |X_\bullet| \]

\vspace{-1.5em}
\[ \bigvee_{m,n} \Delta^m_+ \sma \mathbf\Delta([m],[n])_+ \sma X_n \rightrightarrows \bigvee_n \Delta^n_+ \sma X_n \ra |X_\bullet|. \]
When $X_\bullet$ is a simplicial orthogonal spectrum we define $|X_\bullet|$ by the latter of these two formulas.

Connes' cyclic category $\mathbf\Lambda$ has the same objects as $\mathbf\Delta$, but more morphisms. Let $[n]$ denote the free category on the arrows
% Picture of circle category
\[ \xy 0;<36pt,0pt>:
a(0)*{\bullet}="n-1"+(0.2,0)*!L{n-1};
a(120)*{\bullet}="2"+(-0.2,0.2)*!DR{2};
a(180)*{\bullet}="1"+(-0.2,0)*!R{1};
a(240)*{\bullet}="0"+(-0.2,-0.2)*!UR{0};
a(300)*{\bullet}="n"+(0.2,-0.2)*!UL{n};
% 0;a(30) **\dir{}; (0,0)*\ellipse(1)__,=:a(240){-};
% use this line to figure out the 1.13 factor:
% a(30);a(-30) **\crv{(1.13,0)}; ?>*\dir{>};
% need a factor of 1.13 on the correction
% 1.13cos(30), 1.13sin(30)
"n-1";"n" **\crv{(0.9786,-0.565)}; ?>*\dir{>};
"n";"0" **\crv{(0,-1.13)}; ?>*\dir{>};
"0";"1" **\crv{(-0.9786,-0.565)}; ?>*\dir{>};
"1";"2" **\crv{(-0.9786,0.565)}; ?>*\dir{>};
% make the dotted arc
0;a(-90) **\dir{}; (0,0)*\ellipse(1)__,=:a(120){.};
\endxy \]
The geometric realization $|N_\bullet [n]|$ of the nerve of the category $[n]$ is homotopy equivalent to the circle.
The set $\mathbf\Lambda([m],[n])$ consists of those functors $[m] \rightarrow [n]$ which give a degree 1 map on the geometric realizations. This is generated by maps in $\mathbf\Delta$ plus a \emph{cycle map} $\tau_n: [n] \rightarrow [n]$ for each $n \geq 0$:
% picture of \tau_n
\[ \xy 0;<12pt,0pt>:
(-3,0)*{\bullet}="U1"+(0,1)*!D{1},
(0,0)*{\bullet}="U0"+(0,1)*!D{0},
(3,0)*{\bullet}="Un"+(0,1)*!D{n},
"U1"-(3,0)*{\cdots},
"Un"+(3,0)*{\cdots},
(-3,-4)*{\bullet}="D1"+(0,-1)*!U{1},
"D1"+(3,0)*{\bullet}="D0"+(0,-1)*!U{0},
"D0"+(3,0)*{\bullet}="Dn"+(0,-1)*!U{n},
"D1"-(3,0)*{\cdots},
"Dn"+(3,0)*{\cdots},
%"Dn"+(6,2)*{\tau_n},
"U0";"U1" **\dir{-}; ?>*\dir{>};
"Un";"U0" **\dir{-}; ?>*\dir{>};
"D0";"D1" **\dir{-}; ?>*\dir{>};
"Dn";"D0" **\dir{-}; ?>*\dir{>};
"U1";"D0" **\dir{-}; ?>*\dir{>};
"U0";"Dn" **\dir{-}; ?>*\dir{>};
\endxy \]
We may also generate $\mathbf\Lambda$ by $\mathbf\Delta$ and an \emph{extra degeneracy map} $s^{n+1}: [n+1] \rightarrow [n]$ for each $n \geq 0$, corresponding to the functor $[n+1] \rightarrow [n]$ pictured below:
% picture of s^{n+1}
\[ \xy 0;<12pt,0pt>:
(-3,0)*{\bullet}="U1"+(0,1)*!D{1},
(0,0)*{\bullet}="U0"+(0,1)*!D{0},
(3,0)*{\bullet}="Un+1"+(0,1)*!D{n+1},
(6,0)*{\bullet}="Un"+(0,1)*!D{n},
"U1"-(3,0)*{\cdots},
"Un"+(3,0)*{\cdots},
(-2,-4)*{\bullet}="D1"+(0,-1)*!U{1},
"D1"+(3,0)*{\bullet}="D0"+(0,-1)*!U{0},
"D0"+(3,0)*{\bullet}="Dn"+(0,-1)*!U{n},
"D1"-(3,0)*{\cdots},
"Dn"+(3,0)*{\cdots},
%"Dn"+(8,2)*{s^{n+1}},
"U0";"U1" **\dir{-}; ?>*\dir{>};
"Un+1";"U0" **\dir{-}; ?>*\dir{>};
"Un";"Un+1" **\dir{-}; ?>*\dir{>};
"D0";"D1" **\dir{-}; ?>*\dir{>};
"Dn";"D0" **\dir{-}; ?>*\dir{>};
"U1";"D1" **\dir{-}; ?>*\dir{>};
"U0";"D0" **\dir{-}; ?>*\dir{>};
"Un+1";"D0" **\dir{-}; ?>*\dir{>};
"Un";"Dn" **\dir{-}; ?>*\dir{>};
\endxy \]
We note that a morphism $f \in \mathbf\Lambda([m],[n])$ is determined by the underlying map of sets $\Z/(m+1) \to \Z/(n+1)$, unless this map of sets is constant, in which case $f$ is determined by which arrow in $[m]$ is sent to a nontrivial arrow in $[n]$.

\begin{df}
A \emph{cyclic based space} is a functor $X_\bullet: \mathbf\Lambda^\op \to \cat{Top}_*$.
The \emph{geometric realization} $|X_\bullet|$ is defined by restricting $X_\bullet$ to $\mathbf\Delta^\op$ and taking the geometric realization of the resulting simplicial space.
\end{df}

\begin{thm}[e.g. \cite{dhk}]
The geometric realization $|X_\bullet|$ of a cyclic based space $X$ carries a natural based $S^1$-action.
\end{thm}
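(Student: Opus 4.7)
The plan is to construct the $S^1$-action on $|X_\bullet|$ by an explicit piecewise formula using the cycle operator $\tau_n$, following the classical recipe of Loday and Dwyer-Hopkins-Kan.

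First I would regard a point of $|X_\bullet|$ as an equivalence class $[x; u_0, \ldots, u_n]$ with $x \in X_n$ and $(u_0, \ldots, u_n) \in \Delta^n$, interpreting the $u_i$ as arc-lengths of $n+1$ consecutive arcs that subdivide a unit circle. Parametrizing $S^1 = \R/\Z$, the action of $t \in [0, 1)$ is defined as follows. At the distinguished times $t = u_0 + \ldots + u_{k-1}$, the rotated subdivision is again $(n+1)$-partite, and I set
\[ t \cdot [x; u_0, \ldots, u_n] \;=\; [\tau_n^k(x); u_k, u_{k+1}, \ldots, u_n, u_0, \ldots, u_{k-1}]. \]
For $t$ lying strictly inside the $k$-th arc at relative position $s$, the rotation splits that arc into subarcs of lengths $u_k - s$ and $s$, producing a point in dimension $n+1$; one applies a power of $\tau_{n+1}$ and then collapses back to dimension $n$ using the extra degeneracy $s^{n+1}: [n+1] \to [n]$ recalled in the excerpt.

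Next I would verify that this formula descends to a well-defined continuous $S^1$-action on $|X_\bullet|$, and is natural in $X_\bullet$. Three checks are required. \textbf{(i)} Independence of the simplicial representative: for each $f \in \mathbf\Delta([m],[n])$, one checks that $t \cdot [f^*(x); u]$ and $t \cdot [x; f_*(u)]$ agree in $|X_\bullet|$, reducing to cocyclic identities such as $d^i \tau_n = \tau_{n-1} d^{i-1}$ and $s^i \tau_n = \tau_{n+1} s^{i-1}$. \textbf{(ii)} Continuity at the breakpoints $t = u_0 + \ldots + u_{k-1}$: the apparent jump between the "dimension-$n$" value and the neighboring "dimension-$(n+1)$" value is cancelled by the extra degeneracy $s^{n+1}$, which collapses the degenerate arc of length $0$ to a non-degenerate arc. \textbf{(iii)} The group law $(t_1 + t_2) \cdot z = t_1 \cdot (t_2 \cdot z)$ together with $0 \cdot z = z$, which follow by a direct calculation using $\tau_n^{n+1} = \id$.

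The main obstacle is verifying continuity at the breakpoints in step \textbf{(ii)}: this is the nontrivial content of the theorem, and is precisely where the presence of the extra degeneracy $s^{n+1}$ in $\mathbf\Lambda$ beyond the generators of $\mathbf\Delta$ is used. Once the well-definedness and continuity are established, naturality in $X_\bullet$ is automatic, since the formula manipulates only $\mathbf\Lambda$-structure maps applied to $x$.
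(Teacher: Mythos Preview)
Your approach is correct but takes a different route from the paper. The paper argues by reduction to representables: since every cyclic space is a colimit of the $\Lambda[n] = \mathbf\Lambda(-,[n])$ and geometric realization preserves colimits, it suffices to put compatible $S^1$-actions on the spaces $\Lambda^n := |\Lambda[n]|$. One checks combinatorially that $\Lambda^n \cong S^1 \times \Delta^n$, takes rotation on the first factor, and verifies that these actions commute with the maps of $\mathbf\Lambda$. Naturality and the group law then come for free from the coend description; all of the work is concentrated in the single identification $\Lambda^n \cong S^1 \times \Delta^n$.

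Your explicit-formula route (in the style of Loday and Jones) has the advantage of making the action visible on individual simplices, which is helpful for concrete calculations, at the cost of the three pointwise verifications you list. One minor wording issue: in the generic-$t$ case you do not really ``collapse back to dimension $n$.'' In the Jones formulation one applies the extra degeneracy $s_{n+1}: X_n \to X_{n+1}$ to $x$, pairs the result with the split-arc vector in $\Delta^{n+1}$, and the answer simply lives at level $n+1$; any reduction to level $n$ happens automatically through the realization's equivalence relation rather than as a separate application of a $\mathbf\Lambda$-map. With that adjustment your plan goes through.
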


\begin{proof}
The cyclic space $X_\bullet$ is a colimit of representable cyclic sets
\[ \Lambda[n] = \mathbf\Lambda(-,[n]) \]
So, it suffices to prove that the space
\[ \Lambda^n := |\Lambda[n]| \]
has an $S^1$ action for all $n$, commuting with the action of the category $\mathbf\Lambda$. By a combinatorial argument, we have homeomorphisms $\Lambda^n \cong S^1 \times \Delta^n$, and we define an $S^1$ action by translation on the first coordinate. These actions commute with the action of $\mathbf\Lambda$, and so they pass to the realization.
We draw a few special cases of $\Lambda^n$ and how it compares to the simplicial circle times $\Delta^n$.

\resizebox{\textwidth}{!}{$
\begin{array}{c|cc}
n & \Lambda^n & \Delta[1]/\partial \times \Delta[n] \\\hline
0 &
	{\xy 0;<15pt,0pt>:
		(0,0)*{\bullet}="00"+(-1,0)*!R{(0,0)},
		(4,0)*{\bullet}="10"+(1,0)*!L{(1,0)},
		"00","10" **\dir{-}; ?(.55)*\dir{>};
	\endxy}
&
	{\xy 0;<15pt,0pt>:
		(0,0)*{\bullet}="00"+(-1,0)*!R{(0,0)},
		(4,0)*{\bullet}="10"+(1,0)*!L{(1,0)},
		"00","10" **\dir{-}; ?(.55)*\dir{>};
	\endxy}
	\\[.5em]\hline
1 &
	{\xy 0;<15pt,0pt>:
		(0,0)*{\bullet}="00"+(-1,0)*!R{(0,0)},
		(4,0)*{\bullet}="10"+(1,0)*!L{(1,0)},
		(0,-3)*{\bullet}="01"+(-1,0)*!R{(0,1)},
		(4,-3)*{\bullet}="11"+(1,0)*!L{(1,1)},
		"00";"01" **\dir{-}; ?(.55)*\dir{>};
		"10";"11" **\dir{-}; ?(.55)*\dir{>};
		"00";"10" **\dir{-}; ?(.55)*\dir{>};
		"01";"11" **\dir{-}; ?(.55)*\dir{>};
		"01";"10" **\dir{-}; ?(.55)*\dir{>};
	\endxy}
&
	{\xy 0;<15pt,0pt>:
		(0,0)*{\bullet}="00"+(-1,0)*!R{(0,0)},
		(4,0)*{\bullet}="10"+(1,0)*!L{(1,0)},
		(0,-3)*{\bullet}="01"+(-1,0)*!R{(0,1)},
		(4,-3)*{\bullet}="11"+(1,0)*!L{(1,1)},
		"00";"01" **\dir{-}; ?(.55)*\dir{>};
		"10";"11" **\dir{-}; ?(.55)*\dir{>};
		"00";"10" **\dir{-}; ?(.55)*\dir{>};
		"01";"11" **\dir{-}; ?(.55)*\dir{>};
		"00";"11" **\dir{-}; ?(.55)*\dir{>};
	\endxy}
	\\[5em]\hline\\[-1em]
2 &
	{\xy 0;<15pt,0pt>:
		(0,0)*{\bullet}="00"+(-1,0)*!R{(0,0)},
		(7,0)*{\bullet}="10"+(1.7,0)*!L{(1,0)},
		(1,2)*{\bullet}="01"+(-1,0)*!R{(0,1)},
		(8,2)*{\bullet}="11"+(1,0)*!L{(1,1)},
		(2,-2)*{\bullet}="02"+(-1,0)*!R{(0,2)},
		(9,-2)*{\bullet}="12"+(1,0)*!L{(1,2)},
		% Arrows on the left and right ends
		"00";"01" **\dir{-}; ?(.55)*\dir{>};
		"01";"02" **\dir{-}; ?(.45)*\dir{>};
		"00";"02" **\dir{-}; ?(.55)*\dir{>};
		"10";"11" **\dir{-}; ?(.55)*\dir{>};
		"11";"12" **\dir{-}; ?(.55)*\dir{>};
		"10";"12" **\dir{-}; ?(.55)*\dir{>};
		% Arrows left-to-right on the outside
		"00";"10" **\dir{--}; ?(.55)*\dir{>};
		%?!{"01";"02"}="hole"; "00";"hole" **\dir{-}; "hole";"10" **\dir{-};
		%"00";"10" **\crv{} \POS?(.5)*{\hole}; ?(.55)*\dir{>};
		"01";"11" **\dir{-}; ?(.55)*\dir{>};
		"02";"12" **\dir{-}; ?(.55)*\dir{>};
		% Arrows in the gooey center
		"01";"10" **\dir{--}; ?(.55)*\dir{>};
		"02";"11" **\dir{-}; ?(.60)*\dir{>};
		"02";"10" **\dir{--}; ?(.55)*\dir{>};
		% Arrows behind other arrows
		%\PATH ~={**\dir{-}}
		%\POS"00" \ar @{-} "10" \POS?(.3)|\hole
		%\POS"01" \ar @{-} "10"|\uparrow
	\endxy}
&
	{\xy 0;<15pt,0pt>:
		(0,0)*{\bullet}="00"+(-1,0)*!R{(0,0)},
		(7,0)*{\bullet}="10"+(1.7,0)*!L{(1,0)},
		(1,2)*{\bullet}="01"+(-1,0)*!R{(0,1)},
		(8,2)*{\bullet}="11"+(1,0)*!L{(1,1)},
		(2,-2)*{\bullet}="02"+(-1,0)*!R{(0,2)},
		(9,-2)*{\bullet}="12"+(1,0)*!L{(1,2)},
		% Arrows on the left and right ends
		"00";"01" **\dir{-}; ?(.55)*\dir{>};
		"01";"02" **\dir{-}; ?(.55)*\dir{>};
		"00";"02" **\dir{-}; ?(.55)*\dir{>};
		"10";"11" **\dir{-}; ?(.55)*\dir{>};
		"11";"12" **\dir{-}; ?(.55)*\dir{>};
		"10";"12" **\dir{-}; ?(.55)*\dir{>};
		% Arrows left-to-right on the outside
		"00";"10" **\dir{--}; ?(.55)*\dir{>};
		"01";"11" **\dir{-}; ?(.55)*\dir{>};
		"02";"12" **\dir{-}; ?(.55)*\dir{>};
		% Arrows in the gooey center
		"00";"11" **\dir{--}; ?(.55)*\dir{>};
		"01";"12" **\dir{-}; ?(.55)*\dir{>};
		"00";"12" **\dir{--}; ?(.55)*\dir{>};
	\endxy}
	\\[3.5em]\hline
\end{array}
$}
\end{proof}

\subsection{Skeleta and latching objects.}\label{sec:skeleta_latching}

When $X_\bullet$ is a simplicial space, the $n$th \emph{skeleton} $\sk_n X_\bullet$ is obtained by restricting $X_\bullet$ to the subcategory of $\mathbf\Delta^\op$ on the objects $0,\ldots,n$ and then taking a left Kan extension back.
The geometric realization of each skeleton is obtained from the previous one by a pushout square
\begin{equation}\label{latching_square}
\xymatrix{
L_n X \times \Delta^n \cup_{L_n X \times \partial\Delta^n} X_n \times \partial\Delta^n \ar[r] \ar[d] & X_n \times \Delta^n \ar[d] \\
|\sk_{n-1} X_\bullet| \ar[r] & |\sk_n X_\bullet| }
\end{equation}
Here $L_n X$ is the $n$th \emph{latching object}, the subspace of $X_n$ consisting of all points in the images of some degeneracy map $s_i: X_{n-1} \to X_n$, $0 \leq i \leq n-1$.
Alternatively, to each proper subset $S \subseteq \{0,1,\ldots,n\}$ that contains 0, we define a map of totally ordered sets $[n] \to S$ by rounding down to the nearest element of $S$.
This makes $X_S$ into a subspace of $X_n$, and the colimit of these subspaces under inclusions $S \subset T$ gives the subspace $L_n X$.

\begin{df}
$X_\bullet$ is \emph{Reedy $q$-cofibrant} if each $L_n X \to X_n$ is a cofibration in the Quillen model structure on based spaces.
$X_\bullet$ is \emph{Reedy $h$-cofibrant} if each $L_n X \to X_n$ is a classical cofibration, i.e. a map satisfying the unbased homotopy extension property.
\end{df}
We have stated these definitions for based spaces, but they also apply to orthogonal spectra.
There is a standard cofibrantly generated model structure that provides the $q$-cofibrations, while the $h$-cofibrations are defined as maps having the homotopy extension property with respect to the cylinders $X \sma I_+$ \cite{mmss}.
So the following standard theorem applies to both spaces and spectra, with either notion of ``cofibration:''
\begin{prop}\label{reedy_cofibrant_spectra}
If $X_\bullet$ is Reedy cofibrant then $|X_\bullet|$ is cofibrant.
If both $X_\bullet$ and $Y_\bullet$ are Reedy cofibrant, then any map $X_\bullet \simar Y_\bullet$ that is an equivalence on each simplicial level induces an equivalence $|X_\bullet| \simar |Y_\bullet|$.
\end{prop}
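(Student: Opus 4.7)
The plan is to prove both statements by induction along the skeletal filtration, using the pushout square \eqref{latching_square} as the basic inductive step. The two key inputs will be (a) the pushout--product axiom, which implies that the left vertical map in \eqref{latching_square} is a cofibration whenever $L_n X \to X_n$ is, and (b) the gluing lemma for cofibrations, which lets us promote levelwise equivalences to equivalences on pushouts.

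For the first assertion, I would induct on $n$ to show each $|\sk_n X_\bullet|$ is cofibrant, with base case $|\sk_0 X_\bullet| = X_0$ (cofibrant since $L_0 X = *$). For the inductive step, the standard pushout--product axiom applied to the Reedy cofibration $L_n X \to X_n$ and the boundary inclusion $\partial\Delta^n_+ \to \Delta^n_+$ shows that the left vertical arrow in \eqref{latching_square} is a cofibration; both q-cofibrations and h-cofibrations satisfy such a pushout--product property in based spaces and in orthogonal spectra, so a single argument covers both cases. Since cofibrations are stable under pushout, the inclusion $|\sk_{n-1} X_\bullet| \hookrightarrow |\sk_n X_\bullet|$ is a cofibration, and by induction each skeleton is cofibrant. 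Passing to the sequential colimit gives $|X_\bullet|$ cofibrant.

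For the second assertion, I would induct on $n$ to show simultaneously that $L_n f : L_n X \to L_n Y$ and $|\sk_n f| : |\sk_n X_\bullet| \to |\sk_n Y_\bullet|$ are equivalences. Using the description of $L_n X$ as the colimit of the cubical diagram of subspaces $X_S \subseteq X_n$ indexed by proper subsets $S \subsetneq \{0,1,\ldots,n\}$ containing $0$, Reedy cofibrancy of $X_\bullet$ and $Y_\bullet$ makes all the structure maps in this cube cofibrations, so $L_n(-)$ computes a homotopy colimit on Reedy cofibrant objects. The inductive hypothesis applied to simplicial levels $<n$ then identifies $L_n f$ as a weak equivalence between homotopy colimits of levelwise equivalent diagrams, hence an equivalence. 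Now feeding this into the pushout square \eqref{latching_square}, the top-left corner involves $L_n X$, $X_n$, and $\partial\Delta^n_+$, the horizontal map is a cofibration between cofibrant objects (by part one and the pushout--product), and the three comparison maps to the corresponding $Y$-diagram are equivalences. The gluing lemma yields that the induced map on pushouts $|\sk_n f|$ is an equivalence.

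Finally, $|f|$ itself is the sequential colimit of the $|\sk_n f|$ along cofibrations between cofibrant objects, so it too is an equivalence. The main obstacle is arranging the argument to cover the $h$-cofibration case, where no ambient model structure is available: one must invoke the classical Str{\o}m--style gluing lemma for based spaces and its analogue for orthogonal spectra from \cite{mmss}. A secondary point requiring care is the identification of the latching object defined via degeneracies with the cubical colimit used to verify that $L_n(-)$ preserves levelwise equivalences between Reedy cofibrant diagrams; this is standard Reedy category theory and does not require new ideas.
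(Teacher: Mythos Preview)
Your approach is essentially the paper's: induct up the skeletal filtration using the latching pushout square, the pushout--product property, and the gluing lemma. The paper's proof sketch says exactly this.

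Where you and the paper diverge is in locating the genuine difficulty in the $h$-cofibration case. You assert that ``both $q$-cofibrations and $h$-cofibrations satisfy such a pushout--product property in based spaces and in orthogonal spectra, so a single argument covers both cases,'' and then flag the gluing lemma as the main obstacle. The paper takes the opposite view: the gluing lemma is routine, while the pushout--product statement for $h$-cofibrations is the hardest piece. Two points you are missing: first, for spaces the paper stresses that one must work with \emph{unbased} $h$-cofibrations, since the statement is actually false for based $h$-cofibrations unless everything is well-based; second, for orthogonal spectra the claim that an $h$-cofibration of spectra pushout--producted with a relative CW inclusion of spaces is again an $h$-cofibration is not obvious and does not appear in \cite{mmss}. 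The paper supplies this by appealing to the formal pairing theorem of Schw\"anzl and Vogt (\cite{schwanzl2002strong}, Cor.~2.9). So your outline is correct, but your assessment of where the work lies is off, and the specific reference you would need is Schw\"anzl--Vogt rather than \cite{mmss}.
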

\begin{proof}
For simplicial spaces, the proof is an induction up the cube-shaped diagram defining $L_n X$, using the usual pushout and pushout-product properties for cofibrations.
The use of unbased $h$-cofibrations was critical $-$ the theorem is not true with based $h$-cofibrations, unless all the spaces are well-based.

For orthogonal spectra and $q$-cofibrations the proof is largely the same.
For $h$-cofibrations of orthogonal spectra, the theorem is a little surprising since we do not assume any of the spectra involved are well-based.
The hardest piece of the proof is the statement that if $f: K \to L$ is a relative CW complex and $g: A \to X$ is an $h$-cofibration of orthogonal spectra, the pushout-product $f \square g$ is an $h$-cofibration.
This follows from the formal pairing result of Schwa\"nzl and Vogt (\cite{schwanzl2002strong}, Cor 2.9).
\end{proof}

When $X_\bullet$ is a cyclic space, the simplicial skeleton $|\sk_n X_\bullet|$ is of limited utility because it is not closed under the circle action.
So we draw motivation from \cite{berger2011extension} and make the following definitions.
Since it is important, we remark that here and elsewhere we work in the category of compactly generated, weak Hausdorff spaces.
\begin{df}
For $n \geq 0$ we define the $n$th \emph{cyclic skeleton} $\sk_n^\cyc X$ by restricting $X_\bullet$ to the subcategory of $\mathbf\Lambda^\op$ on the objects $0,\ldots,n$ and then taking a left Kan extension back. This may be re-expressed as the coequalizer
\[ \bigvee_{k,\ell \leq n} \mathbf\Lambda(\bullet,[k])_+ \sma \mathbf\Lambda([k],[\ell])_+ \sma X_\ell \rightrightarrows \bigvee_{k \leq n} \mathbf\Lambda(\bullet,[k])_+ \sma X_k \rightarrow \sk^\cyc_n X_\bullet \]
We take the $(-1)$st cyclic skeleton to be the space $X_{-1}$, defined as the equalizer of the degeneracy and extra degeneracy maps:
\[ \sk_{-1}^\cyc X = X_{-1} \rightarrow X_0 \rightrightarrows X_1 \]
\end{df}

\begin{df}
The $n$th \emph{cyclic latching object} $L_n^\cyc X \subset X_n$ is the closed subspace consisting of all points lying in the image of some degeneracy map
\[ s_i: X_{n-1} \ra X_n, \qquad 0 \leq i \leq n \]
The $0$th latching object is also taken to be $\sk_{-1}^\cyc X \subset X_0$ rather than being empty.
\end{df}

The only difference between $L_n X$ and $L_n^\cyc X$ is that the \emph{extra} degeneracy is included in $L_n^\cyc X$.
Equivalently, $L_n^\cyc X$ is the closure of $L_n X$ under the action of the cycle map $t_n$.
It follows that $|\sk_n^\cyc X_\bullet|$ is the closure of $|\sk_n X_\bullet|$ under the circle action.

We briefly prove an equivalent characterization of $L_n^\cyc X$.
Let $[n]$ denote the cycle category with $n+1$ objects from the definition of $\mathbf\Lambda$.
Each inclusion of a nonempty subset $S \subset \{0,\ldots,n\}$ gives a degree 1 functor $[n] \to [|S|-1]$ which rounds down to the nearest element of $S$.
By the cyclic structure of $X$, this gives a map $X_S := X_{|S|-1} \to X_n$.
If $S$ is empty then we define $X_S = X_{-1}$, and define $X_S \to X_n$ by including into $X_0$ and applying any composition of degeneracy maps $X_0 \to X_n$.
\begin{prop}\label{cyclic_latching_cube}
This forms a cube-shaped diagram of subspaces of $X_n$, indexed by the subsets of $\{0,\ldots,n\}$ and inclusions. Restricting to the proper subsets, the colimit of this diagram is $L_n^{\cyc} X$.
\end{prop}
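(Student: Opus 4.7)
The plan is to first build the cube, then show each arrow is a subspace inclusion, then match the union with $L_n^\cyc X$ and upgrade ``union'' to ``colimit.''

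For each nonempty $S \subseteq \{0,\ldots,n\}$ of size $k+1$, I would pick any cyclic enumeration of $S$ to obtain a ``round down to $S$'' morphism $\rho_S : [n] \to [k]$ in $\mathbf\Lambda$. Any two enumerations differ by precomposition with a cycle map $\tau_k$, so the image of $X_k \to X_n$ is independent of convention and gives a well-defined subspace $X_S \subseteq X_n$. For a nonempty inclusion $S \subseteq T$, compatible enumerations make ``round down to $S$'' factor as ``round down to $T$, then round down to $S$,'' yielding a map $X_S \to X_T$ commuting with the maps into $X_n$; checking that this is functorial in nested triples $S \subseteq T \subseteq U$ reduces to equating two compositions in $\mathbf\Lambda$. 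For $S = \emptyset$ I would set $X_\emptyset = X_{-1}$ and define $X_\emptyset \to X_S$ by any composition of (ordinary or extra) degeneracies; these all agree on $X_{-1}$ because the equalizer relation $s_0 = s_1$ on $X_{-1} \to X_1$ propagates to equalize all of the $\mathbf\Lambda$-morphisms $[k] \to [0]$ (any two of which differ by a single application of $s_0$ vs.\ $s_1$ after projecting to $[1]$).

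Next I would observe that every surjection $[n] \to [k]$ in $\mathbf\Lambda$ has a section in $\mathbf\Lambda$ (given by any monotone insertion of $[k]$ back into $[n]$ that lands in the fibers of $\rho_S$), so each $X_S \to X_n$ is a split injection, and hence a closed embedding in compactly generated weak Hausdorff spaces. The same argument shows each cube map $X_S \to X_T$ is a closed embedding. Now identify the union: if $|S| = n$, the morphism $\rho_S$ is either an ordinary degeneracy $s_i$ or, when $0 \notin S$, the extra degeneracy after a cycle, so the image of $X_S$ in $X_n$ is the image of one of the degeneracies $s_0,\ldots,s_n$ on $X_{n-1}$; conversely every such degeneracy image arises from exactly one codimension-one $S$. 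For $|S|<n$ we have $X_S \subseteq X_{S'}$ for any enlargement to $|S'| = n$, and $X_\emptyset \to X_n$ factors through any $X_S$ with $|S| = 1$. Hence $\bigcup_{S \subsetneq \{0,\ldots,n\}} X_S$ coincides with $L_n^\cyc X$ by its pointwise description.

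Finally, to promote ``union'' to ``colimit'' I would use that in compactly generated weak Hausdorff spaces the colimit of a finite cube of closed embeddings over its proper-subset part is computed as the union of the images inside any common target, provided the diagram is commutative on the nose inside that target $-$ which is exactly what was verified above. The main obstacle, in my view, is not any of the topology but the bookkeeping in the first step: making the round-down morphisms precise enough to verify functoriality of the whole cube, and in particular reconciling the case when $0 \notin S$ (where the extra-degeneracy conventions interact with the cycle maps) with the case when $0 \in S$ (where everything happens in $\mathbf\Delta$). Once this cyclic combinatorics is pinned down, the rest is formal.
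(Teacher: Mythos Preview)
Your construction of the cube and identification of the \emph{union} with $L_n^\cyc X$ are fine, but the last step contains a real gap. The assertion that ``in compactly generated weak Hausdorff spaces the colimit of a finite cube of closed embeddings over its proper-subset part is computed as the union of the images inside any common target, provided the diagram is commutative'' is false without an additional hypothesis. Already for $n=1$ the punctured cube is the span $X_{\{0\}} \leftarrow X_\emptyset \rightarrow X_{\{1\}}$, and its pushout maps to the union $X_{\{0\}} \cup X_{\{1\}} \subset X_1$ injectively only when $X_{\{0\}} \cap X_{\{1\}} = X_\emptyset$. In general one needs $X_S \cap X_T = X_{S \cap T}$ for all $S,T$; commutativity of the diagram and closedness of the embeddings do not imply this.

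Proving this intersection formula is exactly the substantive content of the paper's argument, and it is not just bookkeeping: the paper introduces for each $i$ the idempotent $D_i: X_n \to X_n$ induced by the functor $[n] \to [n]$ collapsing $i$ onto $i-1$, and shows that $X_S$ is precisely the common fixed locus of $\{D_i : i \notin S\}$. From this description $X_S \cap X_T = X_{S \cap T}$ is immediate. The case $S = \emptyset$ requires a separate short argument to see that anything fixed by all $D_i$ actually lies in $X_{-1}$. Your outline never touches this point; the ``cyclic combinatorics'' you flag as the main obstacle is comparatively routine, while the step you treat as formal is where the work lies.
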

\begin{proof}
If $n < 1$ then this is easy, so we assume $n \geq 1$. It is straightforward to check that our rule respects inclusions of subsets. Each edge of the cube is a standard degeneracy map, which is split by some face map. Since we are working in weak Hausdorff spaces, this implies that each $X_S$ is a closed subspace of $X_n$. To prove that their colimit is equal to their union, it suffices to check $X_S \cap X_T = X_{S \cap T}$. This reduces to the following claim: for each $0 \leq i \leq n$, let $D_i: X_n \to X_n$ be the map induced by the functor $[n] \to [n]$ that sends $i$ to $i-1$ and fixes all other points. Then $X_S$ is precisely the subspace that is fixed by $D_i$ for every $i$ in the complement of $S$.

To prove this when $S$ is nonempty, observe there is a natural projection map $d_S: X_n \to X_S$ induced by the inclusion of $S$ into $[n]$.
Thinking of this as a map $X_n \to X_n$, $X_S$ is precisely the subspace fixed by this projection.
On the other hand, we may write the complement of $S$ as some cyclically ordered set $\{m_1,\ldots,m_k\}$, arranged so that $m_k + 1 \in S$, and then we have the identity
\[ d_S = d_{\{m_k\}^c} \ldots d_{\{m_1\}^c} = D_{m_k} \ldots D_{m_1} \]
Therefore being in $X_S$ is equivalent to being fixed by $D_i$ for all $i \in S^c$.

If $S$ is empty, then $X_\emptyset = X_{-1}$ is contained in every $X_0$ and so is fixed by all the projections $D_i$.
Conversely, anything fixed by all the projections is in every subspace of the form $X_{\{s\}} \cong X_0$.
In particular it lies in $X_{\{0\}}$ and $X_{\{1\}}$.
This gives two points $x_0,x_1 \in X_0$ whose images under the two degeneracy maps are the same point $x \in X_1$.
But each face map splits both degeneracy maps, so $x_0 = x_1$ and this point of $X_0$ lies in the subspace $X_{-1}$.
\end{proof}

Now we give the analogue of the standard pushout square (\ref{latching_square}). We expect this is known, but have not found a reference.

\begin{prop}\label{cyclic_latching_square}
For each $n \geq 0$, there is a natural pushout square of $S^1$-spaces
\begin{equation}
\xymatrix{
L_n^\cyc X \times_{C_{n+1}} \Lambda^n \cup_{L_n^\cyc X \times \partial\Lambda^n} X_n \times_{C_{n+1}} \partial\Lambda^n \ar[r] \ar[d] & X_n \times_{C_{n+1}} \Lambda^n \ar[d] \\
|\sk_{n-1}^\cyc X_\bullet| \ar[r] & |\sk_n^\cyc X_\bullet| }
\end{equation}
for each unbased cyclic space $X_\bullet$, and the obvious variant with smash products when $X_\bullet$ is a based cyclic space.
\end{prop}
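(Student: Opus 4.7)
The plan is to adapt the standard proof of the simplicial pushout square (\ref{latching_square}) to the cyclic setting; the main new feature is that each top-dimensional cell of shape $\Lambda^n$ carries a $C_{n+1}$-symmetry generated by the cyclic automorphism $\tau_n$, and this must be matched against the cycle action on $X_n$. Writing
\[ |\sk_n^\cyc X_\bullet| = \int^{[k] \in \mathbf\Lambda_{\leq n}} \Lambda^k \sma X_k \]
as a coend over the full subcategory $\mathbf\Lambda_{\leq n} \subseteq \mathbf\Lambda$ spanned by $[0], \ldots, [n]$, the passage from the $(n-1)$-st to the $n$-th cyclic skeleton corresponds to incorporating the object $[n]$ and its new morphisms.

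The key input is the generalized Reedy factorization in $\mathbf\Lambda$: every morphism factors essentially uniquely as a surjection followed by an automorphism followed by an injection, where the automorphism group of $[n]$ is $C_{n+1}$ generated by $\tau_n$. Applying this to morphisms with target $[n]$, the new contribution to $|\sk_n^\cyc X|$ is organized by (a) a non-degenerate point of $X_n$, meaning one outside $L_n^\cyc X$, together with (b) a non-degenerate injection $[k] \hookrightarrow [n]$, modulo the diagonal $C_{n+1}$-action coming from the cycle automorphism on both sides. Passing to geometric realization, the non-degenerate injections assemble into $\Lambda^n - \partial\Lambda^n$, where $\partial\Lambda^n$ is the closed subspace generated by the face inclusions $\Lambda[n-1] \hookrightarrow \Lambda[n]$.

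With this description in hand, I would verify the pushout property directly: as a set, $|\sk_n^\cyc X_\bullet|$ is the disjoint union of $|\sk_{n-1}^\cyc X_\bullet|$ and $(X_n - L_n^\cyc X) \times_{C_{n+1}} (\Lambda^n - \partial\Lambda^n)$. Combining this with the fact that the vertical maps are closed inclusions of $S^1$-spaces in the compactly generated weak Hausdorff topology (as exploited in the proof of Proposition \ref{cyclic_latching_cube}), the universal property of the pushout follows. The $S^1$-equivariance is automatic because the $S^1$-action on each $\Lambda^k$ commutes with every morphism of $\mathbf\Lambda$ by the construction $\Lambda^k \cong S^1 \times \Delta^k$ from Section \ref{cyclic_basics}.

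The main obstacle is the combinatorial bookkeeping in the Reedy factorization, specifically checking that the cube description of $L_n^\cyc X$ from Proposition \ref{cyclic_latching_cube}, combined with an analogous cube description of $\partial\Lambda^n \subseteq \Lambda^n$ in terms of face inclusions, correctly enumerates all the ways an $n$-cell can fail to be genuinely new. Once this is set up, the identification with the pushout corner $L_n^\cyc X \times_{C_{n+1}} \Lambda^n \cup_{L_n^\cyc X \times_{C_{n+1}} \partial\Lambda^n} X_n \times_{C_{n+1}} \partial\Lambda^n$ is formal, and the based variant for based cyclic spaces follows by collapsing basepoints throughout.
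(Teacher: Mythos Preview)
Your approach via the generalized Reedy factorization in $\mathbf\Lambda$ (surjection--automorphism--injection, with $\textup{Aut}([n]) = C_{n+1}$) is sound and is essentially the Berger--Moerdijk framework the paper cites in \cite{berger2011extension}. The paper takes a different route: rather than working with a general $X_\bullet$, it observes that both sides commute with colimits in $X_\bullet$ and so reduces to the representables $X_\bullet = \Lambda[m]$. On these, one can explicitly strike out the degenerate summands and identify what remains with the ordinary simplicial pushout square for $\Delta[m]$ multiplied by $S^1$. Your approach is more conceptual and makes the role of the $C_{n+1}$-symmetry transparent; the paper's approach is more elementary and reduces the topological point-set issues to the known simplicial case.

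One correction: you assert that ``the vertical maps are closed inclusions,'' but in the displayed square the vertical maps are quotient maps, not inclusions. What you actually need is that the top horizontal map is a closed inclusion and that the right vertical quotient map identifies exactly the expected points; the set-level decomposition you describe then gives the pushout in compactly generated spaces. This is not a fatal gap, but it is precisely the kind of point-set bookkeeping that the paper's reduction to representables is designed to avoid. If you stick with your approach, you should instead argue that the top horizontal is a closed inclusion (a pushout-product of closed inclusions, after passing to $C_{n+1}$-orbits) and that the bottom horizontal is its pushout along the canonical quotient.
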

\begin{proof}
The square is clearly defined and natural, and the top horizontal map is the inclusion of a subspace.
We treat the case $n = 0$ separately, where the square becomes
\[ \xymatrix{
(L_0^\cyc X \times S^1) \amalg \emptyset \ar[r] \ar[d] & X_0 \times S^1 \ar[d] \\
L_0^\cyc X \ar[r] & |\sk_0^\cyc X_\bullet| } \]
which is easily checked to be a pushout.
For $n \geq 1$, it suffices to check that it is a pushout when $X_\bullet = \Lambda(\bullet,[m])$ is the standard cyclic $m$-simplex.
The square may be rewritten
\[ \xymatrix{
(L_n^\cyc \Lambda[m] \times_{C_{n+1}} \Lambda^n) \amalg (\Lambda_n[m] - L_n^\cyc \Lambda[m]) \times_{C_{n+1}} \partial\Lambda^n \ar[r] \ar[d] & \Lambda_n[m] \times_{C_{n+1}} \Lambda^n \ar[d] \\
|\sk_{n-1}^\cyc \Lambda[m]| \ar[r] & |\sk_n^\cyc \Lambda[m]| } \]
The top map is a disjoint union of some isomorphisms and some nontrivial inclusions. We strike out the isomorphisms without changing whether the square is a pushout:
\[ \xymatrix{
(\Lambda_n[m] - L_n^\cyc \Lambda[m]) \times_{C_{n+1}} \partial\Lambda^n \ar[r] \ar[d] & (\Lambda_n[m] - L_n^\cyc \Lambda[m]) \times_{C_{n+1}} \Lambda^n \ar[d] \\
|\sk_{n-1}^\cyc \Lambda[m]| \ar[r] & |\sk_n^\cyc \Lambda[m]| } \]
The complement of the latching object $L_n^\cyc \Lambda[m]$ consists of maps in $\mathbf\Lambda([n],[m])$ for which the $n+1$ points $0,\ldots,n$ go to distinct points in $0,\ldots,m$.
The $C_{n+1}$-action on these maps is free and each orbit has a unique representative that comes from $\mathbf\Delta([n],[m])$, so we can again simplify the square to
\[ \xymatrix{
(\Delta_n[m] - L_n \Delta[m]) \times \partial\Lambda^n \ar[r] \ar[d] & (\Delta_n[m] - L_n \Delta[m]) \times \Lambda^n \ar[d] \\
|\sk_{n-1}^\cyc \Lambda[m]| \ar[r] & |\sk_n^\cyc \Lambda[m]| } \]
Now one may identify this square as the standard simplicial pushout square for $\Delta[m]$, multiplied by the identity map on $S^1$.
Alternatively, one can enumerate the cells of $|\sk_n^\cyc \Lambda[m]|$ missing from $|\sk_{n-1}^\cyc \Lambda[m]|$ and check that the above map precisely attaches those cells.
So the square is a pushout and the proof is complete.
\end{proof}

As a result, Proposition \ref{reedy_cofibrant_spectra} applies to cyclic spectra whose cyclic latching maps are cofibrations, including the ``$(-1)$st latching map'' $* \to X_{-1}$. One can even check that being Reedy cofibrant in the cyclic sense is stronger than being Reedy cofibrant in the ordinary sense.

We will need to know when $|X_\bullet|$ is a cofibrant as a space with an $S^1$ action:
\begin{df}
If $G$ is a topological group, a map $X \rightarrow Y$ of based $G$-spaces is a \emph{cofibration} if it is a retract of a relative cell complex built out of cells of the form
\[ (G/H \times \partial D^n)_+ \hookrightarrow (G/H \times D^n)_+ \]
with $n \geq 0$ and $H \leq G$ any closed subgroup.
\end{df}
% The version of this result for spectra is Prop \ref{reedy_cof_implies_realization_is_cof_s1_spectra} below.
\begin{prop}\label{reedy_cof_implies_realization_is_cof_s1_spaces}
If $X_\bullet$ is a cyclic space, $X_{-1}$ is a cofibrant space, and each cyclic latching map $L_n^\cyc X \to X_n$ is a cofibration of $C_{n+1}$-spaces, then $|X_\bullet|$ is a cofibrant $S^1$-space.
\end{prop}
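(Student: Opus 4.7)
My approach is to filter $|X_\bullet|$ by its cyclic skeleta and show inductively that each inclusion $|\sk_{n-1}^\cyc X_\bullet| \hookrightarrow |\sk_n^\cyc X_\bullet|$ is a cofibration of $S^1$-spaces, beginning from a cofibrant base $|\sk_{-1}^\cyc X_\bullet| = X_{-1}$. Since $|X_\bullet| = \colim_n |\sk_n^\cyc X_\bullet|$ is a sequential colimit along such cofibrations starting from a cofibrant object, $|X_\bullet|$ will then itself be a cofibrant $S^1$-space.

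For the base, $X_{-1}$ carries the trivial $S^1$-action, so any ordinary cell $\partial D^k \hookrightarrow D^k$ in a chosen cell structure is equivariantly the cell $(S^1/S^1 \times \partial D^k)_+ \hookrightarrow (S^1/S^1 \times D^k)_+$; hence cofibrancy of $X_{-1}$ as a space upgrades for free to cofibrancy as an $S^1$-space.

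For the inductive step, Proposition \ref{cyclic_latching_square} presents $|\sk_n^\cyc X_\bullet|$ as a pushout of $S^1$-spaces along its top horizontal map. Since $S^1$-cofibrations are closed under pushout, I reduce to showing that this map $-$ the balanced-over-$C_{n+1}$ pushout-product of the cyclic latching map $L_n^\cyc X \hookrightarrow X_n$ with the boundary inclusion $\partial\Lambda^n \hookrightarrow \Lambda^n$ $-$ is a cofibration of $S^1$-spaces. Using the homeomorphism $\Lambda^n \cong S^1 \times \Delta^n$ from section \ref{cyclic_basics}, $(\Lambda^n,\partial\Lambda^n)$ is an $(S^1\times C_{n+1})$-equivariant relative CW-pair on which $C_{n+1}$ acts freely (rotating $S^1$ through $2\pi/(n+1)$ and cyclically permuting the vertices of $\Delta^n$), while $S^1$ acts by translation on the first factor. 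Combined with the hypothesis that the latching map is a $C_{n+1}$-cofibration, this lets me exhibit an explicit $S^1$-cell structure on each balanced pushout-product cell via the double-coset decomposition.

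The main obstacle will be the isotropy bookkeeping in the balanced pushout-product: after quotienting by the free diagonal $C_{n+1}$-action, one needs to verify that the stabilizers appearing on the new $S^1$-cells are still closed (necessarily finite cyclic) subgroups of $S^1$, so that the cell attachments fit the definition of an $S^1$-cofibration given just before the proposition. This amounts to tracking the diagonal subgroup arising from each product of a $C_{n+1}$-cell $(C_{n+1}/H \times D^k)_+$ in $X_n$ with an $(S^1\times C_{n+1})$-cell in $\Lambda^n$, and it follows from the freeness of $C_{n+1}$ on $\Lambda^n$; once this is in place, the induction closes and the sequential-colimit argument of the first paragraph finishes the proof.
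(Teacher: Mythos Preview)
Your proposal is correct and follows essentially the same route as the paper: filter by cyclic skeleta, handle the base $X_{-1}$ via the trivial action, and for the inductive step use the pushout square of Proposition~\ref{cyclic_latching_square} to reduce to showing the top horizontal (a $C_{n+1}$-balanced pushout-product of the latching map with $\partial\Lambda^n \to \Lambda^n$) is an $S^1$-cofibration. The paper's execution of that last step is slightly cleaner than what you sketch: rather than invoking a double-coset decomposition and tracking isotropy abstractly, it simply observes that $\partial\Lambda^n \to \Lambda^n$ is a \emph{free} $S^1$-cofibration (built from cells $S^1 \times \partial D^\ell \to S^1 \times D^\ell$), reduces to a pushout-product of generating cells, uses associativity of $\square$ to collapse the disc factors, and then computes directly that $[(C_{n+1}/C_r \times S^1)_+]_{C_{n+1}} \cong (S^1/C_r)_+$, landing on a generating $S^1$-cofibration.
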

\begin{proof}
It suffices to show that each map of cyclic skeleta
\[ |\sk_{n-1}^\cyc X| \ra |\sk_n^\cyc X| \]
is an $S^1$-cofibration.
The $(-1)$-skeleton is already assumed to be cofibrant, and it has trivial $S^1$-action, so it is also $S^1$-cofibrant.
For the induction we use the square from Prop. \ref{cyclic_latching_square}:
\begin{equation*}
\xymatrix{
L_n^\cyc X \times_{C_{n+1}} \Lambda^n \cup_{L_n^\cyc X \times \partial\Lambda^n} X_n \times_{C_{n+1}} \partial\Lambda^n \ar[r] \ar[d] & X_n \times_{C_{n+1}} \Lambda^n \ar[d] \\
|\sk_{n-1}^\cyc X_\bullet| \ar[r] & |\sk_n^\cyc X_\bullet| }
\end{equation*}
It suffices to prove that the top horizontal is an $S^1$-cofibration.
Since $L_n^\cyc X \rightarrow X$ is a $C_{n+1}$-cofibration, and $\partial \Lambda^n \rightarrow \Lambda^n$ is a free $S^1$-cofibration, this reduces to proving that the $C_{n+1}$ orbits of the simpler pushout-product
\[ [ (C_{n+1}/C_r \times \partial D^k \rightarrow C_{n+1}/C_r \times D^k)_+ \square (S^1 \times \partial D^\ell \rightarrow S^1 \times D^\ell)_+ ]_{C_{n+1}} \]
is an $S^1$-cofibration.
By associativity of the pushout-product we rewrite this as
\[ [ (C_{n+1}/C_r \times S^1)_+ \sma (\partial D^{k + \ell} \rightarrow D^{k + \ell})_+ ]_{C_{n+1}} \]
which simplifies to
\[ (S^1/C_r)_+ \sma (\partial D^{k + \ell} \rightarrow D^{k + \ell})_+ \]
and this is one of the generating $S^1$-cofibrations.
\end{proof}

\subsection{Fixed points and subdivision.}

We turn our attention to the fixed points $|X_\bullet|^{C_r}$, where $C_r \leq S^1$ is the cyclic subgroup of order $r$.
The $C_r$-fixed points have an action of $S^1/C_r$, which we usually regard as an $S^1$-action by pulling back along the group isomorphism
\[ \rho_r: S^1 \congar S^1/C_r \]
We will recall the standard result that the $C_r$-fixed points of $|X_\bullet|$ are built from the spaces $X_{rk-1}^{C_r}$ for $k \geq 1$. 
One applies a subdivision functor to $X_\bullet$ to obtain a new simplicial space $\sd_r X_\bullet$, whose realization is homeomorphic to $|X_\bullet|$, but with simplicial $C_r$ action, giving a homeomorphism
\[ |X_\bullet|^{C_r} \cong |(\sd_r X_\bullet)^{C_r}| \]
In fact, one may even put $S^1$ actions on everything in sight, and the relevant maps are all equivariant.
We recall the precise definitions and theorems below.

\begin{df}\cite{bhm}
The \emph{$r$-fold edgewise subdivision functor} a map of categories $\mathbf\Delta \overset{\sd_r}\to \mathbf\Delta$ which takes $[k-1]$ to $[rk-1]$.
Each order-preserving map $[m-1] \to [n-1]$ is repeated $r$ times to give a map $[rm-1] \to [rn-1]$.
Given a simplicial space $X$, we let the \emph{$r$-fold edgewise subdivision} $\sd_r X$ denote the simplicial space obtained by composing with $\sd_r$.
\end{df}

\begin{df}
The \emph{$r$-cyclic category} $\mathbf\Lambda_r$ is the subcategory of $\mathbf\Lambda$ on the objects of the form $[rk-1]$, $k \geq 1$, generated by all maps in the image of $\sd_r: \mathbf\Delta \to \mathbf\Delta$ in addition to the cycle maps.
When working in $\mathbf\Lambda_r$ we relabel the object $[rk-1]$ as $[k-1]$. Equivalently, \mbox{$\mathbf\Lambda_r([k-1],[n-1])$} consists of all non-decreasing functions $f: \Z \to \Z$ such that $f(x+k) = f(x) + n$, up to the equivalence relation $f \sim f + rn$.
\end{df}

\begin{prop}
If $X_\bullet$ is a cyclic space, its $r$-fold subdivision $\sd_r X_\bullet$ is naturally an $r$-cyclic object in $C_r$-spaces. The $C_r$-action is generated by $t_{rn-1}^n$ at simplicial level $n-1$.
\end{prop}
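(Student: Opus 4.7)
The plan is to obtain the $r$-cyclic structure from the inclusion $\iota\colon \mathbf\Lambda_r \hookrightarrow \mathbf\Lambda$ and then check that the prescribed $C_r$-action is compatible with it. By definition, $\mathbf\Lambda_r$ is a subcategory of $\mathbf\Lambda$ on the objects $[rk-1]$ (relabeled as $[k-1]$), so the restriction $X_\bullet \circ \iota^{\op}$ is tautologically an $r$-cyclic space. Its underlying simplicial space is $[k-1] \mapsto X_{rk-1}$ with face and degeneracy maps coming from the image of $\sd_r\colon \mathbf\Delta \to \mathbf\Delta$, which is exactly $\sd_r X_\bullet$. So the $r$-cyclic structure on $\sd_r X_\bullet$ comes for free.

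For the $C_r$-action, I would define the generator at level $n-1$ to be the self-map $X(t_{rn-1}^n)\colon X_{rn-1} \to X_{rn-1}$. Since $t_{rn-1}^{rn} = \id$ in $\mathbf\Lambda$, its $r$th power is the identity, giving an honest $C_r$-action on each level.

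The main step---and really the only content of the proposition---is to show this action is natural in the $r$-cyclic variable, so that $\sd_r X_\bullet$ lands in $r$-cyclic objects of $C_r$-spaces. Concretely, for each morphism $f \in \mathbf\Lambda_r([k-1],[n-1]) \subseteq \mathbf\Lambda([rk-1],[rn-1])$ I would verify the identity $t_{rn-1}^n \circ f = f \circ t_{rk-1}^k$ in $\mathbf\Lambda$. Using the standard description of $\mathbf\Lambda$-morphisms as equivalence classes of non-decreasing functions $\tilde f\colon \Z\to\Z$, the cycle powers $t_{rk-1}^k$ and $t_{rn-1}^n$ act by translation by $k$ on the source and $n$ on the target, so the desired equation reduces to $\tilde f(x+k) = \tilde f(x) + n$---which is precisely the defining periodicity of $\mathbf\Lambda_r$ highlighted in the equivalent description above. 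The ``main obstacle'' is really just bookkeeping around the relabeling $[k-1] \equiv [rk-1]$, as one must keep the exponents in $t_{rn-1}^n$ and in the periodicity $\tilde f(x+k)=\tilde f(x)+n$ lined up correctly; once that is straight, the commutation is built into the definition of $\mathbf\Lambda_r$ and the proposition follows.
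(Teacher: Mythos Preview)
Your argument is correct. The paper does not actually supply a proof for this proposition; it is stated as standard material (the introduction to Section~\ref{sec:cyclic} notes that the section is drawn from \cite{dhk}, \cite{jones1987cyclic}, \cite{bhm}, \cite{madsen_survey}, with fuller proofs deferred to the author's thesis). So there is nothing in the paper to compare against beyond the bare statement.

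Your approach is the natural one: the $r$-cyclic structure is obtained by restriction along $\iota\colon \mathbf\Lambda_r \hookrightarrow \mathbf\Lambda$, and the commutation of the $C_r$-action with the $r$-cyclic structure maps reduces to the identity $\tau_{rn-1}^n \circ f = f \circ \tau_{rk-1}^k$ in $\mathbf\Lambda$, which in the integer-function model is exactly the periodicity $\tilde f(x+k) = \tilde f(x)+n$ defining $\mathbf\Lambda_r$. One small remark on presentation: the $C_r$-action you describe is not additional data but is already contained in the $r$-cyclic structure itself, since $\tau_{rn-1}^n$ lies in $\mathbf\Lambda_r([n-1],[n-1])$. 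So rather than defining the action separately and then checking compatibility, you could simply observe that the automorphism group of $[n-1]$ in $\mathbf\Lambda_r$ is cyclic of order $rn$, with the subgroup $C_r$ generated by $\tau_{rn-1}^n$ automatically central (indeed, it is the subgroup that becomes trivial under $P_r$). Either way the content is the same, and your verification via the periodicity condition is exactly what is needed.
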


\begin{prop}[\cite{bhm}, 1.1]\label{prop:cyclic_diagonal}
There is a natural diagonal homeomorphism
\[ |\sd_r X_\bullet| \overset{D_r}\ra |X_\bullet| \]
which sends each $(k-1)$-simplex in $X_{rk-1}$ to the corresponding $(rk-1)$-simplex in $X_{rk-1}$ by the diagonal
\[ (u_0,\ldots,u_{k-1}) \mapsto \left(\frac1r u_0,\ldots,\frac1r u_{k-1},\frac1r u_0,\ldots,\frac1r u_{k-1},\ldots \right) \]
\end{prop}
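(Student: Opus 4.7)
The plan is to define $D_r$ on the canonical presentation of $|\sd_r X_\bullet|$ by applying the diagonal embedding $\delta_r^k: \Delta^{k-1} \hookrightarrow \Delta^{rk-1}$ on the simplex factor at each level, then verify well-definedness, naturality, and homeomorphism (reducing the last of these to the case of a standard simplex).

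First I would verify the combinatorial identity
\[
(\sd_r f)_* \circ \delta_r^m \;=\; \delta_r^n \circ f_*
\]
for each $f \in \mathbf\Delta([m-1],[n-1])$, which ensures that $D_r$ descends from the canonical presentation to the realization. The verification is a direct computation: at the $(j+bn)$-th coordinate ($0 \leq j < n$, $0 \leq b < r$) both sides equal $\tfrac1r (f_* u)_j$, since $\sd_r f$ sends $i+am$ to $f(i)+an$, and so the preimage of $j+bn$ under $\sd_r f$ consists precisely of the indices $i+bm$ with $f(i) = j$. Continuity of $D_r$ is immediate from the coequalizer topology on $|\sd_r X_\bullet|$, and naturality in $X_\bullet$ is clear from the definition.

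To show $D_r$ is a homeomorphism I would reduce to the case $X_\bullet = \Delta[n-1]$ via a colimit argument. The functors $|\sd_r(-)|$ and $|-|$ both preserve colimits of simplicial (based) spaces, and $D_r$ is a natural transformation between them; since every simplicial space is a canonical colimit of representables smashed with spaces, it suffices to check that $D_r$ is a homeomorphism on each $\Delta[n-1]$. In that case $|X_\bullet| = \Delta^{n-1}$, and $|\sd_r X_\bullet|$ is by construction the realization of the $r$-fold edgewise subdivision of $\Delta[n-1]$; the map $D_r$ is the standard piecewise-linear homeomorphism between this subdivided complex and $\Delta^{n-1}$.

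The main obstacle will be the representable case, which is essentially a combinatorial-geometric calculation: one must enumerate the non-degenerate simplices of $\sd_r \Delta[n-1]$, identify them with the cells of the edgewise subdivision of $\Delta^{n-1}$, and check that the diagonal formula sends each cell homeomorphically onto the corresponding subregion. This can be done for small cases such as $r = n = 2$ directly (where $|\sd_2 \Delta[1]|$ is the interval split into two edges at the midpoint and $D_2$ is the identification with $[0,1]$) and in general by induction, the key geometric input being that the points $(\tfrac1r u_0,\ldots,\tfrac1r u_{k-1},\tfrac1r u_0,\ldots) \in \Delta^{rk-1}$, together with their images under face maps in $\mathbf\Delta([rk-1],[n-1])$, tile the full simplex $\Delta^{n-1}$.
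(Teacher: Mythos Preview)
The paper does not actually prove this proposition: it is stated with a citation to \cite{bhm}, 1.1, and no argument is given. So there is no in-paper proof to compare against; your proposal is being weighed against the standard argument from that reference.

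Your outline is essentially that standard argument and is correct. The compatibility identity $(\sd_r f)_* \circ \delta_r^m = \delta_r^n \circ f_*$ is exactly what is needed for $D_r$ to descend to the coequalizer, and your coordinate check is right. The colimit reduction to representables is valid since both $|{-}|$ and $|\sd_r({-})|$ are left adjoints (hence colimit-preserving) and $D_r$ is natural.

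The only place your write-up is thin is the representable case, which you flag yourself. The cleanest way to finish it is not an induction but a direct enumeration: the nondegenerate $(n-1)$-simplices of $\sd_r \Delta[n-1]$ are the injective maps $[n-1] \to [rn-1]$ in the image of $\sd_r$-degeneracies composed with face maps, and one checks that under the diagonal formula their images are exactly the simplices of the standard $r$-fold edgewise subdivision of $\Delta^{n-1}$ (equivalently, the regions cut out by the hyperplanes $u_0 + \cdots + u_{i} = j/r$). Since these tile $\Delta^{n-1}$ and $D_r$ is affine on each, bijectivity follows. Alternatively, one can show $D_r$ is a continuous bijection of compact Hausdorff spaces directly, which avoids the cell enumeration.
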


\begin{thm}[\cite{bhm}, 1.6-1.8,1.11]
The realization of any $r$-cyclic space carries a natural $S^1$-action.
The generator of the subgroup $C_r \leq S^1$ acts by the simplicial map $t_{rn-1}^n$.
If $X_\bullet$ is a cyclic space, the diagonal homeomorphism $D_r$ is $S^1$-equivariant.
\end{thm}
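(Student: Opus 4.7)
The plan is to mirror, step for step, the argument given above for cyclic spaces, and then to verify $S^1$-equivariance of $D_r$ by reduction to the universal representable case. For the first two assertions, any $r$-cyclic space $Y_\bullet$ is the colimit of its representables $\Lambda_r[k-1] := \mathbf\Lambda_r(-,[k-1])$, so it suffices to construct a natural $S^1$-action on each $|\Lambda_r[k-1]|$ commuting with the $\mathbf\Lambda_r$-action. In parallel with the identification $\Lambda^n \cong S^1 \times \Delta^n$, I would produce a homeomorphism expressing $|\Lambda_r[k-1]|$ as a product of $S^1$ with a finite-dimensional polyhedron, with $S^1$ acting by translation on the first factor. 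Compatibility with $\mathbf\Lambda_r$ is then a check on generators: the morphisms in the image of $\sd_r : \mathbf\Delta \to \mathbf\Delta$ preserve the circle coordinate, and the cycle maps translate it. Unpacking the combinatorics identifies translation by $1/r$ with the simplicial operator $t_{rn-1}^n$ at level $n-1$.

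For the equivariance of $D_r$, my plan is to use naturality of all three ingredients (the $S^1$-action on $|X_\bullet|$, the $r$-cyclic $S^1$-action on $|\sd_r X_\bullet|$, and $D_r$ itself) in the cyclic space $X_\bullet$, reducing the claim to the universal case $X_\bullet = \Lambda[m]$. In that case, both sides admit the explicit model from the previous paragraph, and the formula of Proposition \ref{prop:cyclic_diagonal} realizes $D_r$ as an affine map that manifestly intertwines the two translation actions on their respective $S^1$-factors. A cleaner conceptual variant is to observe that the pullback of the $S^1$-action on $|\Lambda[m]|$ through $D_r$ is a continuous $S^1$-action on $|\sd_r \Lambda[m]|$ whose $C_r$-subgroup is again generated by $t_{rn-1}^n$ at level $n-1$ (since rotation by $1/r$ on $|\Lambda[m]|$ shifts the $rn$ cyclic coordinates by $n$); the uniqueness of the $S^1$-action extending this combinatorial data then forces it to coincide with the $r$-cyclic action.

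The main obstacle I anticipate is the combinatorial identification of $|\Lambda_r[k-1]|$ and the verification that the translation $S^1$-action commutes with all of $\mathbf\Lambda_r$. Unlike the cyclic case, where morphisms are generically determined by their underlying maps of finite cyclic sets, a morphism in $\mathbf\Lambda_r([k-1],[n-1])$ is only determined up to the coarser equivalence $f \sim f+rn$, so an additional rotation count must be tracked throughout. This same subtlety reappears in the check that $D_r$ is equivariant on representables, where the correct matching of rotations in source and target is what ensures $C_r$-equivariance at each simplicial level and, together with the uniqueness argument above, full $S^1$-equivariance.
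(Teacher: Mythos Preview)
The paper does not actually prove this theorem: it is stated as a citation of \cite{bhm}, 1.6--1.8, 1.11, with no proof given (the introduction to section~\ref{sec:cyclic} explicitly defers complete proofs to the author's thesis). So there is no ``paper's own proof'' to compare against; the relevant comparison is to the standard argument in \cite{bhm} and to the paper's brief proof of the analogous cyclic statement.

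Your approach---reduce to representables $\Lambda_r[k-1]$, identify their realizations as $S^1 \times \Delta^{k-1}$ with $S^1$ acting by translation, then check compatibility with the $\mathbf\Lambda_r$-structure on generators---is exactly the standard one, and it directly mirrors the paper's sketch for the cyclic case. The reduction of $D_r$-equivariance to $X_\bullet = \Lambda[m]$ via naturality is also correct and standard. One caution: your ``cleaner conceptual variant'' invoking \emph{uniqueness} of an $S^1$-action extending given $C_r$-data is not justified as stated; there is no general uniqueness principle of that form, so you should stick with the direct verification using the explicit affine formula for $D_r$ on $\Lambda^m \cong S^1 \times \Delta^m$. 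The combinatorial bookkeeping you flag (tracking the coarser equivalence $f \sim f + rn$ in $\mathbf\Lambda_r$) is real but routine once the coordinates on $|\Lambda_r[k-1]|$ are in hand.
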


Now that we can freely replace $|X_\bullet|$ with $|\sd_r X_\bullet|$ as an $S^1$ space, we see that the $C_r$-fixed points can be built from  the levelwise fixed points $(\sd_r X_\bullet)^{C_r}$. These levelwise fixed points are \emph{a priori} an $r$-cyclic space, but they are actually a cyclic space because they factor through the following quotient functor.

\begin{df}
The quotient functor
\[ P_r: \mathbf\Lambda_r([m-1],[n-1]) \ra \mathbf\Lambda([m-1],[n-1]) \]
takes a function $f: \Z \to \Z$ up to $f \sim f + rn$ and mods out by the stronger equivalence relation $f \sim f + n$.
\end{df}

We always consider $(\sd_r X_\bullet)^{C_r}$ to be a cyclic space, reserving the notation $P_r (\sd_r X_\bullet)^{C_r}$ for the corresponding $r$-cyclic space. With these conventions, the isomorphism between $|X_\bullet|^{C_r}$ and $|(\sd_r X_\bullet)^{C_r}|$ is $S^1$-equivariant:

\begin{prop}[\cite{bhm}, 1.10-1.12]\label{cyclic_fixed_points}\label{cyclic_fixed_points_compatibility}
The passage between cyclic and $r$-cyclic structures on $\sd_r X_\bullet$ and $(\sd_r X_\bullet)^{C_r}$, together with the diagonal of Proposition \ref{prop:cyclic_diagonal}, give natural $S^1$-equivariant homeomorphisms
\[ |(\sd_r X_\bullet)^{C_r}| \cong \rho_r^*|P_r (\sd_r X_\bullet^{C_r})| \cong \rho_r^*(|\sd_r X_\bullet|^{C_r}) \overset{D_r}\ra \rho_r^*(|X_\bullet|^{C_r}) \]
making the following triangle commute:
\[ \xymatrix{
|(\sd_{rs} X_\bullet)^{C_{rs}}| \ar[d]^\cong \ar[rd]^\cong & \\
\rho_r^* |(\sd_{s} X_\bullet)^{C_s}|^{C_r} \ar[r]^\cong & \rho_{rs}^* |X_\bullet|^{C_{rs}} } \]
\end{prop}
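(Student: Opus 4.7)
My plan is to establish the proposition by verifying each of the three homeomorphisms separately and then checking the compatibility triangle. The strategy throughout is to reduce everything to two already-known facts: the $S^1$-equivariant diagonal homeomorphism $D_r$ from Proposition \ref{prop:cyclic_diagonal}, and the fact that in compactly generated weak Hausdorff spaces, geometric realization of simplicial spaces commutes with finite-group fixed points when those fixed points are taken levelwise by a simplicial action.

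For the first homeomorphism $|(\sd_r X_\bullet)^{C_r}| \cong \rho_r^* |P_r(\sd_r X_\bullet)^{C_r}|$, the cyclic space $(\sd_r X_\bullet)^{C_r}$ and the $r$-cyclic space $P_r(\sd_r X_\bullet)^{C_r}$ have identical underlying simplicial objects; the quotient functor $P_r$ only changes how one identifies morphisms, not the spaces of simplices. Consequently the two geometric realizations agree as spaces. The difference is how they are endowed with an $S^1$-action: the $r$-cyclic realization gets the action from the preceding theorem via its $r$-cyclic structure, while the cyclic realization gets the action via its cyclic structure, and these two actions differ precisely by the isomorphism $\rho_r: S^1 \congar S^1/C_r$. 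For the second homeomorphism, the $C_r$-action on $|\sd_r X_\bullet|$ is simplicial (generated at level $n-1$ by $t_{rn-1}^n$), so taking fixed points commutes with realization in our category of spaces, yielding $|\sd_r X_\bullet|^{C_r} \cong |(\sd_r X_\bullet)^{C_r}|$; pulling back by $\rho_r$ on both sides gives the claim. The third homeomorphism, $\rho_r^*(|\sd_r X_\bullet|^{C_r}) \cong \rho_r^*(|X_\bullet|^{C_r})$, is immediate from restricting the $S^1$-equivariant homeomorphism $D_r$ to $C_r$-fixed points.

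For the commutative triangle, I would use the compatibility of subdivisions: there is a canonical identification $\sd_{rs} X_\bullet \cong \sd_r(\sd_s X_\bullet)$ (as $rs$-cyclic objects, viewing the right side via the natural inclusion of $rs$-cyclic structure on an $r$-cyclic object in $C_s$-spaces), and the diagonal $D_{rs}$ factors through $D_s \circ D_r$ under this identification. The decomposition $C_{rs} \cong C_s \rtimes C_r$ (or rather the fact that $C_{rs}$-fixed points can be computed as $C_r$-fixed points of $C_s$-fixed points) combined with $\rho_{rs} = \rho_s \circ \rho_r$ under the appropriate identifications lets one trace both paths around the triangle. Concretely, both composites send a simplex in $X_{rsk-1}^{C_{rs}}$ to the same diagonal configuration in $|X_\bullet|^{C_{rs}}$, because both apply the same simplicial diagonal followed by the same $S^1$-action twist.

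The main obstacle, and the source of most of the bookkeeping, is keeping careful track of which $S^1$-action is being used at each stage, particularly when transitioning between $r$-cyclic and cyclic structures via $P_r$ and when iterating subdivisions. Specifically, one must verify that the diagram of isomorphisms $\rho_r, \rho_s, \rho_{rs}$ on $S^1$ is compatible with the nested-fixed-point decomposition $(-)^{C_{rs}} \cong ((-)^{C_s})^{C_r}$ in precisely the way required for the triangle to commute; this is where I expect to spend most of the effort. Once these identifications are fixed consistently, the underlying maps of spaces agree essentially by construction, since each is built from the simplicial diagonal and restriction to fixed points.
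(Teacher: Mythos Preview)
The paper does not actually give a proof of this proposition; it is stated with the bracketed citation \cite[1.10--1.12]{bhm} and left at that, consistent with the remark at the start of the section that the material is standard and that full proofs can be found in the author's thesis. So there is no proof in the paper to compare against.

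Your outline is a reasonable reconstruction of the standard argument and is essentially what one finds in the cited source. A couple of small points: for the second homeomorphism, the claim that geometric realization commutes with levelwise $C_r$-fixed points deserves a one-line justification (it holds because $|{-}|$ is built from finite products with simplices and colimits, and in compactly generated weak Hausdorff spaces these commute with $(-)^{C_r}$ for a finite group acting simplicially). For the triangle, be careful with the phrase ``$C_{rs}\cong C_s\rtimes C_r$'': there is no semidirect product here, just the chain $C_s\leq C_{rs}$ and the induced identification $(-)^{C_{rs}}\cong ((-)^{C_s})^{C_r}$. With those tweaks your plan is sound.
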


\subsection{Cocyclic spaces.}

The previous section dualizes easily.
Recall that a cosimplicial object is a covariant functor $X^\bullet: \mathbf\Delta \to \cat C$.
This can be canonically expressed as an equalizer
\[ X^\bullet \rightarrow \prod_n \Map(\Delta_\bullet[n],X^n)
 \rightrightarrows \prod_{m,n} \Map(\Delta_\bullet[m] \times \mathbf\Delta(m,n),X^n) \]
and so a right adjoint out of cocyclic spaces is determined by what it does to the cosimplicial space $\Map(\Delta_\bullet[n],X^n)$.
The totalization is the unique limit-preserving functor to spaces which takes $\Map(\Delta_\bullet[n],A)$ to $\Map(\Delta^n,A)$.
It is given by the equalizer
\[ \Tot(X^\bullet) \ra \prod_n \Map(\Delta^n,X^n) \rightrightarrows \prod_{m,n} \Map(\Delta^m \times \mathbf\Delta(m,n),X^n) \]
The totalization of a cosimplicial orthogonal spectrum is given by the same formula.

If $X^\bullet$ is not just cosimplicial, but cocyclic, then its totalization is the equalizer
\[ \Tot(X^\bullet) \rightarrow \prod_n \Map(\Lambda^n,X^n)
 \rightrightarrows \prod_{m,n} \Map(\Lambda^m \times \mathbf\Lambda(m,n),X^n) \]
which is enough to prove
\begin{prop}
The totalization of a cocyclic space $X^\bullet$ carries a natural $S^1$-action.
Similarly, the totalization of an $r$-cocyclic space $Y^\bullet$ carries a natural $S^1$-action, in which the action of $C_r \leq S^1$ is the totalization of a cosimplicial map.
\end{prop}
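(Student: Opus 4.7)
The plan is to dualize the argument given for cyclic realizations. The two ingredients from the cyclic case that I would reuse are: (i) each space $\Lambda^n = |\Lambda[n]|$ has a natural $S^1$-action commuting with the action of $\mathbf\Lambda$, and (ii) every morphism $f \in \mathbf\Lambda([m],[n])$ realizes to an $S^1$-equivariant map $f_* \colon \Lambda^m \to \Lambda^n$. Both are already established in the excerpt.

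Starting from the equalizer description of $\Tot(X^\bullet)$ displayed just above the proposition, I would equip each factor $\Map(\Lambda^n, X^n)$ with the $S^1$-action obtained by conjugation through $\Lambda^n$, namely $(g \cdot \phi)(x) = \phi(g^{-1} x)$. The two parallel arrows in the equalizer are then $S^1$-equivariant: the arrow built from $f_* \colon \Lambda^m \to \Lambda^n$ is equivariant because $f_*$ is, while the arrow built from a cosimplicial structure map $X^m \to X^n$ is equivariant for trivial reasons, since it does not touch $\Lambda^m$. Hence $\Tot(X^\bullet)$ inherits a natural $S^1$-action, manifestly functorial in $X^\bullet$.

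For the $r$-cocyclic case I would run the same argument with $\Lambda^n$ replaced throughout by the realization of the representable $r$-cyclic set on $[n-1]$ (in the relabeled indexing where this object corresponds to $[rn-1]$). The theorem cited in the excerpt for $r$-cyclic realizations already provides this with a natural $S^1$-action, whose restriction to $C_r \leq S^1$ is generated by the simplicial self-map $t_{rn-1}^n$. To identify the induced $C_r$-action on $\Tot(Y^\bullet)$ as the totalization of a cosimplicial map, I would use that $t_{rn-1}^n$ also defines, by contravariance in the $r$-cocyclic variable, a cosimplicial self-map $T \colon Y^\bullet \to Y^\bullet$; the Yoneda-style comparison between precomposition with $t_{rn-1}^n$ on the representables and postcomposition with $T$ on $Y^\bullet$ then shows that the resulting $C_r$-action on $\Tot(Y^\bullet)$ coincides with $\Tot(T)$.

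The main obstacle I expect is purely bookkeeping: carefully verifying the $S^1$-equivariance of both arrows in the equalizer, and in the $r$-cocyclic case keeping the two indexings $[n-1]$ and $[rn-1]$ straight while matching up the $C_r$-action coming from the representable picture with the totalization of the explicit cosimplicial self-map $T$. Once these formal verifications are carried out, there is no further content to the argument.
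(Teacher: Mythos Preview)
Your proposal is correct and follows essentially the same route as the paper: the paper presents the equalizer description of $\Tot(X^\bullet)$ in terms of $\prod_n \Map(\Lambda^n,X^n)$ and simply remarks that this ``is enough to prove'' the proposition, with the $r$-cocyclic case handled by the direct dualization of the corresponding cyclic statement. Your write-up is a faithful expansion of that sketch, and the identification of the $C_r$-action with $\Tot(T)$ is exactly the dual of the cited fact that the $C_r$-action on an $r$-cyclic realization is generated by the simplicial map $t_{rn-1}^n$.
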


In the special case of $X^\bullet = \Map(E_\bullet,X)$, where $E_\bullet$ is a cyclic space, the canonical homeomorphism
\[ \Tot(X^\bullet) \cong \Map(|E_\bullet|,X) \]
is $S^1$-equivariant.
A useful example to keep in mind is $\Map(S^1_\bullet,X)$, the standard cosimplicial model for the free loop space $LX$.

Next we recall Reedy fibrancy, which we will only need for cosimplicial spectra (as opposed to spaces). We recall that the construction of the latching map $L_n X \to X_n$ for simplicial spectra dualizes to that of the matching map $X^n \to M_n X$ for cosimplicial spectra. We say that $X^\bullet$ is \emph{Reedy fibrant} if these matching maps are fibrations in the stable model structure on orthogonal spectra. The standard analogue of Proposition \ref{reedy_cofibrant_spectra} is:
\begin{prop}
A weak equivalence of Reedy fibrant cosimplicial spectra induces a weak equivalence on the totalizations.
\end{prop}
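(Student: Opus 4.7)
The plan is to reduce the problem to a tower of partial totalizations, show that each stage fits into a homotopy pullback controlled by the matching map, and then use standard properties of towers of fibrations. Define $\Tot_n X^\bullet$ to be the equalizer of
\[ \prod_{k \leq n} \Map(\Delta^k, X^k) \rightrightarrows \prod_{m,k \leq n} \Map(\Delta^m \times \mathbf\Delta(m,k), X^k) \]
so that $\Tot X^\bullet = \lim_n \Tot_n X^\bullet$, and set $\Tot_{-1} X^\bullet = *$. I would first identify $\Tot_n X^\bullet$ as a pullback
\[ \xymatrix@C=1em{
\Tot_n X^\bullet \ar[r] \ar[d] & \Map(\Delta^n, X^n) \ar[d] \\
\Tot_{n-1} X^\bullet \ar[r] & \Map(\partial \Delta^n, X^n) \times_{\Map(\partial \Delta^n, M^n X)} \Map(\Delta^n, M^n X)
} \]
This is essentially formal from the equalizer definition, since the only cosimplicial relations involving $X^n$ that are not already imposed by $\Tot_{n-1}$ are those specifying how the boundary maps out of $\Delta^n$ match the codegeneracies from $X^n$ to lower-dimensional $X^k$.

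Next, Reedy fibrancy of $X^\bullet$ means $X^n \to M^n X$ is a fibration of orthogonal spectra, and the pushout-product axiom for the mapping spectra $\Map(-, -)$ against the cofibration $\partial\Delta^n \hookrightarrow \Delta^n$ then forces the right-hand vertical map above to be a fibration. Hence the left-hand vertical is a fibration, with fiber equivalent to $\Omega^n$ of the fiber of $X^n \to M^n X$. Given a level equivalence $f: X^\bullet \to Y^\bullet$ of Reedy fibrant cosimplicial spectra, I would argue by induction on $n$: Reedy fibrancy implies that $f$ also induces a weak equivalence $M^n X \to M^n Y$ (standard Reedy category argument, again by induction using the dual latching cube), so one gets a weak equivalence of fibers at each stage, and by the five lemma applied to the long exact sequences of stable homotopy groups (or the ladder of fibration sequences), $\Tot_n X^\bullet \to \Tot_n Y^\bullet$ is a weak equivalence.

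Finally, I would pass to the limit. Since $\Tot X^\bullet = \lim_n \Tot_n X^\bullet$ is the limit of a tower of fibrations between $\Omega$-spectra, and the maps $\Tot_n X^\bullet \to \Tot_n Y^\bullet$ are compatible weak equivalences, one concludes that $\Tot X^\bullet \to \Tot Y^\bullet$ is a weak equivalence; this uses the Milnor short exact sequence
\[ 0 \to {\lim_n}^1 \pi_{q+1} \Tot_n X^\bullet \to \pi_q \Tot X^\bullet \to \lim_n \pi_q \Tot_n X^\bullet \to 0 \]
and the analogous sequence for $Y^\bullet$. The main obstacle is this last step: one needs to know that $\Tot$ really is the \emph{homotopy} limit of the tower, which is why Reedy fibrancy is essential and cannot be dispensed with. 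Once the tower consists of fibrations, this limit--homotopy limit comparison is standard, and the five-lemma argument on the Milnor sequences finishes the proof.
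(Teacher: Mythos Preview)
Your argument is correct and is essentially the standard proof of this fact; the paper does not actually prove the proposition at all, but merely states it as ``the standard analogue'' of the dual result for Reedy cofibrant simplicial spectra (Proposition \ref{reedy_cofibrant_spectra}). So there is nothing to compare against beyond noting that your approach is the expected one: filter $\Tot$ by partial totalizations $\Tot_n$, identify each stage as a pullback governed by the matching map, use Reedy fibrancy and the pullback-corner axiom to see the tower consists of fibrations, and pass to the limit. One small remark: your parenthetical ``between $\Omega$-spectra'' is not needed and is slightly imprecise---what matters for the Milnor sequence is only that the tower maps $\Tot_n \to \Tot_{n-1}$ are fibrations in the stable model structure, which you have already established.
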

As expected, one can always replace a cosimplicial spectrum by a Reedy fibrant one that is equivalent on every cosimplicial level. In this paper, we will only use Reedy fibrant cosimplicial spectra of the form $F(X_\bullet,Y)$, where $F(-,-)$ is the internal hom in orthogonal spectra, $Y$ is a fibrant spectrum, and $X_\bullet$ is a Reedy $q$-cofibrant simplicial spectrum. It is straightforward to verify from the properties of the model structure in \cite{mmss} that such an $F(X_\bullet,Y)$ is always Reedy fibrant.

Finally, any cocyclic space $X^\bullet$ may be composed with $\sd_r$ to give an $r$-cocyclic space $\sd_r X^\bullet$. As before, the fixed points of $\Tot(X^\bullet)$ can be recovered as $\Tot((\sd_r X^\bullet)^{C_r})$:
\begin{prop}
If $X^\bullet$ is a cosimplicial space, there is a natural diagonal homeomorphism
\[ \Tot(X^\bullet) \overset{D_r}\ra \Tot(\sd_r X^\bullet) \]
If $X^\bullet$ is cocyclic, $D_r$ is $S^1$-equivariant.
\end{prop}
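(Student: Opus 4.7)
The plan is to mimic the proof of Proposition \ref{prop:cyclic_diagonal} dualized to totalizations, then upgrade to $S^1$-equivariance in the cocyclic case. First I define the cosimplicial natural transformation $\Delta_r : \Delta^\bullet \to \sd_r \Delta^\bullet$ by the $r$-fold diagonal
\[ \Delta_r^n(u_0, \ldots, u_n) = \left(\tfrac{u_0}{r}, \ldots, \tfrac{u_n}{r}, \tfrac{u_0}{r}, \ldots, \tfrac{u_n}{r}, \ldots, \tfrac{u_0}{r}, \ldots, \tfrac{u_n}{r}\right) \]
with $r$ concatenated blocks, landing in $\Delta^{rn+r-1}$. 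Naturality, i.e., the identity $\Delta_r^n \circ f_\ast = \sd_r(f)_\ast \circ \Delta_r^m$ for every $f:[m] \to [n]$ in $\mathbf\Delta$, is a routine check on generating cofaces and codegeneracies. For $\alpha \in \Tot(X^\bullet)$, viewed as a cosimplicial map $\alpha : \Delta^\bullet \to X^\bullet$, I set $D_r(\alpha)^n := \alpha^{rn+r-1} \circ \Delta_r^n$; this is continuous in $\alpha$ and defines a cosimplicial map into $\sd_r X^\bullet$ thanks to the naturality of $\Delta_r$ and the cosimplicial naturality of $\alpha$.

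To see $D_r$ is a homeomorphism, I reduce to cofree cosimplicial spaces $R_n(Y)$, defined by $R_n(Y)^m = Y^{\mathbf\Delta([m],[n])}$. The Yoneda-coend computation yields
\[ \Tot R_n(Y) \;=\; \Map(|\Delta[n]|, Y) \;=\; \Map(\Delta^n, Y), \qquad \Tot \sd_r R_n(Y) \;=\; \Map(|\sd_r \Delta[n]|, Y), \]
and $D_r$ becomes $\Map(-,Y)$ applied to the diagonal homeomorphism $|\sd_r \Delta[n]| \simar |\Delta[n]|$ of Proposition \ref{prop:cyclic_diagonal}, hence is a homeomorphism. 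Every cosimplicial space $X^\bullet$ is canonically the equalizer of a pair of maps between products of the $R_n(X^n)$'s; since $\sd_r$ commutes with this presentation (it is pullback along a functor between indexing categories) and $\Tot$ preserves the limits involved, the $D_r$ comparison is a homeomorphism in general.

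For the cocyclic case I run the same argument with cofree cocyclic representables $R_n^\Lambda(Y)^m = Y^{\mathbf\Lambda([m],[n])}$; the totalizations become $\Map(|\Lambda[n]|, Y) = \Map(\Lambda^n, Y)$ and $\Map(|\sd_r \Lambda[n]|, Y)$, each equipped with the natural $S^1$-action induced by translation on the topological cycles. The map $D_r$ is $\Map(-,Y)$ applied to the diagonal homeomorphism $|\sd_r \Lambda[n]| \congar |\Lambda[n]|$, which is $S^1$-equivariant by the cyclic refinement of Proposition \ref{prop:cyclic_diagonal} (recorded in the paper immediately thereafter). The limit argument propagates this $S^1$-equivariance to every cocyclic $X^\bullet$. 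The main obstacle is the bookkeeping in the reduction-to-representables step, specifically verifying that $\sd_r$ commutes with the canonical equalizer presentation and that $\Tot$ preserves the relevant limits; once this is in place, everything follows formally from Proposition \ref{prop:cyclic_diagonal} and its $S^1$-equivariant refinement.
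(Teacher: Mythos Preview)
Your proposal is correct and is precisely the dualization the paper has in mind; the paper's own proof is the single sentence ``The proofs are easy dualizations or direct copies of the proofs for cyclic spaces,'' and your argument carries out exactly that dualization using the canonical equalizer presentation of cosimplicial spaces already recorded in the paper. Your reduction to cofree objects $R_n(Y)$ and the identification of $D_r$ on them with $\Map(-,Y)$ applied to the simplicial diagonal of Proposition~\ref{prop:cyclic_diagonal} is the natural way to make the one-line claim precise.
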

\begin{prop}\label{cocyclic_fixed_points}
If $X^\bullet$ is a cocyclic space, then $(\sd_r X^\bullet)^{C_r}$ may be regarded as a cocyclic space, and there are natural $S^1$-equivariant homeomorphisms
\[ \Tot((\sd_r X^\bullet)^{C_r}) \cong \rho_r^*\Tot(\sd_r X^\bullet)^{C_r} \overset{D_r^{C_r}}\la \rho_r^*\Tot(X^\bullet)^{C_r} \]
\end{prop}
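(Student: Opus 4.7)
The plan is to dualize the proof of Proposition \ref{cyclic_fixed_points} verbatim, exchanging colimits for limits, realizations for totalizations, and left Kan extensions for right Kan extensions. Since all three spaces in question are defined as point-set equalizers rather than as homotopy limits, everything reduces to juggling indexing categories and fixed-point functors.

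First I would justify that $(\sd_r X^\bullet)^{C_r}$ is naturally cocyclic. The $r$-cocyclic space $\sd_r X^\bullet$ has a cosimplicial $C_r$-action generated at cosimplicial level $n-1$ by the cyclic operator $(t^{rn-1})^n$, exactly as in the cyclic case. On levelwise fixed points this action becomes trivial, so the $r$-cocyclic structure factors through the quotient functor $P_r\colon \mathbf\Lambda_r \to \mathbf\Lambda$ to give a cocyclic structure on $(\sd_r X^\bullet)^{C_r}$. This is the exact cosimplicial analogue of the argument preceding Proposition \ref{cyclic_fixed_points}.

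Next, for the first homeomorphism $\Tot((\sd_r X^\bullet)^{C_r}) \cong \rho_r^*\Tot(\sd_r X^\bullet)^{C_r}$, I would write both sides as equalizers. The left-hand side, computed with the cocyclic structure on $(\sd_r X^\bullet)^{C_r}$, is by definition the equalizer of two maps between products of the form $\prod_n \Map(\Lambda^n, (X^{rn-1})^{C_r})$. The right-hand side is the $C_r$-fixed points of the analogous equalizer for the $r$-cocyclic space $\sd_r X^\bullet$. Since $\Map(\Lambda^n,-)$ commutes with $C_r$-fixed points (as $\Lambda^n$ has trivial $C_r$-action under the cocyclic indexing) and since $C_r$-fixed points commute with products and equalizers, the two equalizers coincide. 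The identification of $S^1$-actions goes through because on the left the $S^1$-action is the one coming from the cocyclic structure, while on the right the residual $S^1/C_r$-action on $\Tot(\sd_r X^\bullet)^{C_r}$ is pulled back along $\rho_r$ by definition, and one checks on each cosimplicial level that the generator of $C_{rs}/C_r$ corresponds under $P_r$ to the cocyclic generator $t^{sn-1}$.

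For the second homeomorphism I would simply invoke the preceding proposition, which gives an $S^1$-equivariant diagonal homeomorphism $D_r\colon \Tot(X^\bullet) \to \Tot(\sd_r X^\bullet)$ for any cocyclic $X^\bullet$. Applying the $C_r$-fixed-point functor to this homeomorphism and using that $D_r$ intertwines the $S^1$-actions on the two sides (with the target's action pulled back through $\rho_r^*$ upon restriction to $C_r$-fixed points) yields the $S^1$-equivariant homeomorphism $D_r^{C_r}\colon \rho_r^*\Tot(X^\bullet)^{C_r} \to \rho_r^*\Tot(\sd_r X^\bullet)^{C_r}$ claimed. The only real obstacle is the bookkeeping in step two: one must verify that the cocyclic structure obtained via $P_r$ on $(\sd_r X^\bullet)^{C_r}$ produces precisely the residual $S^1$-action coming from the quotient $S^1 \to S^1/C_r \cong S^1$. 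This is a direct dualization of \cite{bhm}, 1.10-1.12, and I would carry it out by reducing to the representable case $X^\bullet = \Map(\Lambda_\bullet[m], A)$ for fixed spaces $A$, where the $S^1$-actions on both sides can be computed explicitly from the $S^1$-actions on $\Lambda^n$.
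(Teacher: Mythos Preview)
Your proposal is correct and is precisely what the paper does: its entire proof is the sentence ``The proofs are easy dualizations or direct copies of the proofs for cyclic spaces,'' and you have simply written out that dualization in detail. Your three steps (factoring through $P_r$, commuting fixed points with the totalization equalizer, and applying $(-)^{C_r}$ to the equivariant diagonal $D_r$) are exactly the cocyclic analogues of the ingredients in Proposition~\ref{cyclic_fixed_points}.
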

The proofs are easy dualizations or direct copies of the proofs for cyclic spaces.

\subsection{The suspension spectrum of $LX$.}

We end this section with a more concrete example.
If $X$ is any unbased space, then $\Map(S^1_\bullet,X)$ is a cocyclic space.
We add a disjoint basepoint, and smash every level with the sphere spectrum, yielding a cocyclic spectrum
\[ \Sph \sma \Map(S^1_\bullet,X)_+ = \Sigma^\infty_+ X^{\bullet + 1} \]
It is not hard to check that there is a natural map
\begin{equation}\label{eq:lx_tot_interchange}
\Sigma^\infty_+ LX = \Sigma^\infty_+ \Tot(X^{\bullet + 1}) \ra \Tot(\Sigma^\infty_+ X^{\bullet + 1})
\end{equation}
given by the interchange
\begin{equation}\label{eq:lx_interchange}
\Sph \sma \prod_k \Map_*(\Delta^k_+,X^k_+)
 \ra \prod_k \Sph \sma \Map_*(\Delta^k_+,X^k_+)
 \ra \prod_k F(\Delta^k_+,\Sph \sma X^k_+)
\end{equation}
where $F(A,E)$ denotes the mapping spectrum or cotensor of a space $A$ with an orthogonal spectrum $E$.
On each spectrum level, the map \eqref{eq:lx_tot_interchange} is a bijection on the underlying sets, but it is likely not a homeomorphism, because assembly maps of the form $A \sma \Map_*(B,C) \to \Map_*(B,A \sma C)$ fail to be closed inclusions (\cite{lewis1978stable}, Appendix A, 8.6).
It does not really matter, because the cocyclic spectrum $\Sigma^\infty_+ X^{\bullet + 1}$ is not Reedy fibrant, and so it must be replaced if the totalization is to be homotopically meaningful.
Taking a Reedy fibrant replacement $R\Sigma^\infty_+ X^{\bullet + 1}$ and totalizing gives a derived version of the interchange map
\[ \Sigma^\infty_+ LX \ra \Tot(\Sigma^\infty_+ X^{\bullet + 1}) \ra \Tot(R\Sigma^\infty_+ X^{\bullet + 1}). \]

\begin{prop}\label{LX_is_derived_tot}
This composite is a stable equivalence when $X$ is simply-connected.
\end{prop}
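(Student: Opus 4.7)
The plan is to combine two convergence statements: the classical fact that $LX = \Tot(\Map(S^1_\bullet,X))$ is computed as the inverse limit of its partial totalization tower whenever $X$ is simply-connected, and its spectrum analogue that $\Sigma^\infty_+$ commutes with this inverse limit in the same range.

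First I would compare the partial totalizations. For each $n \geq 0$, there is a natural commuting square
\begin{equation*}
\xymatrix{
\Sigma^\infty_+ \Tot^n(X^{\bullet+1}) \ar[r] \ar[d] & \Tot^n(R\Sigma^\infty_+ X^{\bullet+1}) \ar[d] \\
\Sigma^\infty_+ \Tot^{n-1}(X^{\bullet+1}) \ar[r] & \Tot^{n-1}(R\Sigma^\infty_+ X^{\bullet+1})
}
\end{equation*}
whose vertical maps have fibers controlled by the $n$-th normalized matching fiber of $X^{\bullet+1}$ and of $R\Sigma^\infty_+ X^{\bullet+1}$ respectively. By induction on $n$, using that $\Sigma^\infty_+$ commutes with finite homotopy pullbacks of based spaces in a connectivity range (Blakers--Massey), the horizontal maps are equivalences in a range that grows with $n$.

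Second I would control the passage to the limit. The $n$-th normalized cochains of $X^{\bullet+1}$ grow linearly in connectivity when $X$ is simply-connected, so the classical Bousfield--Kan tower converges strongly to $LX$. The same estimate, together with Milnor's $\lim^1$ short exact sequence, shows that the spectrum tower $\{\Tot^n(R\Sigma^\infty_+ X^{\bullet+1})\}$ converges to $R\Tot(\Sigma^\infty_+ X^{\bullet+1})$. Combining these two convergence results with the finite-stage equivalences from the first step yields the desired equivalence $\Sigma^\infty_+ LX \simeq R\Tot(\Sigma^\infty_+ X^{\bullet+1})$.

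The main obstacle is the connectivity bookkeeping: showing that the normalized $n$-cochains of $X^{\bullet+1}$ are, say, $(n-1)$-connected for simply-connected $X$, and then verifying that this linear growth suffices both for the Blakers--Massey range at each finite stage and for the Milnor $\lim^1$ vanishing at the limit. The simply-connected hypothesis enters precisely here: without it, the matching fibers would not gain connectivity with $n$, and neither the space-level nor the spectrum-level tower would converge.
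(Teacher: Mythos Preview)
Your outline is correct, but it takes a different route from the paper's main argument. The paper does not prove the finite-stage comparison directly via Blakers--Massey. Instead it first invokes Kuhn's theorem \cite[6.6]{kuhn2004mccord} to settle the case of \emph{finite} $X$ outright, and then extends to arbitrary simply-connected $X$ by showing that both sides commute with filtered homotopy colimits. The connectivity estimate on the fibers of the coskeletal tower (which the paper identifies explicitly as $\Omega^n \Sigma^\infty X^{\sma n} \vee \Omega^n \Sigma^\infty X^{\sma(n+1)}$) is used only for this colimit-commutation step, not to compare the two sides directly.

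Your approach is closer in spirit to the paper's \emph{alternative} argument, which identifies the cyclic coskeletal filtration of the right-hand side with Arone's model of the Goodwillie tower of $\Sigma^\infty_+ LX$. What you gain is self-containment: you do not need to cite Kuhn, and your argument would in principle work uniformly without first isolating the finite case. What the paper's main proof gains is brevity and an explicit identification of the fibers, at the cost of importing a nontrivial external result. The connectivity bookkeeping you flag as the main obstacle is real but routine: the normalized $n$th layer is governed by $X^{\sma n}$, which is $(2n-1)$-connected for simply-connected $X$, and this linear growth is exactly what both the Blakers--Massey range and the $\lim^1$ vanishing require.
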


% In the language of Kuhn's paper, this map lands in the dual of the realization of a Reedy cofibrant model for $THH(DX)$, which is then automatically the totalization of a Reedy fibrant model for $\Sigma^\infty_+ X^{\bullet + 1}$, and he shows that map is an equivalence. So this really is captured by his theorem.

% This seems to be related to results on the convergence of Eilenberg-Moore type spectral sequences, e.g. Thm 2.2 from Loday's Free Loop Space and Homology paper. However it's rather difficult to actually go back and forth between the chain complex results and the spectral results.

\begin{proof}
We first recall that the case where $X$ is finite follows from \cite[6.6]{kuhn2004mccord}, with $K = S^1$ and $Z = X$. To see why, we observe that the cyclic bar construction on the dual $DX$ can be made into a Reedy cofibrant cyclic spectrum (see section \ref{sec:bar}). Applying $F(-,f\Sph)$, where $f\Sph$ is a fibrant replacement of the sphere spectrum, gives a Reedy fibrant cosimplicial spectrum replacing $\Sigma^\infty_+ X^{\bullet + 1}$. One then checks that the map of Kuhn's theorem lines up with the interchange we described above.

To get the general case, it suffices to show that both sides of the interchange map commute with filtered homotopy colimits of simply-connected spaces. Using the ``cube of retracts'' terminology from \cite{malkiewich2015tower}, we identify the fibers of the coskeletal filtration of $\Tot(R\Sigma^\infty_+ X^{\bullet + 1})$ as
\[ F(\Delta^n/\partial \Delta^n, \Sigma^\infty_+ X \sma \Sigma^\infty X^{\sma n}) \simeq \Omega^n \Sigma^\infty X^{\sma n} \vee \Omega^{n} \Sigma^\infty X^{\sma (n+1)}. \]
The connectivity of these fibers tends to infinity when $X$ is simply-connected, and it follows easily that the limit of the tower commutes with such filtered homotopy colimits.

% One might also get the case of $X$ finite by identifying the interchange map on cohomology with the isomorphism $\psi$ from \cite[p.416]{jones1987cyclic}.
An alternative argument uses the ``cyclic coskeletal filtration'' for the right-hand side, whose fibers are
\[ F^{C_{n+1}}(\Lambda^n/\partial \Lambda^n, \Sigma^\infty X^{\sma (n+1)}) \simeq \Omega^n \Sigma^\infty X^{\sma (n+1)} \vee \Omega^{n+1} \Sigma^\infty X^{\sma (n+1)}. \]
Along the interchange map, this filtration can be shown to agree with Arone's model of the Taylor tower for $\Sigma^\infty_+ LX$ from \cite{arone_snaith}.

\end{proof}

\section{Orthogonal $G$-spectra, equivariant smash powers, and rigidity.}\label{sec:orthogonal}

We will now review the theory of orthogonal $G$-spectra and prove our rigidity theorem for the geometric fixed point functor $\Phi^G$.
This result is a technical linchpin that underlies the rest of our treatment of cyclotomic spectra and the cyclic bar construction.
It allows us to cleanly reconstruct and extend the model of $THH$ presented in \cite{angeltveit2014relative}.

\subsection{Basic definitions, model structures, and fixed points.}\label{gspectra_basics}

We take these definitions from \cite{mandell2002equivariant} and \cite{hhr}.
\begin{df}
If $G$ is a fixed compact Lie group, an \emph{orthogonal $G$-spectrum} is a sequence of based spaces $\{X_n\}_{n=0}^\infty$ equipped with
\begin{itemize}
\item A continuous action of $G \times O(n)$ on $X_n$ for each $n$
\item A $G$-equivariant structure map $\Sigma X_n \to X_{n+1}$ for each $n$
\end{itemize}
such that the composite
\[ S^p \sma X_n \ra \ldots \ra S^1 \sma X_{(p-1)+n} \ra X_{p + n} \]
is $O(p) \times O(n)$-equivariant. A map of orthogonal $G$-spectra $X \to Y$ is a collection of maps $X_n \to Y_n$ commuting with all the structure, including the $G$-actions.
\end{df}
\begin{df}
Let $U$ be a complete $G$-universe as in \cite{mandell2002equivariant}.
The category $\mathscr J_G$ has objects the finite-dimensional $G$-representations $V \subset U$, or any orthogonal $G$-representation isomorphic to such a subspace. The mapping spaces $\mathscr J_G(V,W)$ are the Thom spaces $O(V,W)^{W - V}$, consisting of linear isometries $f: V \to W$ with choices of point in the orthogonal complement $W - f(V)$.
The group $G$ acts on $O(V,W)^{W - V}$ by conjugating the map and acting on the point in $W - f(V)$.
\end{df}
\begin{df}
A \emph{$\mathscr J_G$-space} is an equivariant functor $\mathscr J_G$ into based $G$-spaces and nonequivariant maps.
That is, each $V$ is assigned to a based space $X(V)$, and for each pair $V,W$ the map
\[ \mathscr J_G(V,W) \ra \Map_*(X(V),X(W)) \]
is equivariant. A map of $\mathscr J_G$-spaces is a collection of $G$-equivariant maps $X(V) \to Y(V)$ commuting with the action of $\mathscr J_G$.
\end{df}
\begin{prop}
Every $\mathscr J_G$-space gives an orthogonal $G$-spectrum by restricting to $V = \R^n$; denote this functor $\mc I_U^{\R^\infty}$.
Conversely, given an orthogonal $G$-spectrum $X$ one may define a $\mathscr J_G$-space by the rule
\[ X(V) = X_n \sma_{O(n)} O(\R^n,V)_+, \quad n = \dim V, \]
with $G$ acting diagonally on $X_n$ and on $O(\R^n,V) = \mathscr J_G(\R^n,V)$.
Denote this functor $\mc I_{\R^\infty}^U$.
Then $\mc I_{\R^\infty}^U$ and $\mc I_U^{\R^\infty}$ are inverse equivalences of categories.
\end{prop}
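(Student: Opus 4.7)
The plan is to construct natural isomorphisms $\mc I_U^{\R^\infty} \circ \mc I_{\R^\infty}^U \cong \id$ and $\mc I_{\R^\infty}^U \circ \mc I_U^{\R^\infty} \cong \id$, each going in the direction of the identity functor. Both constructions rest on two elementary observations. First, $\mathscr J_G(\R^n, \R^n) = O(n)_+$ (the Thom space of the rank-zero bundle over $O(n)$), so that the $\mathscr J_G$-action on $X(\R^n)$ restricts to precisely the $O(n)$-action in the definition of an orthogonal $G$-spectrum. Second, for any $n$-dimensional $V \in \mathscr J_G$, the space $O(\R^n, V)$ of linear isometries is a free transitive $O(n)$-torsor under precomposition, since any such isometry is automatically invertible.

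For the first composite applied to an orthogonal $G$-spectrum $X$: at level $n$ we obtain $X_n \sma_{O(n)} O(n)_+ \to X_n$ via the $O(n)$-action, which is a homeomorphism because $O(n)_+$ is a free $O(n)$-space. For the second composite applied to a $\mathscr J_G$-space $Y$: at $V$ the map $Y(\R^n) \sma_{O(n)} O(\R^n, V)_+ \to Y(V)$ is induced by the $\mathscr J_G$-action. Picking any $\phi \in O(\R^n, V)$ (which exists), it induces $\phi_*: Y(\R^n) \to Y(V)$ with inverse $(\phi^{-1})_*$, and since every $O(n)$-orbit in $O(\R^n, V)$ contains such a $\phi$ exactly once, passing to the quotient identifies the two sides. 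In both cases one then checks compatibility with the structure maps and the $G$-action, which is automatic from the intrinsic nature of the constructions.

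The main obstacle is to verify that $\mc I_{\R^\infty}^U$ actually lands in $\mathscr J_G$-spaces, i.e., that the formula $V \mapsto X_n \sma_{O(n)} O(\R^n, V)_+$ extends to an equivariant functor on all of $\mathscr J_G$. For a morphism $(f, w) \in \mathscr J_G(\R^m, \R^n)$, consisting of an isometric embedding together with a point in the orthogonal complement, one uses the homeomorphism
\[ \mathscr J_G(\R^m, \R^n) \cong O(n)_+ \sma_{O(n-m)} S^{n-m} \]
to assemble the required structure map from the iterated suspension maps $S^{n-m} \sma X_m \to X_n$ of the underlying orthogonal spectrum together with the $O(n)$-action on $X_n$. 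Functoriality then reduces to a diagram chase unwinding composition in $\mathscr J_G$ as suspension followed by rotation, and equivariance of the action maps under $G$ follows because $G$ acts on $X(V)$ solely through its action on $V$ (hence on $O(\R^n, V)$), while $\R^n$ carries the trivial $G$-representation.
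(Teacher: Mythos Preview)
The paper does not supply a proof of this proposition; it is stated as a recall of standard material from \cite{mandell2002equivariant} and \cite{hhr}. Your argument follows the expected approach and is essentially correct, but there is one genuine slip worth flagging.

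In your final sentence you assert that ``$G$ acts on $X(V)$ solely through its action on $V$ (hence on $O(\R^n,V)$).'' This contradicts the statement of the proposition itself, which specifies that $G$ acts \emph{diagonally} on $X_n$ and on $O(\R^n,V)$. An orthogonal $G$-spectrum carries a $G \times O(n)$-action on each level $X_n$, and the $G$-part of that action does not disappear in the construction of $\mc I_{\R^\infty}^U X$. The $G$-equivariance of the $\mathscr J_G$-structure maps holds not because $G$ is absent from $X_n$, but because the original structure maps $\Sigma X_n \to X_{n+1}$ are already $G$-equivariant (with $G$ acting on both $X_n$ and $X_{n+1}$, and trivially on the suspension coordinate); this equivariance is what propagates through your assembly of the $\mathscr J_G$-action. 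A smaller point: your discussion of functoriality only explicitly treats morphisms in $\mathscr J_G(\R^m,\R^n)$, whereas one needs the action of all of $\mathscr J_G(V,W)$. This is easily repaired using the torsor observation you already made, by composing a general $(f,w) \in \mathscr J_G(V,W)$ with the isometry $\R^n \to V$ coming from the $O(\R^n,V)$ factor, but it should be said.
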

\begin{df}
Given a $G$-representation $V$ and based $G$-space $A$, the \emph{free spectrum} $F_V A$ is the $\mathscr J_G$-space
\[ (F_V A)(W) := \mathscr J_G(V,W) \sma A \]
For fixed $V$, the functor $A \mapsto F_V A$ is the left adjoint to the functor that evaluates a $\mathscr J_G$-space at $V$.
\end{df}

\begin{prop} \cite{mandell2002equivariant} \label{prop:model_structure}
There is a cofibrantly generated model structure on the category of orthogonal $G$-spectra, in which the cofibrations are the retracts of the cell complexes built from
\[ F_V((G/H \times \partial D^k)_+) \hookrightarrow F_V((G/H \times D^k)_+) \qquad k \geq 0, \ H \leq G, \ V \subset U \]
and the weak equivalences are the maps inducing isomorphisms on the stable homotopy groups
\[ \pi_k^H(X) = \left\{ \begin{array}{rl}
\underset{V \subset U}\colim\, \pi_k(\Map^H_*(S^V,X(V))) , & \qquad k \geq 0 \\
\underset{V \subset U}\colim\, \pi_0(\Map^H_*(S^{V-\R^{|k|}},X(V))) , & \qquad k < 0, \ \R^{|k|} \subset V \end{array}\right., \]
where $\Map^H_*(-,-)$ denotes the space of $H$-equivariant maps.
\end{prop}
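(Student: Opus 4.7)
The plan is to invoke Kan's recognition theorem for cofibrantly generated model categories, following the two-stage construction of Mandell--May: first build a level model structure, then left Bousfield localize to obtain the stable one. I would not try to write down the stable fibrations explicitly; it is easier to characterize them by a right lifting property.

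First, I would set up the level model structure. The generating cofibrations are the ones listed in the statement. The generating acyclic cofibrations are obtained by applying $F_V$ to standard acyclic $G$-CW generators $(G/H \times D^k)_+ \to (G/H \times D^k \times I)_+$. Level weak equivalences are maps $X \to Y$ such that $X(V)^H \to Y(V)^H$ is a weak equivalence for every $V \subset U$ and every closed subgroup $H \leq G$. To check Kan's hypotheses, I would use that $F_V$ is left adjoint to evaluation at $V$, so lifting against the generating acyclic cofibrations of orthogonal $G$-spectra reduces to lifting against standard acyclic $G$-CW inclusions at each level; the standard model structure on based $G$-spaces does the rest. Smallness of the domains is automatic since $G/H \times D^k$ is compact and the domains of free spectra are retained as compact-like objects under the sequential colimits in orthogonal spectra. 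The pushout-product and retract axioms are routine.

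Second, I would localize. Define stable equivalences as maps inducing isomorphisms on $\pi_k^H$ as in the statement. To get the stable model structure, augment the generating acyclic cofibrations with a set $\{\lambda_{V,W}\}$ of cylinder-replaced maps $F_{V \oplus W} S^W \to F_V S^0$ that are adjoint to the identity $S^W \to S^W$; these force the lax structure maps $X(V) \to \Omega^W X(V \oplus W)$ to become weak equivalences after fibrant replacement, which is equivalent to stability. Stable fibrations are then defined as maps with the right lifting property against the enlarged set of generating acyclic cofibrations. Kan's recognition theorem applies provided (a) the set of generating acyclic cofibrations has small domains and (b) every relative cell complex built from them is a stable equivalence. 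Part (b) is the serious content.

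The main obstacle is exactly (b): showing that transfinite composites of pushouts of $\lambda_{V,W}$'s and level-acyclic cofibrations remain stable equivalences. The key tools are the long exact sequence of homotopy groups for a cofiber sequence of orthogonal $G$-spectra (which relies on the equivariant Freudenthal theorem applied to $\Map^H_*(S^V,-)$ for $V$ sufficiently large), the fact that $\pi_*^H$ commutes with filtered colimits along $h$-cofibrations, and a direct computation showing each $\lambda_{V,W}$ is a stable equivalence. Given these, pushouts of $\lambda_{V,W}$ along arbitrary maps are stable equivalences by a five-lemma argument on the associated cofiber sequences, and transfinite composition is handled by the colimit compatibility of $\pi_*^H$. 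Once (b) is established, the stable model structure is formal, and the description of the cofibrations is unchanged from the level model structure because localization only enlarges the class of acyclic cofibrations, not cofibrations.
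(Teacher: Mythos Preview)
The paper does not prove this proposition at all; it simply states the result and cites \cite{mandell2002equivariant}. Your outline is essentially the strategy carried out in that reference (level model structure followed by left Bousfield localization at the maps $\lambda_{V,W}$), so there is nothing to compare against in the present paper itself.
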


\begin{prop} \cite{mandell2002equivariant}
The category $\mathscr J$ is symmetric monoidal, using the direct sum of representations. The Day convolution along $\mathscr J$ defines a smash product on the category of orthogonal $G$-spectra which makes it into a closed symmetric monoidal category. This smash product is a left Quillen bifunctor with respect to the above model structure.
\end{prop}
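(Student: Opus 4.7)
The plan is to verify the three assertions separately: symmetric monoidality of $\mathscr J$, existence of a closed symmetric monoidal structure on orthogonal $G$-spectra via Day convolution, and the Quillen bifunctor axiom. Since the result is attributed to \cite{mandell2002equivariant}, the goal is really to assemble the standard pieces in the framework just set up.

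First I would check that $\mathscr J_G$ carries a symmetric monoidal structure with unit $0 \in U$ and product given on objects by direct sum. The required pairing on morphisms
\[ \mathscr J_G(V,W) \wedge \mathscr J_G(V',W') \ra \mathscr J_G(V \oplus V', W \oplus W') \]
sends a pair of isometries $(f,f')$ and complementary points $(w,w')$ to the isometry $f \oplus f'$ together with the point $(w,w')$ in the orthogonal complement of $(f \oplus f')(V \oplus V')$; this is continuous, equivariant, associative and symmetric by direct inspection. Next I would obtain the smash product of $\mathscr J_G$-spaces by Day convolution: for $X,Y$ this is the left Kan extension
\[ (X \sma Y)(U) = \int^{V,V'} \mathscr J_G(V \oplus V', U) \sma X(V) \sma Y(V'), \]
and the formal Day convolution machinery automatically produces a closed symmetric monoidal structure once $\mathscr J_G$ is symmetric monoidal and the target category of based $G$-spaces is closed symmetric monoidal and cocomplete. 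The internal hom $F(Y,Z)$ is the end $\int_{V'} \Map_*(Y(V'), Z(- \oplus V'))$, and translating under the equivalence $\mc I^U_{\R^\infty}, \mc I_U^{\R^\infty}$ gives the smash product on orthogonal $G$-spectra.

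For the Quillen bifunctor assertion I would use the defining property of Day convolution, namely $F_V A \sma F_W B \cong F_{V \oplus W}(A \sma B)$, to reduce the pushout-product axiom on generating (acyclic) cofibrations to a statement about based $G$-spaces. Explicitly, the pushout-product of the generating cofibrations
\[ F_V((G/H \times \partial D^k)_+) \hookrightarrow F_V((G/H \times D^k)_+), \quad F_W((G/K \times \partial D^\ell)_+) \hookrightarrow F_W((G/K \times D^\ell)_+) \]
is $F_{V \oplus W}$ applied to the pushout-product of $(G/H \times \partial D^k \to G/H \times D^k)_+$ with $(G/K \times \partial D^\ell \to G/K \times D^\ell)_+$, which after a product decomposition is a relative cell $G$-CW complex and hence a cofibration of based $G$-spaces. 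The left adjoint $F_{V \oplus W}$ sends this cofibration to a cofibration of orthogonal $G$-spectra. The cofibration half of the pushout-product axiom therefore holds, and the unit axiom is immediate from $F_0 S^0 = \Sph$.

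The main obstacle will be the acyclicity half: showing that if one of the two generators is taken from the generating acyclic cofibrations, the pushout-product is still a stable equivalence. The generating acyclic cofibrations are not simply of the form $F_V(-)$ applied to a space-level acyclic cofibration; they include the standard ``$\lambda$-maps'' $F_{V \oplus W}(S^W \wedge A) \to F_V A$ that witness stabilization. Here I would argue as in \cite{mandell2002equivariant}: smashing such a $\lambda$-map with an arbitrary cofibrant orthogonal $G$-spectrum $Z$ still produces a stable equivalence, because upon evaluating $\pi_\ast^H$ one reduces to the statement that $S^W \wedge (-)$ is a self-equivalence on stable homotopy, and this survives the pushout-product construction by the usual long-exact-sequence/gluing argument for cofibrations of orthogonal $G$-spectra. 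Combined with the cofibration half, this yields the pushout-product axiom and hence the Quillen bifunctor property, completing the proof.
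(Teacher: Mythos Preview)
The paper does not prove this proposition; it is stated with the citation to \cite{mandell2002equivariant} and no argument is given. Your outline is a faithful sketch of the standard proof found there, correctly isolating the acyclicity half of the pushout-product axiom (involving the $\lambda$-maps) as the only nontrivial step.
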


When working with $G = S^1$, it is common to consider a broader class of weak equivalences that see only the finite subgroups $C_n \leq S^1$.
% This comes with its own model structure, though we will not need to use it.
\begin{df}\label{df:f_equiv}
A map of $S^1$-spectra is an \emph{$\mathcal F$-equivalence} if it is an equivalence as a map of $C_n$-spectra for all $n \geq 1$; equivalently it is an isomorphism on the homotopy groups $\pi_k^{C_n}(X)$ for all $n \geq 1$.
\end{df}

Next we recall the definitions of genuine and geometric fixed points.
If $X$ is a $G$-space and $H \leq G$ is a subgroup, the fixed point subspace $X^H$ has a natural action by only the normalizer $NH \leq G$.
Of course $H$ acts trivially and so we are left with a natural action by the \emph{Weyl group}
\[ WH = NH/H \cong \textup{Aut}_G(G/H) \]
When $X$ is a $G$-spectrum there are two natural notions of $H$-fixed points, each of which gives a $WH$-spectrum:

\begin{df}
For a $\mathscr J_G$-space $X$ and a subgroup $H \leq G$, the \emph{categorical fixed points} $X^H$ are the $\mathscr J_{WH}$-space which on each $H$-fixed $G$-representation $V \subset U^H \subset U$ is just the fixed points $X(V)^H$.
More simply, if $X$ is an orthogonal $G$-spectrum then $X^H$ is obtained by taking $H$-fixed points levelwise.
\end{df}

\begin{prop}
The categorical fixed points are a Quillen right adjoint from $G$-spectra to $WH$-spectra.
Their right-derived functor is called the genuine fixed points.
\end{prop}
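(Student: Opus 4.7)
The plan is to exhibit a left adjoint $\mathbb{L}$ to $(-)^H$ and then verify the Quillen condition that $(-)^H$ preserves fibrations and acyclic fibrations, using the cofibrant generation in Proposition \ref{prop:model_structure}. The definition of ``genuine fixed points'' as the right-derived functor is then automatic.

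To construct $\mathbb{L}$, I would work through the $\mathscr{J}_G$-space description of orthogonal $G$-spectra given by $\mc I_{\R^\infty}^U$. The functor $X^H$ corresponds to the $\mathscr{J}_{WH}$-space $V \mapsto X(V)^H$ with $V$ an $H$-fixed $G$-representation—equivalently, a $WH$-representation in the sub-universe $U^H$. The left adjoint then splits into three standard pieces: inflation from $WH$-spectra to $NH$-spectra indexed on $U^H$ (with $H$ acting trivially), a change-of-universe extending $U^H$ up to the complete $G$-universe $U$ (as $NH$-universes), and induction $G_+ \sma_{NH} (-)$ to $G$-spectra. Each of these ingredients, together with its Quillen left adjoint property, is developed in \cite{mandell2002equivariant}, so the composite $\mathbb{L}$ is well-defined and adjoint to $(-)^H$ by a routine hom-set computation.

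For the Quillen condition, it suffices to show that $\mathbb{L}$ sends each generating cofibration and each generating acyclic cofibration to a cofibration (respectively, acyclic cofibration) of $G$-spectra. A generating cofibration of $WH$-spectra has the form $F_V((WH/K \times \partial D^k)_+) \to F_V((WH/K \times D^k)_+)$ with $V \subset U^H$ and $K \leq WH$. Tracing this through the three steps above yields $F_V((G/\tilde K \times \partial D^k)_+) \to F_V((G/\tilde K \times D^k)_+)$, where $\tilde K \leq NH \leq G$ is the preimage of $K$ under $NH \twoheadrightarrow WH$ and $V$ is now regarded as a $G$-representation in $U$. This is precisely a generating cofibration in $G$-spectra. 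The argument for acyclic generators is analogous but uses the characterization of stable equivalences via the groups $\pi_k^J$ of Proposition \ref{prop:model_structure}.

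The main obstacle is the last step: the weak equivalences of $G$-spectra are detected by $\pi_*^J$ for \emph{all} closed $J \leq G$ and over the \emph{complete} universe $U$, whereas the adjunction is built out of the sub-universe $U^H$ and the subgroup structure relative to $NH$. Resolving this requires an explicit identification of $(F_V A)^J$ via the double-coset/Wirthm\"uller-type decomposition, verifying that the induction $G_+ \sma_{NH}(-)$ applied to an acyclic cofibration of $NH$-spectra remains an acyclic cofibration of $G$-spectra. Once that point-set identification is in hand, the verification on generating acyclic cofibrations reduces to standard properties of the smash product and the change-of-universe Quillen equivalence.
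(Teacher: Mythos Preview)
The paper gives no proof of this proposition; it is stated as a review of standard material from \cite{mandell2002equivariant}, so there is nothing to compare against directly.

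Your approach is correct and is essentially the standard one: factor the right adjoint $(-)^H$ through restriction $G \to NH$, change of universe $U \to U^H$, and $H$-fixed points $NH \to WH$, and observe that the corresponding left adjoints (induction, change of universe, inflation) are each Quillen left adjoints by results in \cite{mandell2002equivariant}. That is already a complete proof, since a composite of Quillen adjunctions is a Quillen adjunction.

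Where you make life harder than necessary is in paragraphs three and four. Having already noted that each piece is a Quillen left adjoint, you then attempt a direct verification on generating (acyclic) cofibrations and identify a ``main obstacle'' concerning how $NH$-equivalences over $U^H$ relate to $G$-equivalences over $U$. But this obstacle is precisely what is absorbed by the statement that induction $G_+ \sma_{NH}(-)$ and change of universe are Quillen left adjoints in \cite{mandell2002equivariant}; once you cite those results, nothing remains to check. So your argument is not wrong, but the worry in the final paragraph is misplaced, and the direct verification is redundant. If you want a self-contained argument that avoids citing the Quillen property of each piece, then yes, you would need the Wirthm\"uller-type analysis you sketch---but that is exactly what \cite{mandell2002equivariant} already does, and reproducing it here would be out of proportion for a proposition the paper treats as background.
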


\begin{df}\label{df:geometric_fixed_points}
If $X$ is a $\mathscr J_G$-space and $H \leq G$ then the \emph{geometric fixed points} $\Phi^{H} X$ are defined as the coequalizer
\[ \bigvee_{V,W} F_{W^{H}} S^0 \sma \mathscr J_G^{H}(V,W) \sma X(V)^{H} \rightrightarrows \bigvee_V F_{V^{H}} S^0 \sma X(V)^{H}  \ra \Phi^{H} X \]
These are naturally $\mathscr J_{WH}$-spaces on the complete $WH$-universe $U^H$.
\end{df}

\begin{thm}
The geometric fixed points $\Phi^H$ satisfy the following technical properties:
\begin{enumerate}
\item There is a natural isomorphism of $WH$-spectra
\[ \Phi^H F_V A \cong F_{V^H} A^H \]
\item $\Phi^H$ commutes with all coproducts, pushouts along a levelwise closed inclusion, and filtered colimits along levelwise closed inclusions.
\item $\Phi^H$ preserves all cofibrations, acyclic cofibrations, and weak equivalences between cofibrant objects.
\item If $H \leq K \leq G$ then $\Phi^H$ commutes with the change-of-groups from $G$ down to $K$.
\item There is a canonical commutation map
\[ \Phi^G(X \sma Y) \overset\alpha\ra \Phi^G X \sma \Phi^G Y \]
which is an isomorphism when $X$ or $Y$ is cofibrant \cite[A.1]{blumberg2013homotopy}.
\end{enumerate}
\end{thm}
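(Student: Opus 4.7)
The plan is to proceed roughly in the order stated, since (3) will follow from (1) and (2), while (4) and (5) draw on all the earlier parts.

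For (1), I would verify corepresentability: both $\Phi^H F_V A$ and $F_{V^H} A^H$ represent the same functor on $\mathscr J_{WH}$-spaces $Y$, namely $\Hom(A^H, Y(V^H))$. On the right this is the free-evaluation adjunction; on the left it comes from unpacking the coequalizer of Definition \ref{df:geometric_fixed_points} and noting that a map out of $\Phi^H F_V A$ is an $H$-equivariant map from $A$ into the $V$-level of $Y$, which is the same as a map from $A^H$ into $Y(V^H)$. Equivalently, one may substitute $F_V A$ directly into the defining coequalizer and collapse it via a Yoneda argument for $\mathscr J_G^H$-spaces.

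For (2), the two key ingredients are: in compactly generated weak Hausdorff based $G$-spaces, the $H$-fixed-point functor commutes with coproducts, pushouts along a closed inclusion, and filtered colimits along closed inclusions (each is computed at the point-set level, where fixed points are exact for closed subsets); and $F_{V^H}$ is a left adjoint, hence preserves all colimits. The coequalizer defining $\Phi^H$ is therefore a composite of colimit-preserving operations with respect to the stated classes of diagrams.

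Combining (1) and (2), property (3) reduces to checking the generating (acyclic) cofibrations of Proposition \ref{prop:model_structure}: $\Phi^H$ carries $F_V((G/K \times \partial D^k)_+) \to F_V((G/K \times D^k)_+)$ to $F_{V^H}(((G/K)^H \times \partial D^k)_+) \to F_{V^H}(((G/K)^H \times D^k)_+)$, which is a wedge of generating $WH$-cofibrations since $(G/K)^H$ is a disjoint union of $WH$-orbits; an analogous verification handles the horn inclusions appearing in the acyclic generators, and cellular induction via (2) extends to retracts of cell complexes. Ken Brown's lemma then yields the statement on weak equivalences between cofibrant objects. Property (4) is immediate from the construction, since the coequalizer only uses $\mathscr J_G^H(V,W)$ and $X(V)^H$, both of which depend solely on the restriction of the action to $H$; restricting to $K$ first produces the same underlying orthogonal spectrum, with the residual Weyl-group action inherited naturally.

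For (5) I would construct $\alpha$ on $\mathscr J_G$-spaces via the evident natural maps $(X \sma Y)(V \oplus W)^G \to X(V)^G \sma Y(W)^G$ combined with the $F_{V^G \oplus W^G}$ construction, then use (1) to verify $\alpha$ is an isomorphism when $X = F_V A$ for $A$ a $G$-CW complex, where both sides unwind to $F_{V^G} A^G \sma \Phi^G Y$. The general cofibrant case follows by cellular induction based on (2) and (3). The main technical obstacle, carried out in \cite[A.1]{blumberg2013homotopy}, is ensuring that at each stage of the induction the maps obtained after smashing with $Y$ remain levelwise closed inclusions so that (2) applies; this is guaranteed by the pushout-product axiom of the monoidal model structure.
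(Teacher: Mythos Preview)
The paper does not actually prove this theorem: it is stated as a summary of standard facts about $\Phi^H$, with property (5) explicitly attributed to \cite[A.1]{blumberg2013homotopy} and the others drawn implicitly from \cite{mandell2002equivariant} and \cite{hhr}. So there is no argument in the paper to compare against; your sketch is essentially a reconstruction of the standard proofs from those sources, and as such it is broadly correct for (1)--(4).

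There is one slip in your treatment of (5). You propose building $\alpha$ from ``evident natural maps $(X \sma Y)(V \oplus W)^G \to X(V)^G \sma Y(W)^G$,'' but there is no such natural map in that direction. The Day convolution defining $X \sma Y$ gives canonical maps $X(V) \sma Y(W) \to (X \sma Y)(V \oplus W)$, and passing to $G$-fixed points (which is lax monoidal) yields $X(V)^G \sma Y(W)^G \to (X \sma Y)(V \oplus W)^G$. This is what produces the commutation map, and its natural direction is $\Phi^G X \sma \Phi^G Y \to \Phi^G(X \sma Y)$; indeed that is how the paper uses $\alpha$ throughout (see the rigidity theorems in \S\ref{rigidity_section} and the construction of $\overline\alpha$ in \S\ref{cyclotomic}), so the arrow in the theorem statement itself appears to be a typo. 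Once you reverse the direction of your proposed construction, the rest of your cellular-induction argument for the isomorphism on cofibrant inputs is the standard one from \cite{blumberg2013homotopy}.
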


\begin{rmk}
The geometric fixed point functor $\Phi^H$ is not a left adjoint, since it does not commute with all colimits. A simple counterexample with $G = \Z/2$ is given by the suspension spectra of the diagram of spaces
\[ \xymatrix @R=2em @C=2em{
(\Z/2)_+ \ar[r] \ar[d] & ({*})_+ \\
({*})_+ } \]
Therefore $\Phi^H$ is not a Quillen left adjoint. However, since it still preserves weak equivalences between cofibrant orthogonal $G$-spectra, we define a \emph{left-derived geometric fixed point functor} $X \leadsto \Phi^H(cX)$ by composing $\Phi^H$ with a cofibrant replacement functor $c$ in the above model structure.
\end{rmk}

It will be important for us that these derived geometric fixed points measure the weak equivalences of $G$-spectra. This is standard; for instance we can deduce it from \cite[2.52]{hhr} and \cite[3.5(vi), 4.12]{mandell2002equivariant}.

\begin{prop}\label{prop:geometric_fixed_pts_measure_equivs}
A map $X \to Y$ of orthogonal $G$-spectra is a weak equivalence if and only if the induced map of derived geometric fixed points $\Phi^H(cX) \to \Phi^H(cY)$ is an equivalence of spectra for all $H \leq G$.
\end{prop}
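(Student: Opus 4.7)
The plan is to reduce to the case of cofibrant $X$ and $Y$ and then prove the resulting equivalent statement: a map $f \colon X \to Y$ between cofibrant orthogonal $G$-spectra is a weak equivalence if and only if $\Phi^H f$ is a nonequivariant stable equivalence for every closed subgroup $H \leq G$. The reduction is formal: the cofibrant replacements $cX \to X$ and $cY \to Y$ are weak equivalences, so by 2-out-of-3 the map $X \to Y$ is a weak equivalence iff $cX \to cY$ is. Thus we may assume $X$ and $Y$ are cofibrant, and the forward direction is then immediate from property (3) of the theorem on geometric fixed points: $\Phi^H$ preserves weak equivalences between cofibrant objects.

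For the reverse direction, I would use isotropy separation. For each closed subgroup $H \leq G$, let $\mc F[H]$ denote the family of subgroups of $G$ not containing a subconjugate of $H$. There is a cofiber sequence of pointed $G$-spaces
\[ E\mc F[H]_+ \ra S^0 \ra \ti{E\mc F[H]}, \]
and smashing with a cofibrant $X$ yields a cofiber sequence of cofibrant $G$-spectra. The geometric fixed point functor is designed so that $\Phi^H(\ti{E\mc F[H]} \sma X) \simeq \Phi^H X$, while $E\mc F[H]_+ \sma X$ has isotropy contained in $\mc F[H]$ and hence its $H$-equivariant stable homotopy is determined by the $K$-equivariant homotopy of $X$ for $K$ strictly subconjugate to $H$.

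With this tool in hand I would induct on conjugacy classes of closed subgroups of $G$, starting at the trivial subgroup and working upward. For $H = 1$ the statement reduces to the underlying nonequivariant case, which is detected by $\Phi^1 = \id$. At a general $H$, the induction hypothesis implies that $f$ is a $K$-equivalence for all $K$ strictly subconjugate to $H$, so $E\mc F[H]_+ \sma f$ is a $\pi_*^H$-isomorphism; the hypothesis on $\Phi^H$ gives a $\pi_*^H$-isomorphism on $\ti{E\mc F[H]} \sma f$; and the five-lemma applied to the long exact sequences in $\pi_*^H$ finishes the inductive step.

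The main obstacle is arranging the induction cleanly when $G$ is an arbitrary compact Lie group, since the subgroup lattice need not satisfy a descending chain condition. This is handled by the standard device of inducting on the pair $(\dim H, \dim NH/H)$ lexicographically, or equivalently by using well-foundedness of the conjugacy class poset within each dimension stratum. The two ingredients of the argument, namely the isotropy separation formula for $\Phi^H$ and the inductive well-foundedness, are precisely the content of \cite[3.5(vi), 4.12]{mandell2002equivariant} and \cite[2.52]{hhr}, so the writeup amounts to citing these and verifying the five-lemma step.
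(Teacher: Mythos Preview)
Your proposal is correct and is precisely the standard isotropy-separation argument the paper has in mind: the paper gives no proof beyond the remark preceding the proposition that the result is standard and can be deduced from \cite[2.52]{hhr} and \cite[3.5(vi), 4.12]{mandell2002equivariant}, which are exactly the references you invoke. Your sketch simply unpacks how those citations combine via the cofiber sequence $E\mc F[H]_+ \to S^0 \to \ti E\mc F[H]$ and induction on subgroups, so there is nothing to add.
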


As a result, a map of $S^1$-spectra $X \to Y$ is an $\mathcal F$-equivalence if and only if $\Phi^{C_n}(cX) \to \Phi^{C_n}(cY)$ is an equivalence for all $n \geq 1$.

Finally, though it does not seem to appear in the literature, the iterated fixed points map of \cite{blumberg2013homotopy} easily generalizes:
\begin{prop}
If $H \leq K \leq NH \leq G$ then there is a natural iterated fixed points map
\[ \Phi^K X \overset{\textup{it}}\ra \Phi^{K/H} \Phi^H X \]
which is an isomorphism when $X = F_V A$, and therefore an isomorphism on all cofibrant spectra.
When $H$ and $K$ are normal this is a map of $G/K$-spectra.
\end{prop}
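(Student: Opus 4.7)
The plan is to mimic the construction from \cite{blumberg2013homotopy}, which handled a special case. On free spectra, Property (1) of the geometric fixed points theorem gives a natural isomorphism $\Phi^H F_V A \cong F_{V^H} A^H$; since $K \leq NH$, the quotient $K/H$ acts on both $V^H$ and $A^H$, so iterating yields
\[ \Phi^{K/H}\Phi^H F_V A \cong F_{(V^H)^{K/H}}(A^H)^{K/H} = F_{V^K} A^K \cong \Phi^K F_V A. \]
I would take the composite of these canonical identifications as the iterated fixed points map on $F_V A$, which already establishes the isomorphism claim for free spectra.

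To extend to arbitrary $X$, I would use the coequalizer presentation of Definition \ref{df:geometric_fixed_points}. On each wedge summand $F_{V^K} S^0 \sma X(V)^K$ generating $\Phi^K X$, there is a natural map into $\Phi^{K/H}\Phi^H X$ obtained as follows: the fixed subspace $X(V)^K$ equals $(X(V)^H)^{K/H}$ because $K$ normalizes $H$, and the structure map $X(V)^H \to (\Phi^H X)(V^H)$ coming from the coequalizer defining $\Phi^H X$ is $K/H$-equivariant, so its $K/H$-fixed points combined with $V^K = (V^H)^{K/H}$ deliver a map into the corresponding wedge summand of $\Phi^{K/H}\Phi^H X$. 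The main bookkeeping step is verifying that this summand-wise assignment respects the coequalizer relations coming from morphisms in $\mathscr J_G^K(V,W)$; this uses standard identifications between $K$-fixed morphism spaces in $\mathscr J_G$ and morphism spaces in $\mathscr J_{G/H}$ at the $H$-fixed representations $V^H, W^H$.

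To show the resulting natural transformation is an isomorphism on every cofibrant $X$, I would run a cell induction based on Proposition \ref{prop:model_structure}. Every cofibrant spectrum is a retract of a cell complex built from the generators $F_V((G/H' \times D^k)_+)$ via coproducts, pushouts along cofibrations, and sequential colimits along cofibrations. Property (2) ensures that both $\Phi^K$ and $\Phi^H$ commute with such colimits, and Property (3) ensures that $\Phi^H$ takes cofibrant spectra to cofibrant spectra, so $\Phi^{K/H}$ also preserves the relevant colimits of $\Phi^H$-spectra arising in the cell filtration. Combined with the isomorphism already proved on the generating free spectra, this upgrades to an isomorphism on all cofibrant $X$.

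Finally, when $H$ and $K$ are normal in $G$, $\Phi^H X$ carries a natural $G/H$-action, $\Phi^{K/H}\Phi^H X$ thus carries a $(G/H)/(K/H) \cong G/K$-action, and $\Phi^K X$ likewise carries a $G/K$-action. Since every step of the construction is natural in the ambient $G$-action on $X$, the iterated fixed points map is automatically $G/K$-equivariant. The principal obstacle in the plan is the coequalizer-compatibility check in the extension step; the remaining pieces are routine generalizations of the arguments in \cite{blumberg2013homotopy}.
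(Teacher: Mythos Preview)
Your proposal is correct and matches the paper's intended approach. The paper does not actually give a proof of this proposition; it merely states that the iterated fixed points map of \cite{blumberg2013homotopy} ``easily generalizes,'' and your sketch is precisely that generalization spelled out---define the map on free spectra via the canonical identifications $\Phi^K F_V A \cong F_{V^K}A^K \cong \Phi^{K/H}\Phi^H F_V A$, extend to all $X$ through the coequalizer presentation, and then run a cell induction using Property~(2) to get the isomorphism on cofibrant objects.
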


\subsection{The Hill-Hopkins-Ravenel norm isomorphism.}

When $X$ is an orthogonal spectrum, the smash product $X^{\sma n}$ has an action of $C_n \cong \Z/n$ which rotates the factors.
This makes $X^{\sma n}$ into an orthogonal $C_n$-spectrum.
It is natural to guess that the geometric fixed points of this $C_n$-action should be $X$ itself, and in fact there is natural diagonal map
\[ X \overset\Delta\ra \Phi^{C_n} X^{\sma n} \]
When $X$ is cofibrant, this map is an \emph{isomorphism}.
More generally, if $G$ is a finite group, $H \leq G$, and $X$ is an orthogonal $H$-spectrum, we can define a smash product of copies of $X$ indexed by $G$
\[ N^G_H X := \bigwedge_{g_iH \in G/H} (g_iH)_+ \sma_H X \cong \bigwedge^{|G/H|} X \]
This construction is the \emph{multiplicative norm} defined by Hill, Hopkins, and Ravenel.
This can be given a $G$-action, which depends on some fixed choice of representatives $g_i H$ for each left coset of $H$ (cf. \cite{bohmann2014comparison}, \cite{hhr}).
Changing the choice of representatives changes this action, but only up to natural isomorphism.
We therefore implicitly assume that such representatives have been chosen.
The general form of the above observation about $X^{\sma n}$ is then
\begin{thm}\cite[B.209]{hhr}
There is a natural ``diagonal'' map of orthogonal spectra
\[ \Phi^H X \overset\Delta\ra \Phi^G N^G_H X \]
When $X$ is cofibrant, $\Delta$ is an isomorphism.
\end{thm}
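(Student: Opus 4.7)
The plan is to define $\Delta$ explicitly on free $H$-spectra, where both sides admit hands-on formulas, and then extend by cellular induction to all cofibrant $X$, using the preservation properties of $\Phi^H$, $N^G_H$, and $\Phi^G$ reviewed above.

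For $X = F_V A$ with $A$ a based $H$-space and $V$ an $H$-representation, I would first establish the canonical isomorphism $N^G_H F_V A \cong F_{\textup{Ind}_H^G V}(N^G_H A)$, where $\textup{Ind}_H^G V = (G/H)_+ \sma_H V$ and $N^G_H A = \bigwedge_{G/H} A$ carry their norm $G$-actions. Combining this with the free-spectrum formula $\Phi^K F_W B \cong F_{W^K} B^K$ recalled after Def.~\ref{df:geometric_fixed_points}, we obtain
\[ \Phi^G N^G_H F_V A \;\cong\; F_{(\textup{Ind}_H^G V)^G}\bigl((N^G_H A)^G\bigr). \]
An elementary diagonal argument identifies $(\textup{Ind}_H^G V)^G \cong V^H$ and $(N^G_H A)^G \cong A^H$, because a $G$-fixed element of a $(G/H)$-indexed direct sum or smash product must be constant across the indexing set and $H$-fixed on each factor. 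Assembling these identifications yields a canonical isomorphism $\Phi^G N^G_H F_V A \cong F_{V^H}(A^H) \cong \Phi^H F_V A$, which defines $\Delta$ on free spectra and is visibly natural in $(V,A)$.

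To extend $\Delta$ to all cofibrant $X$, I would induct up a standard cellular filtration. Each stage is a pushout along a generating cofibration $F_V((H/K \times \partial D^k)_+) \hookrightarrow F_V((H/K \times D^k)_+)$, followed by filtered colimits along levelwise closed inclusions. Both $\Phi^H$ and $\Phi^G \circ N^G_H$ preserve these operations: for $\Phi^H$ and $\Phi^G$ this is listed among the properties following Def.~\ref{df:geometric_fixed_points}, and for $N^G_H$ it follows from the HHR analysis of the indexed smash product \cite{hhr}. Since $\Delta$ is an isomorphism on each generating cell, and both sides preserve the pushouts and sequential colimits of cell attachments, the maps $\Delta_X$ assemble into a natural isomorphism on cofibrant $X$. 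For arbitrary $X$, a direct construction using the $\mathscr J_G$-space description of $\Phi^G$ and $N^G_H$ produces the natural map $\Delta_X$ (not in general an isomorphism).

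The main obstacle is the colimit behaviour of $N^G_H$: because the norm is only multiplicatively, not additively, functorial, it is not a left adjoint and does not preserve arbitrary colimits. Verifying that $N^G_H$ nonetheless commutes sufficiently well with pushouts along generating cofibrations and with filtered colimits along closed inclusions, compatibly with $\Phi^G$, is the technical heart of Appendix B of \cite{hhr}; once granted, the cellular induction above is routine.
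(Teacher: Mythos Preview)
Your strategy---identify $\Phi^G N^G_H F_V A \cong F_{V^H} A^H$ via the space-level diagonal, then climb cell structures---matches the paper's. The gap is in the order of operations. You propose to \emph{define} $\Delta$ on free spectra and then ``assemble'' it on cofibrant $X$ by cellular induction; but a cofibrant spectrum has many cell structures, and an assignment built cell-by-cell is not obviously independent of that choice, nor is it obviously natural with respect to arbitrary maps (only with respect to the attaching maps you used). You acknowledge this by saying that for general $X$ ``a direct construction using the $\mathscr J_G$-space description'' produces $\Delta_X$, but you do not give that construction, and it is precisely the content of the proof.

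The paper fixes this by running the argument in the opposite order. Every $X$ has a canonical coequalizer presentation $\bigvee F_W S^0 \sma \mathscr J_H(V,W) \sma X(V) \rightrightarrows \bigvee F_V S^0 \sma X(V) \to X$. One applies $\Phi^G N^G_H$ to the whole diagram, uses the commutation of $\Phi^G N^G_H$ with wedges and smashes together with your free-spectrum identification $\Phi^G N^G_H F_V A \cong F_{V^H} A^H$, and recognizes the resulting diagram as the very coequalizer that \emph{defines} $\Phi^H X$. The universal property then produces a map $\Phi^H X \to \Phi^G N^G_H X$ for \emph{every} $X$, natural on the nose. Only after $\Delta$ exists globally does one check it is an isomorphism on free spectra and then induct up the cells---at that stage the induction is merely verifying that a given natural map is an isomorphism, which is unproblematic. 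Your sketch has all the right ingredients; it just needs this coequalizer step to replace the hand-wave about a ``direct construction.''
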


The full proof now appears in \cite{hhr}, but for the reader's convenience we also summarize the proof below.
\begin{proof}
If $A$ is just a based $H$-space, the indexed smash product of $A$ over $G/H$ has fixed points $A^H$:
\[ A^H \congar \left( N^G_H A \right)^G \cong \left( \bigwedge^{|G/H|} A \right)^G \]
Here the map from left to right is the diagonal,
\[ a \in A^H \mapsto (a,\ldots,a) \]

Now suppose $X$ is an orthogonal $H$-spectrum. We start by taking its coequalizer presentation
\[ \bigvee_{V,W} F_W S^0 \sma \mathscr J_H(V,W) \sma X(V) \rightrightarrows \bigvee_V F_V S^0 \sma X(V) \ra X \]
and taking $\Phi^G N^G_H$ of everything in sight.
Since $\Phi^G N^G_H$ commutes with wedges and smashes up to isomorphism, this gives
\[ \bigvee_{V,W} \Phi^G N^G_H F_W S^0 \sma (N^G_H \mathscr J_H(V,W))^G \sma (N^G_H X(V))^G \rightrightarrows \bigvee_V \Phi^G N^G_H F_V S^0 \sma (N^G_H X(V))^G \]
\[ \ra \Phi^G N^G_H X \]
which simplifies to
\[ \bigvee_{V,W} \Phi^G N^G_H F_W S^0 \sma \mathscr J_H^H(V,W) \sma X(V)^H \rightrightarrows \bigvee_V \Phi^G N^G_H F_V S^0 \sma X(V)^H \ra \Phi^G N^G_H X \]
As a diagram, this is no longer guaranteed to be a coequalizer system, but it still commutes.
We can simplify using the string of isomorphisms
\begin{eqnarray*}
\Phi^G N^G_H F_V A &\cong & \Phi^G F_{\textup{Ind}^G_H V} (N^G_H A) \\
 &\cong & F_{(\textup{Ind}^G_H V)^G} (N^G_H A)^G \\
 &\cong & F_{V^H} A^H
\end{eqnarray*}
for any based $H$-space $A$ and $H$-representation $V$.
This gives
\[ \bigvee_{V,W} F_{W^H} S^0 \sma \mathscr J_H^H(V,W) \sma X(V)^H \rightrightarrows \bigvee_V F_{V^H} S^0 \sma X(V)^H \ra \Phi^G N^G_H X \]
and the coequalizer of the first two terms is exactly $\Phi^H X$.
The universal property of the coequalizer then gives us a map
\[ \Phi^H X \ra \Phi^G N^G_H X \]
and we take this as the definition of the diagonal map.

Now consider the special case when $X = F_V A$.
The inclusion of the term
\[ F_{V^H} S^0 \sma A^H \]
into the above coequalizer system maps forward isomorphically to $\Phi^H X$, and so we can evaluate the diagonal map by just examining this term.
But back at the top of our proof, the inclusion of the term
\[ \Phi^G N^G_H F_V S^0 \sma (N^G_H A)^G \]
also maps forward isomorphically to $\Phi^G N^G_H X$.
Therefore up to isomorphism, the diagonal map becomes the string of maps we used to connect $F_{V^H} S^0 \sma A^H$ to $\Phi^G N^G_H F_V S^0 \sma (N^G_H A)^G$, but these maps were all isomorphisms.
Therefore the diagonal is an isomorphism when $X = F_V A$.
It is straightforward to verify that both sides preserve coproducts, pushouts along $h$-cofibrations, and sequential colimits along $h$-cofibrations, so by induction, the diagonal is an isomorphism for all cofibrant $X$.
\end{proof}

\subsection{A rigidity theorem for geometric fixed points}\label{rigidity_section}

Let $G$ be a compact Lie group. We will prove that the geometric fixed point functor $\Phi^G$ is \emph{rigid}, in the sense that it admits very few point-set level natural transformations into other functors. Let $G\cat{Sp}^O$ denote the category of orthogonal $G$-spectra and $G$-equivariant maps between them. Let $\cat{Free}$ be the full subcategory on the free spectra $F_V A$, for all $G$-representations $V$ and based $G$-spaces $A$.
Let
\[ \sma \circ (\Phi^G,\ldots,\Phi^G) : \prod^k \cat{Free} \ra \cat{Sp}^O \]
denote the composite of the geometric fixed points and the $k$-fold smash product, with $k \geq 1$.

\begin{prop}
The only endomorphisms of $\sma \circ (\Phi^G)^k$ are zero and the identity.
\end{prop}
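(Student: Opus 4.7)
Plan. A natural endomorphism $\eta$ of $\sma\circ(\Phi^G)^k$ on $\prod^k\cat{Free}$ will be reduced in two stages. Writing $W=V_1^G\oplus\cdots\oplus V_k^G$ for a tuple $V=(V_1,\ldots,V_k)$ of $G$-representations, the functor sends $(F_{V_1}A_1,\ldots,F_{V_k}A_k)$ to $F_W(A_1^G\sma\cdots\sma A_k^G)$. The plan is first to show, using naturality in the $A_i$, that $\eta$ is determined by a single ``scalar'' $\phi_V\in\mathscr J(W,W)=O(W)_+$ attached to each $V$; then to show, using naturality in $V$, that either $\phi_V=\ast$ for all $V$ (so $\eta=0$) or $\phi_V=\id$ for all $V$ (so $\eta=\id$).

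Reduction to a scalar. By the free-spectrum adjunction, a self-map of $F_W(B)$ is a non-equivariant based map $B\to O(W)_+\sma B$. Fix $V$ and restrict the $A_i$ to carry the trivial $G$-action; naturality with respect to arbitrary based maps $A_i\to A_i'$ then presents $\eta$ as a natural transformation between the $k$-fold smash functor and its smash with $O(W)_+$. A standard Yoneda argument (specialize all $A_i=S^0$) identifies any such natural transformation with a single element $\phi_V\in O(W)_+$, via the rule $a_1\sma\cdots\sma a_k\mapsto \phi_V\sma a_1\sma\cdots\sma a_k$. For general $G$-actions on the $A_i$, the same formula is propagated by naturality with respect to the $G$-equivariant morphisms $F_{V_i}S^0\to F_{V_i}A_i$ indexed by fixed points $a_i\in A_i^G$, so $\eta$ is completely determined by the family $\{\phi_V\}$.

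Pinning down $\phi_V$. A morphism $(V_i')\to(V_i)$ in $\prod^k\cat{Free}$ with $A_i=S^0$ is a $k$-tuple of $G$-fixed elements $[f_i,x_i]\in\mathscr J_G(V_i,V_i')^G$; under $\Phi^G$ and the smash product this becomes the orthogonal-spectrum morphism $F_{W'}S^0\to F_W S^0$ represented by $[\oplus_i f_i^G,\oplus_i x_i]\in\mathscr J(W,W')$, with $W':=\oplus_i V_i'^G$. Using the $\mathscr J$-composition law $[g,y]\circ[f,x]=[gf,gx+y]$ (and remembering that $F_V$ is contravariant in $V$, so spectrum composition is reversed in $\mathscr J$), the naturality square for $\eta$ along this morphism unpacks to the pair
\begin{align*}
(\oplus_i f_i^G)\circ\phi_V &= \phi_{V'}\circ(\oplus_i f_i^G),\\
\oplus_i x_i &= \phi_{V'}(\oplus_i x_i).
\end{align*}
Specializing to $V=0$ and the morphisms $F_{V_i'}S^0\to F_0 S^0$ given by $[0,x_i]\in\mathscr J_G(0,V_i')^G$, with $x_i\in V_i'^G$ arbitrary, the second identity forces any non-basepoint $\phi_{V'}$ to fix every vector of $W'$, so $\phi_{V'}=\id$ whenever $\phi_0=\id_0$. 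Conversely, if $\phi_0=\ast$ then the left side of the same square is $\ast$ while the right side computes to $[0,\phi_{V'}(\oplus_i x_i)]$, a non-basepoint element of $\mathscr J(0,W')=W'_+$ unless $\phi_{V'}=\ast$; so in this case $\phi_{V'}=\ast$ for all $V'$. Hence $\eta\in\{0,\id\}$.

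Main obstacle. The key technical subtlety is the bookkeeping of two opposite conventions --- the contravariance of $F_V$ in $V$, which reverses the order of spectrum composition relative to $\mathscr J$, and the distinction between the zero vector inside a Thom space and the disjoint basepoint adjoined to it. Once these are tracked carefully, the naturality analysis is routine, and the dichotomy above rules out any ``mixed'' family of $\phi_V$ (identity at some $V$, basepoint at others) as inconsistent with the naturality squares linking different representations.
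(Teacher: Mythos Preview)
Your proof is correct and follows essentially the same approach as the paper's. Both arguments use the same two naturality moves: maps $F_{V_i}S^0\to F_0S^0$ indexed by points of $S^{V_i^G}$ to pin down the element of $O(W)_+$, and point-inclusions $F_{V_i}S^0\to F_{V_i}A_i$ indexed by $a_i\in A_i^G$ to handle general $A_i$. The only difference is organizational---you first reduce to a family of scalars $\{\phi_V\}$ and then determine them, whereas the paper works outward from $(F_0S^0,\ldots,F_0S^0)$ one step at a time---but the content is the same.
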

\begin{proof}
Consider a natural transformation $T: \sma \circ (\Phi^G)^k \to \sma \circ (\Phi^G)^k$.
On $(F_0 S^0,F_0 S^0, \ldots, F_0 S^0)$, $T$ gives a map of spectra
\[ F_0 S^0 \ra F_0 S^0 \]
which is determined by at level 0 a choice of point in $S^0$.
So there are only two such maps, the identity and zero.

Assume that $T$ is the identity on this object.
Then consider $T$ on $(F_{V_1} S^0,F_{V_2} S^0, \ldots, F_{V_k} S^0)$:
\[ F_{V_1^G \oplus V_2^G \oplus \ldots \oplus V_k^G} S^0 \ra F_{V_1^G \oplus V_2^G \oplus \ldots \oplus V_k^G} S^0 \]
Let $m_i := \dim V_i^G$ and fix an isomorphism between $\R^{m_i}$ and $V_i^G$.
The above map is determined by what it does at level $m_1 + \ldots + m_k$:
\[ O(m_1 + \ldots + m_k)_+ \ra O(m_1 + \ldots + m_k)_+ \]
This map, in turn, is determined by the image of the identity point, which is some element $P \in O(m_1 + \ldots + m_k)_+$.
Now for any point $(t_1,\ldots,t_k) \in S^{m_1} \sma \ldots \sma S^{m_k}$ we can choose maps of spectra $F_{V_i} S^0 \to F_0 S^0$ which at level $V_i$ send the non-basepoint of $S^0$ to the point $t_i \in S^{m_i} \cong (S^{V_i})^G$.
Since $T$ is a natural transformation, this square commutes for all choices of $(t_1,\ldots,t_k)$:
\[ \xymatrix{
O(m_1 + \ldots + m_k)_+ \ar[r]^-{\cdot P} \ar[d]^-{\textup{ev}_{(t_1,\ldots,t_k)}} & O(m_1 + \ldots + m_k)_+ \ar[d]^-{\textup{ev}_{(t_1,\ldots,t_k)}} \\
S^{m_1 + \ldots + m_k} \ar[r]^-\id & S^{m_1 + \ldots + m_k} } \]
Since $O(m_1 + \ldots + m_k)$ acts faithfully on the sphere $S^{m_1 + \ldots + m_k}$, we must have $P = \id$. Therefore our natural transformation $T$ acts as the identity on the $k$-tuple of spectra $(F_{V_1} S^0,F_{V_2} S^0, \ldots, F_{V_k} S^0)$.

Finally let $A_1, \ldots, A_k$ be a sequence of $G$-spaces and consider $T$ on $(F_{V_1} A_1, \ldots, F_{V_k} A_k)$.
Each collection of choices of point $a_i \in A_i^G$ gives a sequence of maps $F_{V_i} S^0 \to F_{V_i} A_i$, and applying $T$ to this sequence of maps gives a commuting square
\[ \xymatrix{
F_{V_1^G \oplus \ldots \oplus V_k^G} S^0 \sma \ldots \sma S^0 \ar[r]^-\id \ar[d]^-{F_{\ldots}(a_1,\ldots,a_k)} & F_{V_1^G \oplus \ldots \oplus V_k^G} S^0 \sma \ldots \sma S^0 \ar[d]^-{F_{\ldots}(a_1,\ldots,a_k)} \\
F_{V_1^G \oplus \ldots \oplus V_k^G} A_1^G \sma \ldots \sma A_k^G \ar[r]^-{T} & F_{V_1^G \oplus \ldots \oplus V_k^G} A_1^G \sma \ldots \sma A_k^G } \]
From inspection of level $m_1 + \ldots + m_k$, the bottom map must be the identity on the point $\id \sma (a_1,\ldots,a_k)$.
But this is true for all $(a_1,\ldots,a_k)$ and so the bottom map is the identity.
Therefore $T$ is the identity on $(F_{V_1} A_1, \ldots, F_{V_k} A_k)$, so it is the identity on every object in $\prod^k \cat{Free}$.

For the second case, we assume $T$ is zero on $(F_0 S^0,\ldots, F_0 S^0)$ and follow the same steps as before, concluding that $T$ is zero on $(F_{V_1} S^0, \ldots, F_{V_k} S^0)$ and then it is zero on $(F_{V_1} A_1, \ldots, F_{V_k} A_k)$.
\end{proof}

To derive corollaries, we say that a functor $\phi: \prod^k G\cat{Sp}^O \to \cat{Sp}^O$ is \emph{rigid} if restricting to the subcategory $\prod^k \cat{Free}$ gives an injective map on natural transformations out of $\phi$.
In other words, a natural transformation out of $\phi$ is determined by its behavior on the subcategory $\cat{Free}$.
\begin{cor}
If $\phi_1$ and $\phi_2$ are functors $\prod^k G\cat{Sp}^O \to \cat{Sp}^O$ which when restricted to the subcategory $\prod^k \cat{Free}$ are separately isomorphic to $\sma \circ (\Phi^G)^k$, and $\phi_1$ is rigid, then there is at most one nonzero natural transformation $\phi_1 \to \phi_2$.
\end{cor}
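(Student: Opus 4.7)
The plan is to reduce everything to the preceding proposition by transporting a given natural transformation across the two isomorphisms provided by the hypothesis. Fix natural isomorphisms $\alpha_i \colon \phi_i|_{\prod^k \cat{Free}} \congar \sma \circ (\Phi^G, \ldots, \Phi^G)$ for $i = 1, 2$. Given any natural transformation $T \colon \phi_1 \ra \phi_2$, I would restrict $T$ to $\prod^k \cat{Free}$ and conjugate by the $\alpha_i$ to produce an endomorphism $\widetilde T := \alpha_2 \circ (T|_{\prod^k \cat{Free}}) \circ \alpha_1^{-1}$ of $\sma \circ (\Phi^G)^k$ on the subcategory $\prod^k \cat{Free}$.

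By the proposition, $\widetilde T$ is either zero or the identity, so these are the only two possibilities for $T|_{\prod^k \cat{Free}}$, namely $0$ and $\alpha_2^{-1} \circ \alpha_1$. In the first case, $T|_{\prod^k \cat{Free}} = 0$, and rigidity of $\phi_1$ — the hypothesis that restriction to $\prod^k \cat{Free}$ is injective on natural transformations out of $\phi_1$ — forces $T$ itself to be zero. In the second case, $T|_{\prod^k \cat{Free}}$ equals the specific transformation $\alpha_2^{-1} \circ \alpha_1$, and the same rigidity property implies that $T$ is uniquely determined by this restriction. Consequently there are at most two natural transformations $\phi_1 \ra \phi_2$, namely zero and at most one other, which yields the corollary.

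The only point requiring a small amount of care is that rigidity is applied to the source $\phi_1$ and not to the target: what is needed is that two natural transformations out of $\phi_1$ which agree on $\prod^k \cat{Free}$ must agree globally, which is exactly the hypothesis. Apart from this bookkeeping the argument is entirely formal once the proposition is in hand, so I do not anticipate any genuine obstacle.
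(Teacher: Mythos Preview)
Your proposal is correct and is precisely the argument the paper has in mind; the corollary is stated without proof immediately after the proposition and the definition of ``rigid,'' and you have filled in exactly the intended formal deduction. There is nothing to add.
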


% It is an easy exercise that any colimit-preserving functor is rigid.
The example we are interested in is the smash product of geometric fixed points.
\begin{prop}
$\sma \circ (\Phi^G,\ldots,\Phi^G)$ is a rigid functor.
\end{prop}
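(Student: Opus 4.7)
The strategy is to show that every point of $(\Phi^G X_1 \sma \cdots \sma \Phi^G X_k)_n$ lies in the image of a map of orthogonal spectra induced by a $k$-tuple of morphisms from free $G$-spectra into $(X_1, \ldots, X_k)$. Naturality will then force any two transformations out of $\sma \circ (\Phi^G)^k$ that agree on $\prod^k \cat{Free}$ to agree on every $k$-tuple.

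First I unpack Definition \ref{df:geometric_fixed_points} to see that any point $y \in (\Phi^G X)_m$ is represented by a $G$-representation $V$, an element $\omega \in \mathscr J_G(V^G, \R^m)$, and a $G$-fixed point $x \in X(V)^G$. The fixed point $x$ is adjoint to a morphism $\hat x \colon F_V S^0 \to X$ of $G$-spectra, and $y$ is recovered as the image of the class of $\omega$ under the composite
\[ F_{V^G} S^0 \cong \Phi^G F_V S^0 \xrightarrow{\Phi^G \hat x} \Phi^G X. \]
Next, the coequalizer/Day convolution description of the smash product of orthogonal spectra shows that any point $p \in (\Phi^G X_1 \sma \cdots \sma \Phi^G X_k)_n$ is represented by integers $n_1, \ldots, n_k$, a gluing datum in the orthogonal mapping space $\mathscr J_G(\R^{n_1} \oplus \cdots \oplus \R^{n_k}, \R^n)$, and points $y_i \in (\Phi^G X_i)_{n_i}$. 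Combining with the previous observation, $p$ is the image of a universal point $p'$ in $\sma \circ (\Phi^G)^k(F_{V_1} S^0, \ldots, F_{V_k} S^0)$ under $\sma \circ (\Phi^G)^k(\hat x_1, \ldots, \hat x_k)$.

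Given $\eta_1, \eta_2 \colon \sma \circ (\Phi^G)^k \to F$ that agree on $\prod^k \cat{Free}$, naturality with respect to $(\hat x_1, \ldots, \hat x_k)$ yields
\[ \eta_j(X_1, \ldots, X_k)(p) = F(\hat x_1, \ldots, \hat x_k)\bigl(\eta_j(F_{V_1} S^0, \ldots, F_{V_k} S^0)(p')\bigr) \]
for $j = 1, 2$, and the right-hand sides coincide by hypothesis. Since $n$ and $p$ were arbitrary, $\eta_1 = \eta_2$. The main bookkeeping is in unwinding the coequalizer definition of $\Phi^G$ and the smash product formula to produce the universal point $p'$; everything else is formal naturality.
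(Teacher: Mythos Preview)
Your proof is correct and follows essentially the same strategy as the paper's: both arguments establish that every point of $\Phi^G X_1 \sma \cdots \sma \Phi^G X_k$ lies in the image of $\sma \circ (\Phi^G)^k$ applied to a morphism from a $k$-tuple of free spectra, so naturality forces any two transformations agreeing on $\prod^k \cat{Free}$ to agree everywhere. The only difference is packaging: the paper bundles all such morphisms into a single levelwise surjection $\phi(\xi,\ldots,\xi)$ with $\xi\colon \bigvee_V F_V X(V) \to X$, whereas you work pointwise, producing for each point $p$ a single morphism $(\hat x_1,\ldots,\hat x_k)$ from $(F_{V_1}S^0,\ldots,F_{V_k}S^0)$ whose image contains $p$. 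Your version is slightly more direct in that it avoids discussing wedges of free spectra and the attendant bookkeeping; the paper's version makes the surjectivity statement explicit as a single map. One small notational point: the gluing datum for the smash product lives in the nonequivariant $\mathscr J(\R^{n_1}\oplus\cdots\oplus\R^{n_k},\R^n)$ rather than $\mathscr J_G$, since the $\Phi^G X_i$ are ordinary orthogonal spectra, but this is harmless here.
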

\begin{proof}
For any orthogonal $G$-spectrum $X$, let $\xi$ denote the map
\[ \xi: \bigvee_{V \subset U} F_V X(V) \ra X \]
whose $V$th summand is adjoint to the identity map of $X(V)$. It suffices to show that $\sma \circ (\Phi^G,\ldots,\Phi^G)$ takes $(\xi,\ldots,\xi)$ to a map of orthogonal spectra that is surjective on every spectrum level. We will describe this in detail in the case of $k = 2$, $(X,Y) \leadsto \Phi^G X \sma \Phi^G Y$, but the other cases are similar.

The smash product commutes with colimits in each variable, and this gives a definition of $\Phi^G X \sma \Phi^G Y$ as a colimit of a diagram with four terms. We rearrange this into a single coequalizer diagram and conclude that there is a natural levelwise surjection of spectra 
\[ \bigvee_{V',W' \subset U} F_{V'^G} X(V')^G \sma F_{W'^G} Y(W')^G \ra \Phi^G X \sma \Phi^G Y \]
for all orthogonal $G$-spectra $X$ and $Y$. Applying this construction to $(\xi, \xi)$ gives a commuting square
\[ \xymatrix @C=0em{
\bigvee_{V,W \subset U} \Phi^G F_{V}X(V) \sma \Phi^G F_{W}Y(W) \ar[r]^-{\Phi^G (\xi) \sma \Phi^G(\xi)} & \Phi^G X \sma \Phi^G Y \\
Z \ar@{->>}[u] \ar[r] & \bigvee_{V',W' \subset U} F_{V'^G} X(V')^G \sma F_{W'^G} Y(W')^G \ar@{->>}[u] 
} \]
\[ Z = \bigvee_{V',W',V,W \subset U} F_{V'^G} [\mathscr J_G(V,V') \sma X(V)]^G \sma F_{W'^G} [\mathscr J_G(W,W') \sma Y(W)]^G \]
in which the vertical maps are levelwise surjections. We wish to show that $\Phi^G (\xi) \sma \Phi^G(\xi)$ is surjective, and for this it suffices to show that the bottom horizontal map is surjective. This follows by examining the summands where $V = V'$ and $W = W'$, and noting that the action map $O(V)_+ \sma X(V) \to X(V)$ is surjective on the $G$-fixed points. (Alternatively, one can show that the top horizontal and right vertical maps may be identified by a homeomorphism.)
\end{proof}

% \begin{rmk}
% The author does not know at the moment whether $\Phi^G \circ \sma$ is a rigid functor in general.
% It is not trivally true because fixed points do not commute with coequalizers.
% The issue is whether the fixed points of $(X \sma Y)(V)$ are covered by the fixed points of $F_{V'} X(V')$ and $F_W Y(W)$ for varying $V'$ and $W$.
% \end{rmk}

As a result, we get new rigidity statements for the maps relating geometric fixed points and smash powers:
\begin{thm}
Let $X$ and $Y$ denote arbitrary $G$-spectra.
Then the commutation map
\[ \Phi^G X \sma \Phi^G Y \overset{\alpha}\ra \Phi^G (X \sma Y) \]
is the only nonzero natural transformation from $\Phi^G X \sma \Phi^G Y$ to $\Phi^G (X \sma Y)$.
\end{thm}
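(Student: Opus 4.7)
My plan is to deduce this statement directly from the rigidity corollary proved just above, applied with $k=2$, $\phi_1(X,Y) = \Phi^G X \sma \Phi^G Y$, and $\phi_2(X,Y) = \Phi^G(X \sma Y)$. The corollary requires three inputs: that $\phi_1$ be rigid, that $\phi_1$ and $\phi_2$ each be isomorphic to $\sma \circ (\Phi^G,\Phi^G)$ on the subcategory $\cat{Free} \times \cat{Free}$, and then (to produce something rather than nothing) that $\alpha$ itself is nonzero.

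First I would note that $\phi_1 = \sma \circ (\Phi^G,\Phi^G)$ on the nose, so the hypothesis on $\phi_1$ is tautological, and the previous proposition already shows that $\phi_1$ is rigid. For $\phi_2$, I would appeal to property (5) of geometric fixed points recalled earlier: the commutation map $\alpha$ is itself an isomorphism whenever $X$ or $Y$ is cofibrant. Since every object of $\cat{Free}$ is of the form $F_V A$ and is in particular cofibrant, the map $\alpha$ restricts to a natural isomorphism $\phi_1|_{\cat{Free}\times\cat{Free}} \congar \phi_2|_{\cat{Free}\times\cat{Free}}$. This verifies the remaining hypothesis of the corollary, yielding that there is at most one nonzero natural transformation $\phi_1 \to \phi_2$.

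It remains to confirm that $\alpha$ is nonzero, so that the unique nonzero transformation is $\alpha$ and not something else. For this I would evaluate $\alpha$ on $(F_0 S^0, F_0 S^0)$: both sides are isomorphic to $F_0 S^0$, and on this pair $\alpha$ is an isomorphism by (5), hence certainly nonzero. Combining these steps gives the theorem.

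I do not expect any serious obstacle here, since all the real work has already been done in setting up the rigidity machinery. The only point that deserves a moment of care is the verification that the $\alpha$ supplied by \cite{blumberg2013homotopy} genuinely assembles into a natural transformation of functors on all of $G\cat{Sp}^O \times G\cat{Sp}^O$ (not only on cofibrant pairs), so that applying the corollary is legitimate; this is immediate from the point-set definition of $\alpha$ via the universal property of the coequalizer defining $\Phi^G$.
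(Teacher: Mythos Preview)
Your proposal is correct and is exactly the argument the paper intends; the theorem is stated without proof precisely because it follows immediately from the rigidity corollary and the proposition that $\sma \circ (\Phi^G,\ldots,\Phi^G)$ is rigid, just as you outline.

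One small technical point: you assert that every object $F_V A$ of $\cat{Free}$ is cofibrant, but in the model structure of Prop.~\ref{prop:model_structure} this holds only when $A$ is a cofibrant $G$-space, whereas $\cat{Free}$ allows arbitrary based $G$-spaces $A$. This does not affect the argument, since the needed fact is simply that $\alpha$ is an isomorphism on pairs of free spectra, and that follows directly from property (1) together with the identification $F_V A \sma F_W B \cong F_{V \oplus W}(A \sma B)$, without invoking cofibrancy.
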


\begin{rmk}
If $X$ and $Y$ are $G$-spectra and $H \leq G$ then there is more than one natural map
\[ \Phi^H X \sma \Phi^H Y \ra \Phi^H (X \sma Y). \]
Indeed, we could take any element $g$ in the center $Z(G)$, and post-compose $\alpha_H$ with the map $\mc I_{\R^\infty}^U g$ that acts on the trivial-representation levels by the action of $g$.
However $\alpha_H$ is the only natural transformation that respects the forgetful functor to $H$-spectra.
In other words, it is the only one that is natural with respect to all of the $H$-equivariant maps of spectra, and not just the $G$-equivariant ones.
Similar considerations apply to the iterated fixed points map below.
\end{rmk}

\begin{thm}
Let $G$ be a finite group, and let $X$ denote an arbitrary $H$-spectrum with $H \leq G$.
Then the Hill-Hopkins-Ravenel diagonal map
\[ \Phi^H X \overset{\Delta}\ra \Phi^G N^G_H X \]
is the only such map that is both natural and nonzero.
\end{thm}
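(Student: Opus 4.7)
The plan is to invoke the rigidity corollary from the previous subsection with the ambient group taken to be $H$ and the arity $k=1$. Regard $\phi_1(X) = \Phi^H X$ and $\phi_2(X) = \Phi^G N^G_H X$ as functors from $H$-spectra to orthogonal spectra; showing that at most one nonzero natural transformation $\phi_1 \to \phi_2$ exists will immediately pin down $\Delta$, since $\Delta$ is visibly nonzero on $F_0 S^0$, where it is the identity map of $F_0 S^0$.

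First I would verify that $\phi_1 = \Phi^H$ is rigid. The argument that established rigidity of $\sma \circ (\Phi^G,\ldots,\Phi^G)$ specializes verbatim to $k=1$ with $H$ in place of $G$: the evaluation map $\xi : \bigvee_V F_V X(V) \to X$ is $H$-equivariantly surjective on each $H$-fixed subspace $X(V)^H$, so $\Phi^H(\xi)$ is a levelwise surjection of spectra, and any natural transformation out of $\Phi^H$ is therefore determined by its restriction to the subcategory $\cat{Free}$ of free $H$-spectra.

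Next I would check that $\phi_1$ and $\phi_2$ agree on $\cat{Free}$ up to a natural isomorphism. For $\phi_1$, the identification $\Phi^H F_V A \cong F_{V^H} A^H$ is one of the basic technical properties of geometric fixed points stated earlier. For $\phi_2$, the composite
\[ \Phi^G N^G_H F_V A \cong \Phi^G F_{\textup{Ind}^G_H V}(N^G_H A) \cong F_{(\textup{Ind}^G_H V)^G}(N^G_H A)^G \cong F_{V^H} A^H \]
is exactly the string of isomorphisms used inside the proof of the HHR diagonal theorem, and by construction $\Delta$ itself realizes this isomorphism on free spectra. The rigidity corollary then applies and yields at most one nonzero natural transformation $\phi_1 \to \phi_2$. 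Since $\Delta$ is such a transformation, it must be the unique one.

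The only real subtlety is bookkeeping: naturality throughout must be understood with respect to all $H$-equivariant maps of spectra, and the coset representatives used to define $N^G_H$ should be fixed once and for all so that $\phi_2$ is a genuine functor on $H\cat{Sp}^O$ rather than one defined only up to canonical isomorphism. With these conventions in force, as in the discussion preceding the HHR theorem, no additional ambiguity arises, and the uniqueness claim follows.
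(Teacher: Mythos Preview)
Your proposal is correct and is exactly the argument the paper intends: the theorem is stated immediately after the rigidity corollary as one of its direct applications, and you have filled in precisely the verification needed, namely that $\Phi^H$ is rigid (the $k=1$ case of the proposition on $\sma\circ(\Phi^G)^k$, with ambient group $H$) and that both $\Phi^H$ and $\Phi^G N^G_H$ restrict to $F_V A \mapsto F_{V^H} A^H$ on free $H$-spectra via the isomorphisms already exhibited in the proof of the HHR diagonal theorem. Your remarks about fixing coset representatives and working with $H$-equivariant naturality match the conventions the paper adopts.
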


\begin{thm}
If $X$ is a $G$-spectrum and $N \leq G$ is a normal subgroup, then the iterated fixed points map
\[ \Phi^G X \overset{\textup{it}}\ra \Phi^{G/N} \Phi^N X \]
is characterized by the property that it is natural in $X$ and nonzero.
\end{thm}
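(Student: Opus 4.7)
The plan is to apply the rigidity framework of the preceding corollary to the pair of functors $\phi_1 = \Phi^G$ and $\phi_2 = \Phi^{G/N} \circ \Phi^N$, both regarded as functors $G\cat{Sp}^O \to \cat{Sp}^O$ (in the case $k=1$). The corollary will then deliver at most one nonzero natural transformation $\phi_1 \to \phi_2$, and the iterated fixed points map $\textup{it}$ will furnish such a transformation.

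First, rigidity of $\phi_1 = \Phi^G$ is the $k=1$ case of the proposition established earlier in this section, so there is nothing new to verify on this point. What does need to be checked is that both $\phi_1$ and $\phi_2$, when restricted to the subcategory $\cat{Free}$ of free orthogonal $G$-spectra, are naturally isomorphic to $\Phi^G$. For $\phi_1$ this is tautological. For $\phi_2$, iterating property (1) of geometric fixed points gives
\[ \Phi^{G/N} \Phi^N F_V A \;\cong\; \Phi^{G/N} F_{V^N} A^N \;\cong\; F_{(V^N)^{G/N}} (A^N)^{G/N} \;=\; F_{V^G} A^G \;\cong\; \Phi^G F_V A, \]
where the middle equality uses the elementary identities $(V^N)^{G/N} = V^G$ and $(A^N)^{G/N} = A^G$ (iterated fixed points for a normal subgroup agree with the fixed points of the whole group). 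This composite natural isomorphism restricted to $\cat{Free}$ is precisely $\textup{it}$ evaluated on free spectra, so in particular $\textup{it}$ is nonzero there.

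The corollary now tells us there are at most two natural transformations $\Phi^G \to \Phi^{G/N}\Phi^N$: the zero map, and one whose restriction to $\cat{Free}$ corresponds to the identity endomorphism of $\Phi^G$ under our chosen isomorphisms. The iterated fixed points map, constructed in the proposition at the end of Section \ref{gspectra_basics}, is a natural transformation and is an isomorphism (hence nonzero) on each $F_V A$, so it must coincide with the unique nonzero possibility, and any other nonzero natural transformation equals $\textup{it}$. The main subtlety I anticipate is purely bookkeeping: one has to track that the equivariant structure placed on $\Phi^N F_V A$ by property (1) really is the $G/N$-equivariant free spectrum $F_{V^N} A^N$, so that applying $\Phi^{G/N}$ yields what is claimed. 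This is formal once the universes and residual group actions at each stage are kept straight, so no serious obstacle arises and the statement is a direct consequence of the rigidity proposition.
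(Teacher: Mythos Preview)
Your proposal is correct and follows exactly the approach the paper intends: apply the corollary with $k=1$, $\phi_1 = \Phi^G$ (rigid by the preceding proposition), and $\phi_2 = \Phi^{G/N}\Phi^N$, verifying via property (1) of geometric fixed points that $\phi_2$ restricts on $\cat{Free}$ to a functor isomorphic to $\Phi^G$. The paper itself leaves this proof implicit, simply listing the theorem among the consequences of the rigidity framework, so you have accurately filled in the intended details.
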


We end with five more corollaries, which served as the motivation for the rigidity result.
The first corollary is the most important for our work on tensors and duals of cyclotomic spectra.
\begin{prop}\label{smash_iterated_commute}
If $X$ and $Y$ are a $G$-spectra and $N \leq G$ is a normal subgroup, then the following rectangle commutes:
\[ \xymatrix @C=4em{
\Phi^G X \sma \Phi^G Y \ar[d]^-{\textup{it} \sma \textup{it}} \ar[rr]^-{\alpha_G}
 && \Phi^G (X \sma Y) \ar[d]^-{\textup{it}} \\
\Phi^{G/N} \Phi^N X \sma \Phi^{G/N} \Phi^N Y \ar[r]^-{\alpha_{G/N}}
 & \Phi^{G/N} (\Phi^N X \sma \Phi^N Y) \ar[r]^-{\Phi^{G/N} \alpha_N}
 & \Phi^{G/N} \Phi^N (X \sma Y)
} \]
\end{prop}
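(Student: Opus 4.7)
The approach is to invoke the rigidity corollary. Both compositions around the rectangle define natural transformations from the functor $\phi_1(X,Y) := \Phi^G X \sma \Phi^G Y$ to the functor $\phi_2(X,Y) := \Phi^{G/N} \Phi^N (X \sma Y)$. We already know that $\phi_1 = \sma \circ (\Phi^G, \Phi^G)$ is rigid, so once we check that $\phi_2$ restricted to pairs of free spectra is naturally isomorphic to $\phi_1$ restricted to pairs of free spectra, and that both compositions in the rectangle are nonzero on this subcategory, the uniqueness statement in the rigidity corollary forces them to coincide.

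For the first check, I would evaluate $\phi_2$ on a pair of free spectra $(F_V A, F_W B)$. Using $F_V A \sma F_W B \cong F_{V \oplus W}(A \sma B)$, the natural isomorphism $\Phi^H F_U C \cong F_{U^H} C^H$ (applied first with $H = N \leq G$ and then with $H = G/N$ on the resulting $G/N$-spectrum), and the identity $(U^N)^{G/N} = U^G$ for a $G$-representation $U$, one obtains a natural chain of isomorphisms
\[ \Phi^{G/N} \Phi^N (F_V A \sma F_W B) \cong F_{V^G \oplus W^G}\bigl((A \sma B)^G\bigr) \cong F_{V^G} A^G \sma F_{W^G} B^G \cong \Phi^G F_V A \sma \Phi^G F_W B, \]
which identifies $\phi_2$ with $\phi_1$ on the subcategory of pairs of free spectra.

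For the second check, the commutation map $\alpha$ and the iterated fixed-points map $\textup{it}$ are each isomorphisms on free spectra, as already recorded in the previous subsections. Hence every arrow appearing in the rectangle becomes an isomorphism upon restriction to pairs of free spectra, so both compositions $\phi_1 \to \phi_2$ are in particular nonzero. By the rigidity corollary there is at most one nonzero natural transformation $\phi_1 \to \phi_2$, so the two compositions must agree and the rectangle commutes.

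The main obstacle is the bookkeeping in the first step: one must check that the chain of isomorphisms identifying $\phi_2$ with $\phi_1$ on free spectra is genuinely natural in both variables, and that the residual $G/N$-actions on the intermediate spectra match up across the iterated-fixed-points isomorphism. Once this identification is set up carefully, the commutativity of the rectangle is a purely formal consequence of rigidity, without further direct calculation.
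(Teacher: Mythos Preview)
Your overall approach is exactly what the paper intends: the proposition is stated as one of several corollaries of the rigidity framework, with no separate proof given, and your invocation of the rigidity corollary is the intended argument.

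There is one slip in your first check.  In the chain
\[
\Phi^{G/N}\Phi^N(F_V A \sma F_W B)\;\cong\;F_{V^G\oplus W^G}\bigl((A\sma B)^G\bigr)\;\cong\;F_{V^G}A^G\sma F_{W^G}B^G,
\]
the second isomorphism requires $(A\sma B)^G\cong A^G\sma B^G$, which fails for general based $G$-spaces (take $G=\Z/2$ and $A=B=G_+$).  Fortunately, this explicit chain is unnecessary: your second check already supplies what you need.  Since $\alpha_G$, $\alpha_N$, $\alpha_{G/N}$, and $\textup{it}$ are each isomorphisms on free spectra, either composite around the rectangle is a natural isomorphism $\phi_1\to\phi_2$ on $\cat{Free}\times\cat{Free}$, and in particular exhibits $\phi_2$ as naturally isomorphic to $\sma\circ(\Phi^G,\Phi^G)$ there.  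So you may simply delete the first check and proceed directly from the second to the conclusion via rigidity.
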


The next two corollaries help us simplify and clarify the theory of cyclic orthogonal spectra.
\begin{prop}\label{g_acts_trivially} (cf. \cite[Lem 4.5]{angeltveit2014relative})
If $X$ is a $G$-spectrum and $g \in Z(G)$, then multiplication by $g$ on the trivial representation levels gives a map of $\mathscr J_G$-spaces
\[ \xymatrix @C=4em{ X \ar[r]^-{\mc I_{\R^\infty}^U g} & X } \]
which on fixed points
\[ \xymatrix @C=5em{ \Phi^G X \ar[r]^-{\Phi^G \mc I_{\R^\infty}^U g} & \Phi^G X } \]
is the identity map.
\end{prop}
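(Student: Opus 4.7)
The strategy is to apply the $k = 1$ case of the rigidity machinery of Section \ref{rigidity_section} to the natural self-map $\Phi^G(\mc I_{\R^\infty}^U g)$ of $\Phi^G$.

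First I would verify that $\mc I_{\R^\infty}^U g$ defines a natural endomorphism of the identity functor on $G\cat{Sp}^O$. At spectrum level $n$, left multiplication by $g$ commutes with the $O(n)$-action and with the structure maps $\Sigma X_n \ra X_{n+1}$ since the latter are $G$-equivariant, and it commutes with the full $G$-action on $X_n$ precisely because $g \in Z(G)$, so it is a well-defined map of orthogonal $G$-spectra. For any $G$-equivariant $f : X \ra Y$ each $f_n$ is $G$-equivariant and therefore commutes with the $g$-action, so $\mc I_{\R^\infty}^U g$ is natural in $X$. Applying $\Phi^G$ then produces a natural self-transformation
\[ \eta := \Phi^G(\mc I_{\R^\infty}^U g) : \Phi^G \ra \Phi^G \]
of the functor $\Phi^G : G\cat{Sp}^O \ra \cat{Sp}^O$.

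Now I invoke the rigidity package of Section \ref{rigidity_section} in the case $k = 1$: the only natural endomorphisms of the restriction of $\Phi^G$ to $\cat{Free}$ are zero and the identity, and $\Phi^G$ is itself a rigid functor, so any natural endomorphism of $\Phi^G$ is determined by its restriction to $\cat{Free}$. Hence $\eta$ is globally either $0$ or $\id$.

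To rule out $\eta = 0$, evaluate on the sphere spectrum $F_0 S^0 \in \cat{Free}$. Every level $(F_0 S^0)_n = S^n$ carries the trivial $G$-action, so $\mc I_{\R^\infty}^U g$ restricts to the identity on $F_0 S^0$, and hence $\eta$ restricts to the identity on $\Phi^G F_0 S^0 \cong F_0 S^0$, which is not the zero map. Therefore $\eta = \id$, as claimed. The only genuinely delicate point in the argument is the naturality setup in the first paragraph, where the centrality of $g$ is used essentially; once that is in place, rigidity does all the work and no direct computation with $\Phi^G$ on non-free spectra is needed.
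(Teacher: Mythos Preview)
Your proposal is correct and is exactly the argument the paper has in mind: the proposition is listed among the ``five more corollaries'' of the rigidity theorem, with no separate proof given, so the intended argument is precisely the $k=1$ rigidity application you describe. Your verification that $\mc I_{\R^\infty}^U g$ is a natural self-map of orthogonal $G$-spectra (using centrality of $g$) and your check on $F_0 S^0$ to exclude the zero endomorphism fill in the details the paper leaves implicit.
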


\begin{prop}\label{rotating_compatibility}
If $X$ and $Y$ are orthogonal spectra, then the self-map of orthogonal $C_r$-spectra
\[ f: N^{C_r} (X \sma Y) \cong X^{\sma r} \sma Y^{\sma r} \ra X^{\sma r} \sma Y^{\sma r} \]
which rotates only the $Y$ factors but not the $X$ factors fits into a commuting triangle
\[ \xymatrix @R=0.8em{
& \Phi^{C_r} (X^{\sma r} \sma Y^{\sma r}) \ar[dd]^-{\Phi^{C_r} \mc I_{\R^\infty}^U f} \\
X \sma Y \ar[ru]^-\Delta \ar[rd]_-\Delta & \\
& \Phi^{C_r} (X^{\sma r} \sma Y^{\sma r}) } \]
\end{prop}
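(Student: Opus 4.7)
The strategy is to decompose $f$ into a smash product and then combine the naturality of the commutation map $\alpha$ with Proposition~\ref{g_acts_trivially}.

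First, $f$ splits as $\id_{X^{\sma r}} \sma \rho_Y$, where $\rho_Y \colon Y^{\sma r} \to Y^{\sma r}$ is the self-map induced by a chosen generator $g$ of $C_r$ acting on $N^{C_r}_e Y$ by rotation of smash factors. On each trivial-representation level of $Y^{\sma r}$ this action is by definition multiplication by $g$, so $\mathcal I_{\R^\infty}^U \rho_Y$ coincides with $\mathcal I_{\R^\infty}^U g$ in the sense of Proposition~\ref{g_acts_trivially}. Since $g \in Z(C_r)$, that proposition yields $\Phi^{C_r}(\mathcal I_{\R^\infty}^U \rho_Y) = \id$ on $\Phi^{C_r}(Y^{\sma r})$.

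Next, I invoke the monoidality of the Hill--Hopkins--Ravenel diagonal: after the canonical shuffle identification $(X \sma Y)^{\sma r} \cong X^{\sma r} \sma Y^{\sma r}$ of $C_r$-spectra, one has
\[ \Delta_{X \sma Y} = \alpha \circ (\Delta_X \sma \Delta_Y) \colon X \sma Y \to \Phi^{C_r}(X^{\sma r} \sma Y^{\sma r}). \]
This compatibility can be read off by unwinding the coequalizer definitions of $\Delta$ and $\alpha$ from the proof of the Hill--Hopkins--Ravenel norm isomorphism above; alternatively, it can be deduced from the rigidity theorem of Section~\ref{rigidity_section} by observing that both sides are natural transformations out of $(X,Y) \mapsto X \sma Y$, that they agree on $X = Y = \Sph$, and that after identifying the source with $\Phi^{C_r}(N^{C_r}X) \sma \Phi^{C_r}(N^{C_r}Y)$ via $\Delta_X \sma \Delta_Y$ on cofibrant inputs, the uniqueness clause of Theorem~\ref{intro_rigidity} forces them to coincide.

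With these two ingredients the rest is a mechanical calculation. Naturality of $\alpha$ gives
\[ \Phi^{C_r}(\mathcal I_{\R^\infty}^U f) \circ \alpha = \alpha \circ \bigl( \id \sma \Phi^{C_r}(\mathcal I_{\R^\infty}^U \rho_Y) \bigr) = \alpha, \]
and therefore
\[ \Phi^{C_r}(\mathcal I_{\R^\infty}^U f) \circ \Delta_{X \sma Y} = \Phi^{C_r}(\mathcal I_{\R^\infty}^U f) \circ \alpha \circ (\Delta_X \sma \Delta_Y) = \alpha \circ (\Delta_X \sma \Delta_Y) = \Delta_{X \sma Y}, \]
which is precisely the commutativity of the triangle. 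The main obstacle in this outline is establishing the monoidality of $\Delta$ in step two; while intuitively clear, a careful verification either requires unwinding the universal-property definitions of $\Delta$ and $\alpha$ simultaneously or an additional rigidity argument, and that is where I expect the real bookkeeping to live.
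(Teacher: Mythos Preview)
Your argument is correct, but the paper takes a shorter route. Proposition~\ref{rotating_compatibility} is one of several corollaries listed immediately after the rigidity machinery, and the intended proof is a single direct application: both $\Delta$ and $\Phi^{C_r}(\mc I_{\R^\infty}^U f)\circ\Delta$ are natural transformations from the rigid functor $(X,Y)\mapsto X\sma Y$ (which is $\sma\circ(\Phi^e,\Phi^e)$ for the trivial group) to $(X,Y)\mapsto\Phi^{C_r}(X^{\sma r}\sma Y^{\sma r})$; both are nonzero, since $\Delta$ is an isomorphism on free spectra and $f$ is an isomorphism; hence they agree.

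Your approach instead factors $f=\id\sma\rho_Y$, kills $\Phi^{C_r}\rho_Y$ via Proposition~\ref{g_acts_trivially}, and then relies on the monoidality identity $\Delta_{X\sma Y}=\alpha\circ(\Delta_X\sma\Delta_Y)$. That identity is true, but---as you yourself flag---the cleanest way to prove it is again the rigidity theorem. So you end up invoking rigidity for an auxiliary lemma when you could have invoked it once for the proposition itself. What your route buys is a more transparent picture of the mechanism: the only possible obstruction to the triangle sits entirely in $\Phi^{C_r}(\rho_Y)$, and Proposition~\ref{g_acts_trivially} shows it vanishes. The paper trades that transparency for a one-line proof.
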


The next corollary requires more explanation.
Let $X$ be an orthogonal spectrum, and consider the diagonal map
\[ X^{\sma m} \overset{\Delta_n}\ra \Phi^{C_n} (X^{\sma m})^{\sma n} \]
If we write $(X^{\sma m})^{\sma n}$ in lexicographical order
\[ (X^{\sma m}) \sma (X^{\sma m}) \sma \ldots \sma (X^{\sma m}) \]
then there is an obvious $C_{mn}$-action which rotates the terms.
This commutes with the action of the subgroup $C_n$, so it passes to a $C_{mn}$-action on the geometric fixed points.
By Prop \ref{g_acts_trivially}, the subgroup $C_n$ acts trivially, giving a $C_m$-action on the fixed points.
\begin{prop}\label{cmn_is_cmcn}
Under these conventions, $\Delta_n$ is $C_m$-equivariant.
\end{prop}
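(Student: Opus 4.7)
The plan is to reduce the claim to the equality of natural transformations
\[ \Delta_n \circ g \;=\; \Phi^{C_n}(\tilde g) \circ \Delta_n \colon X^{\sma m} \longrightarrow \Phi^{C_n}(X^{\sma m})^{\sma n}, \]
where $g$ is the generator of the source $C_m$-action (cyclic shift of the $m$ factors of $X^{\sma m}$) and $\tilde g$ is the generator of the $C_{mn}$-action on $(X^{\sma m})^{\sma n} = X^{\sma mn}$ (cyclic shift of the $mn$ factors). Once this is established, $C_m$-equivariance follows because the convention preceding the statement identifies the generator of the target $C_m$-action with the image $[\tilde g] \in C_{mn}/C_n$, and this corresponds to $g$ under the isomorphism $C_{mn}/C_n \cong C_m$.

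A first attempt is to apply naturality of $\Delta_n$ in $Y = X^{\sma m}$ to reduce the equality to $\Phi^{C_n}(g^{\sma n}) = \Phi^{C_n}(\tilde g)$ and then invoke Prop.~\ref{g_acts_trivially}. This fails because $\tilde g^{-1} g^{\sma n}$ is not an element of $C_n$: it fixes most positions of $X^{\sma mn}$ but shifts the ``last in each block'' positions by $-m$, so it is not a cyclic rotation at all. Instead I intend to use the rigidity theorem \ref{intro_rigidity} directly, after extending both sides to multivariate natural transformations. Define on $m$-tuples of orthogonal spectra
\[ F_1(X_1,\ldots,X_m) = \Delta_n \circ g_{X_\bullet}, \qquad F_2(X_1,\ldots,X_m) = \Phi^{C_n}(\tilde g_{X_\bullet}) \circ \Delta_n, \]
where $g_{X_\bullet} \colon X_1 \sma \cdots \sma X_m \to X_m \sma X_1 \sma \cdots \sma X_{m-1}$ is the symmetric-monoidal cyclic-shift isomorphism on the $m$ factors, and $\tilde g_{X_\bullet} \colon (X_1 \sma \cdots \sma X_m)^{\sma n} \to (X_m \sma X_1 \sma \cdots \sma X_{m-1})^{\sma n}$ is the symmetric-monoidal cyclic-shift isomorphism on the $mn$ factors (well-defined because shifting by one in $C_{mn}$ carries the block pattern $X_1,\ldots,X_m$ to $X_m, X_1,\ldots,X_{m-1}$). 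Both $F_1$ and $F_2$ are natural transformations from $(X_1,\ldots,X_m) \mapsto X_1 \sma \cdots \sma X_m$ to $(X_1,\ldots,X_m) \mapsto \Phi^{C_n}(X_m \sma X_1 \sma \cdots \sma X_{m-1})^{\sma n}$, and both restrict along the diagonal $X_1 = \cdots = X_m = X$ to the single-variable maps above.

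I then apply Theorem~\ref{intro_rigidity} with $G$ the trivial group and $k = m$, so the source $X_1 \sma \cdots \sma X_m$ is identified with $\Phi^e X_1 \sma \cdots \sma \Phi^e X_m$. The Hill--Hopkins--Ravenel theorem says $\Delta_n$ is an isomorphism on cofibrant orthogonal spectra, and $g_{X_\bullet}$ is already an isomorphism, so $F_1$ is an isomorphism on every tuple of free spectra. Rigidity then implies that any natural transformation between these functors equals $F_1$ or is zero. Evaluating $F_2$ on the tuple $(F_0 S^0, \ldots, F_0 S^0)$ shows $F_2 \neq 0$ (both $g$ and $\tilde g$ become identity maps and $\Delta_n$ is an isomorphism by HHR), so $F_2 = F_1$. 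Restricting to the diagonal gives the desired equation.

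The main obstacle is carrying out the multivariate extension cleanly: one must verify that the cyclic shift $\tilde g$ of $mn$ factors permutes the labels $(X_1,\ldots,X_m)$ in precisely the same way that the iterated shift $g^{\sma n}$ would in the degenerate case, so that $F_1$ and $F_2$ genuinely share the same source and target functors and rigidity can be applied. Once this bookkeeping is set up correctly, the argument is mechanical.
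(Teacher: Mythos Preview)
Your argument is correct. Extending $\Delta_n \circ g$ and $\Phi^{C_n}(\tilde g)\circ\Delta_n$ to natural transformations in $m$ separate variables and invoking Theorem~\ref{intro_rigidity} with $G = e$ works: the source is literally $X_1\sma\cdots\sma X_m = \Phi^e X_1 \sma \cdots \sma \Phi^e X_m$, the target functors agree because the cyclic shift $\tilde g$ of $mn$ factors carries the block pattern $(X_1,\ldots,X_m)$ to $(X_m,X_1,\ldots,X_{m-1})$ and commutes with the $C_n$-action by block permutation, and $F_1$ is an isomorphism on free tuples because smashes of free spectra are cofibrant. Your nonvanishing check on $(F_0 S^0,\ldots,F_0 S^0)$ forces $F_2 = F_1$.

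The paper takes a different route, and it is worth comparing. The paper \emph{does} begin with your ``first attempt'': naturality of $\Delta_n$ in $Y = X^{\sma m}$ gives $\Delta_n\circ g = \Phi^{C_n}(g^{\sma n})\circ\Delta_n$, where $g^{\sma n}$ rotates each block of $m$ separately. You correctly observe that $h^{-1}g^{\sma n}$ is not in $C_n$, so Prop.~\ref{g_acts_trivially} cannot finish. But the paper does not abandon the approach; it observes that $h^{-1}g^{\sma n}$ is exactly the permutation that fixes positions $1,\ldots,m-1$ in each block and cyclically rotates the $m$th positions among the $n$ blocks. Writing $X^{\sma m} = X^{\sma(m-1)}\sma X$, this is precisely the map $f$ of Prop.~\ref{rotating_compatibility}, which says $\Phi^{C_n}(f)\circ\Delta_n = \Delta_n$. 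Hence $\Phi^{C_n}(g^{\sma n})\circ\Delta_n = \Phi^{C_n}(h)\circ\Phi^{C_n}(f)\circ\Delta_n = \Phi^{C_n}(h)\circ\Delta_n$ and the claim follows.

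So both arguments rest on rigidity, but the paper packages the relevant special case as the reusable Prop.~\ref{rotating_compatibility} and does an explicit permutation computation to invoke it, while you bypass that lemma entirely with a single multivariable application of the rigidity theorem. Your route is arguably cleaner for this particular statement; the paper's route has the advantage that the intermediate Prop.~\ref{rotating_compatibility} is reused elsewhere (in the proof of Theorem~\ref{construction_of_THH}).
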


\begin{proof}
Let $g$ denote the generator of $C_m$, and $h$ the generator of $C_{mn}$.
Since the diagonal is natural, $\Delta_n$ is equivariant with respect to the action of $g$, but with $g$ acting on $(X^{\sma m})^{\sma n}$ by rotating each $X^{\sma m}$ separately.
If we apply $g$ and then the inverse of $h$, the composite matches the description of the map $f$ of Prop \ref{rotating_compatibility}.
Therefore $f^{-1} \circ \Delta_n = \Delta_n$, so
\[ \Delta_n \circ g = g \circ \Delta_n = g \circ f^{-1} \circ \Delta_n = h \circ \Delta_n \]
Therefore $\Delta_n$ is $C_m$-equivariant.
\end{proof}

\begin{rmk}
This argument generalizes: the diagonal map $\Delta_n$ commutes with any automorphism of $X^{\sma mn}$ coming from a self-map of the $C_n$-set $C_m \times C_n$ that gives the identity on the quotient set $C_m$. In particular, $C_{mn}$ may be identified with $C_m \times C_n$ as $C_n$-sets with quotient $C_m$.
\end{rmk}

Our final corollary will be the key ingredient for showing that the cyclotomic structure maps on the cyclic bar construction are compatible with each other.
\begin{prop}\label{normdiag_iterated_commute}
If $X$ is an ordinary spectrum and $m,n \geq 0$ then the following square commutes:
\[ \xymatrix @C=5em{
X \ar[r]^-{\Delta_{C_{mn}}} \ar[d]^-{\Delta_{C_m}} & \Phi^{C_{mn}} X^{\sma mn} \ar[d]^-{\textup{it}} \\
\Phi^{C_m} X^{\sma m} \ar[r]^-{\Phi^{C_m}(\Delta_n)} & \Phi^{C_m} \Phi^{C_n} X^{\sma mn}
} \]
\end{prop}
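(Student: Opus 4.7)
The plan is to apply the rigidity theorem to the trivial group. Taking $G = e$ and $k = 1$ in Theorem \ref{intro_rigidity}, we have $\Phi^e = \id$, and both composites in the square are natural transformations from the identity functor on orthogonal spectra to the functor $F(X) := \Phi^{C_m}\Phi^{C_n}(X^{\sma mn})$. By rigidity, if a natural transformation $\eta: \id \to F$ is an isomorphism on every free spectrum $F_V A$, then the only natural transformations $\id \to F$ are $\eta$ and zero. It therefore suffices to check that both composites are isomorphisms on each $F_V A$; they will then both be nonzero and must coincide.

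For the top-right composite $\textup{it} \circ \Delta_{C_{mn}}$: the Hill-Hopkins-Ravenel diagonal $\Delta_{C_{mn}}: F_V A \to \Phi^{C_{mn}}(F_V A)^{\sma mn}$ is an isomorphism on every free spectrum, as established in the proof of the HHR diagonal theorem recalled in section \ref{rigidity_section}. Using the norm identification $(F_V A)^{\sma mn} \cong N^{C_{mn}}_e F_V A \cong F_{\textup{Ind}^{C_{mn}}_e V}(N^{C_{mn}}_e A)$, the smash power is itself a free $C_{mn}$-spectrum, so the iterated fixed points map $\textup{it}$ is also an isomorphism on it, by the proposition concluding section \ref{gspectra_basics}.

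For the bottom-left composite $\Phi^{C_m}(\Delta_n) \circ \Delta_{C_m}$: again $\Delta_{C_m}$ is an isomorphism on $F_V A$ by HHR. The intermediate spectrum $(F_V A)^{\sma m} \cong F_{V^{\oplus m}}(A^{\sma m})$ is free, so the diagonal $\Delta_n: (F_V A)^{\sma m} \to \Phi^{C_n}(F_V A)^{\sma mn}$ is an isomorphism of ordinary spectra by HHR. Naturality of $\Delta_n$ in the $C_m$-rotation on its source forces this map to be $C_m$-equivariant, as in Proposition \ref{cmn_is_cmcn}, and the functor $\Phi^{C_m}$ preserves this isomorphism.

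Both composites are thus isomorphisms on every free spectrum, hence both are nonzero natural transformations $\id \to F$, and rigidity forces them to agree. The one genuine obstacle that rigidity bypasses is the delicate comparison of the two $C_{mn}$-actions on $X^{\sma mn}$ implicit in the two composites (one coming from the $C_{mn}$-rotation, the other from the diagonal $C_m$-action across $n$ blocks of $m$); rather than unwinding those actions and building an explicit homotopy, we simply verify the two required isomorphism properties on free spectra using the formulas $N^G_H F_V A \cong F_{\textup{Ind}^G_H V}(N^G_H A)$ and $\Phi^H F_V A \cong F_{V^H} A^H$, and invoke rigidity to conclude.
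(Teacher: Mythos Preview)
Your proof is correct and is exactly the argument the paper intends: Proposition \ref{normdiag_iterated_commute} is listed among the corollaries of the rigidity theorem with no separate proof, and your write-up simply spells out the details—take $G=e$, $k=1$, observe $\Phi^e=\id$ is rigid, and verify both composites are nonzero (indeed isomorphisms) on free spectra by reducing to the formulas for $\Phi^H F_V A$, $N^G_H F_V A$, and the iterated fixed points map. The one place you rely on earlier work in the section is the $C_m$-equivariance of $\Delta_n$ needed to form $\Phi^{C_m}(\Delta_n)$, and you correctly cite Proposition \ref{cmn_is_cmcn} for that.
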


\begin{rmk}
It is reasonable to expect that $\Delta_n$ coincides with the generalized HHR diagonal
\[ N^{C_{mn}/C_n} X \overset{\Delta_*}\ra \Phi^{C_n} N^{C_{mn}} X \]
of \cite[2.19]{angeltveit2014relative}. Of course the above proposition is true for $\Delta_*$ as well.
\end{rmk}

\section{Cyclic orthogonal spectra and the cyclic bar construction.}\label{sec:bar}

Now we will integrate the modern technology from section \ref{sec:orthogonal} into the classical theory from section \ref{sec:cyclic}. We prove a few more properties of cyclic and cocyclic orthogonal spectra that concern the genuinely equivariant structure. Then we describe the construction and properties of the cyclic bar construction in orthogonal spectra, expanding on the treatment in \cite{angeltveit2014relative}.

\subsection{Equivariant properties of cyclic and cocyclic spectra.}

Let $X_\bullet$ be a cyclic orthogonal spectrum.
Then $\sd_r X_\bullet$ is an $r$-cyclic orthogonal spectrum.
At each simplicial level, $(\sd_r X)_{n-1}$ is an orthogonal spectrum with $C_r$-action generated by the $n$th power of the cycle map $t_{rn-1}^n$.
This commutes with all the face, degeneracy, and cycle maps, making $\sd_r X_\bullet$ an $r$-cyclic object in orthogonal $C_r$-spectra.
So we may take the geometric fixed points on each level separately.
% Now induce up to a $\mathscr J_{C_r}$-space and define the geometric fixed points of each simplicial level:
% \[ \bigvee_{V,W} F_{W^{C_r}} S^0 \sma \mathscr J_{C_r}^{C_r}(V,W) \sma (\sd_r X(V))_{n-1}^{C_r} \rightrightarrows \bigvee_V F_{V^{C_r}} S^0 \sma (\sd_r X(V))_{n-1}^{C_r}  \ra \Phi^{C_r} (\sd_r X)_{n-1} \]
\begin{prop}\label{cyclic_spectra_realization}
If $X_\bullet$ is a cyclic spectrum then $\Phi^{C_r} \sd_r X_\bullet$ is naturally a cyclic spectrum, and there is a natural $S^1$-equivariant isomorphism
\[ |\Phi^{C_r} \sd_r X_\bullet| \cong \rho_r^* \Phi^{C_r} |X_\bullet| \]
\end{prop}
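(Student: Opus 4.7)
The plan is to follow the space-level argument of Proposition \ref{cyclic_fixed_points} almost line for line, with the new ingredient being Proposition \ref{g_acts_trivially} to make geometric fixed points cooperate. First I would show that $\Phi^{C_r}\sd_r X_\bullet$ is naturally a cyclic spectrum: applying $\Phi^{C_r}$ in each simplicial degree to the $r$-cyclic object $\sd_r X_\bullet$ of orthogonal $C_r$-spectra gives an $r$-cyclic orthogonal spectrum, and I must check this descends along $P_r:\mathbf\Lambda_r\to\mathbf\Lambda$. Two morphisms of $\mathbf\Lambda_r$ with the same image under $P_r$ differ by precomposition with a power of the cyclic rotation, whose action on $(\sd_r X)_{n-1}=X_{rn-1}$ is by definition the self-map $t_{rn-1}^n$ on all trivial-representation levels$-$that is, precisely multiplication by the generator of the abelian group $C_r$. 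Proposition \ref{g_acts_trivially} then gives $\Phi^{C_r}(t_{rn-1}^n)=\id$, so the descent is immediate.

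The core of the argument is to show that $\Phi^{C_r}$ commutes with realization of simplicial $C_r$-spectra, yielding a natural isomorphism $\Phi^{C_r}|\sd_r X_\bullet|\cong|\Phi^{C_r}\sd_r X_\bullet|$. Using the skeletal filtration, $|\sd_r X_\bullet|$ is built as a sequential colimit of pushouts attaching cells of the form $\Delta^n_+\sma(\sd_r X)_{n-1}$ along latching data, with all the relevant maps being levelwise closed inclusions in weak Hausdorff spaces. By the technical properties of $\Phi^{C_r}$ recalled in Section \ref{gspectra_basics}, it commutes with such pushouts and with the filtered colimit, and since $\Delta^n_+$ is cofibrant as a based $C_r$-space with trivial action, the commutation map $\alpha$ is an isomorphism $\Phi^{C_r}(\Delta^n_+\sma Y)\cong\Delta^n_+\sma\Phi^{C_r}(Y)$. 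Piecing these identifications together yields the claimed commutation.

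To finish, I would assemble the $S^1$-equivariant chain
\[
|\Phi^{C_r}\sd_r X_\bullet|\cong\rho_r^*\Phi^{C_r}|\sd_r X_\bullet|\cong\rho_r^*\Phi^{C_r}|X_\bullet|,
\]
where the first step combines the commutation above with the identification between cyclic and $r$-cyclic realization of the descended spectrum $\Phi^{C_r}\sd_r X_\bullet$ (accounting for the $\rho_r$-pullback), and the second uses the $S^1$-equivariant diagonal homeomorphism $D_r:|\sd_r X_\bullet|\to|X_\bullet|$, whose construction at the space level (Proposition \ref{prop:cyclic_diagonal}) carries over verbatim to simplicial spectra by naturality. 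The hard part will be bookkeeping the various $S^1$-actions$-$the cyclic action on $|X_\bullet|$, the $r$-cyclic action on $|\sd_r X_\bullet|$ (differing by $\rho_r$), and the residual $S^1/C_r$-action on the geometric fixed points$-$and verifying that each isomorphism in the chain respects them. Each individual equivariance follows by construction or naturality, but threading the pullbacks consistently through the argument will require care.
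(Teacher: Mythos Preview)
Your proposal is correct and follows the same overall arc as the paper: use Proposition \ref{g_acts_trivially} to descend $\Phi^{C_r}\sd_r X_\bullet$ from $\mathbf\Lambda_r$ to $\mathbf\Lambda$, establish that $\Phi^{C_r}$ commutes with realization, and then string together the chain of $S^1$-equivariant isomorphisms using $D_r$.

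The one genuine difference is in how you establish the commutation $\Phi^{C_r}|\sd_r X_\bullet|\cong|\Phi^{C_r}\sd_r X_\bullet|$. You propose to climb the skeletal filtration of the realization, invoking the listed closure properties of $\Phi^{C_r}$ (coproducts, pushouts along levelwise closed inclusions, filtered colimits) together with the isomorphism $\alpha$ for smashing with the cofibrant object $\Delta^n_+$. The paper instead goes the other direction: it unwinds the coequalizer that \emph{defines} $\Phi^{C_r}$, observes that each term has the form $F_{V^{C_r}}S^0\sma X(V)^{C_r}$ where $X(V)^{C_r}$ is an honest simplicial \emph{space}, applies the already-proven space-level Proposition \ref{cyclic_fixed_points} to that factor, and then passes back through the coequalizer using that $\rho_r^*$, $P_r$, $\sd_r$, and geometric realization all commute with colimits. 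The paper's route is shorter because it recycles the space-level result wholesale rather than reproving the commutation from scratch; your route is more self-contained and makes the role of the skeletal filtration explicit. Either way works.
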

\begin{proof}
Since geometric fixed points is a functor, we know that $\Phi^{C_r} \sd_r X_\bullet$ is at least an $r$-cyclic orthogonal spectrum.
By Prop \ref{g_acts_trivially}, the $n$th power of the cycle map $t_{rn-1}^n$ acts trivially on the geometric fixed points. Therefore $\Phi^{C_r} \sd_r X_\bullet$ is actually a cyclic spectrum, i.e. it factors in a canonical way through the quotient functor $P_r: \mathbf\Lambda_r \to \mathbf\Lambda$.

Using $P_r \Phi^{C_r} \sd_r X_\bullet$ to denote $\Phi^{C_r} \sd_r X_\bullet$ as an $r$-cyclic spectrum, we have the equivariant isomorphisms
\[ |\Phi^{C_r} \sd_r X_\bullet| \cong \rho_r^* |P_r \Phi^{C_r} \sd_r X_\bullet| \cong \rho_r^* \Phi^{C_r} |\sd_r X_\bullet| \cong \rho_r^* \Phi^{C_r} |X_\bullet| \]
where the middle map is the canonical commutation of $\Phi^{C_r}$ with geometric realization.
These are obtained from the maps of Prop \ref{cyclic_fixed_points} applied to the term $F_{V^{C_r}} S^0 \sma X(V)^{C_r}$ in the coequalizer system for $\Phi^{C_r} X$. They pass to the coequalizer because $\rho_r^*$, $P_r$, $\sd_r$, and geometric realization all commute with colimits.
\end{proof}

We already know (Prop \ref{reedy_cofibrant_spectra}) that the realization functor $|X_\bullet|$ preserves weak equivalences when $X_\bullet$ is Reedy cofibrant.
We will also need to know when $|X_\bullet|$ is cofibrant.
\begin{prop}\label{reedy_cof_implies_realization_is_cof_s1_spectra}
If $X_\bullet$ is a cyclic spectrum, $X_{-1}$ is a cofibrant spectrum, and each cyclic latching map $L_n^\cyc X \to X_n$ is a cofibration of $C_{n+1}$-spectra, then $|X_\bullet|$ is a cofibrant $S^1$-spectrum.
\end{prop}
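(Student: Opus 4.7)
The plan is to mimic the proof of Proposition \ref{reedy_cof_implies_realization_is_cof_s1_spaces} word for word, replacing the pushout-product argument on spaces with the analogous one on orthogonal spectra. Since the pushout square of Proposition \ref{cyclic_latching_square} is stated for both the space-level and spectrum-level cases, this square can serve as the inductive step.

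First I would set up the induction on the cyclic skeletal filtration $|\sk_n^\cyc X_\bullet|$. The base case is $|\sk_{-1}^\cyc X_\bullet| = X_{-1}$, which is cofibrant by hypothesis and has trivial $S^1$-action, hence is $S^1$-cofibrant. For the inductive step, the pushout square from Proposition \ref{cyclic_latching_square} (in its based, smash-product form)
\[ \xymatrix{
L_n^\cyc X \sma_{C_{n+1}} \Lambda^n_+ \cup_{L_n^\cyc X \sma \partial\Lambda^n_+} X_n \sma_{C_{n+1}} \partial\Lambda^n_+ \ar[r] \ar[d] & X_n \sma_{C_{n+1}} \Lambda^n_+ \ar[d] \\
|\sk_{n-1}^\cyc X_\bullet| \ar[r] & |\sk_n^\cyc X_\bullet| }
\]
reduces the problem to checking that the top horizontal map is an $S^1$-cofibration of orthogonal spectra. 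By the standard pushout-product property, it suffices to show that the pushout-product of a generating $C_{n+1}$-cofibration of spectra $L_n^\cyc X \to X_n$ with the free $S^1$-cofibration of based spaces $\partial\Lambda^n_+ \to \Lambda^n_+$, followed by taking $C_{n+1}$-orbits, is a cofibration of $S^1$-spectra.

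Next I would reduce to generating cells. The generating $C_{n+1}$-cofibrations of spectra have the form
\[ F_V\bigl((C_{n+1}/C_r \times \partial D^k)_+\bigr) \ra F_V\bigl((C_{n+1}/C_r \times D^k)_+\bigr) \]
while $\partial\Lambda^n_+ \to \Lambda^n_+$ is built as a free $S^1$-CW complex out of cells $(S^1 \times \partial D^\ell)_+ \to (S^1 \times D^\ell)_+$. Using that $F_V$ is a left adjoint, that smashing with a space commutes with $F_V$, and the associativity of the pushout-product, the relevant pushout-product simplifies to
\[ F_V\bigl((C_{n+1}/C_r \times S^1)_+ \sma (\partial D^{k+\ell} \ra D^{k+\ell})_+\bigr) \]
as a map of $(C_{n+1} \times S^1)$-spectra. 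Taking $C_{n+1}$-orbits commutes with $F_V$ and converts $(C_{n+1}/C_r \times S^1)_+$ into $(S^1/C_r)_+$, yielding
\[ F_V\bigl((S^1/C_r)_+ \sma (\partial D^{k+\ell} \ra D^{k+\ell})_+\bigr), \]
which is precisely a generating $S^1$-cofibration from Proposition \ref{prop:model_structure}.

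The main technical point, and the only place where anything could go wrong, is ensuring that the free-spectrum functor $F_V$ interacts correctly with smash products and orbits under these mixed group actions; this is formal but must be stated with care. One should also verify that passing from the ``generating cell'' case back to the full pushout-product $L_n^\cyc X \to X_n$ against $\partial\Lambda^n \to \Lambda^n$ preserves $S^1$-cofibrations, which follows from the usual closure of cofibrations under transfinite composition, pushout, and retract in the model structure of Proposition \ref{prop:model_structure}. Once the top map of the pushout square is an $S^1$-cofibration, the bottom is as well, completing the induction and hence the proof that $|X_\bullet| = \colim_n |\sk_n^\cyc X_\bullet|$ is a cofibrant $S^1$-spectrum.
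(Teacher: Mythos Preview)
Your overall strategy matches the paper's proof exactly: induct up the cyclic skeleta using the pushout square of Proposition~\ref{cyclic_latching_square}, reduce to a pushout-product of a generating $C_{n+1}$-cell of spectra with a free $S^1$-cell of spaces, and take $C_{n+1}$-orbits. However, there is a genuine gap in your computation at the very end.

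You write that ``taking $C_{n+1}$-orbits commutes with $F_V$,'' and thereby identify the result as the generating $S^1$-cofibration $F_V\bigl((S^1/C_r)_+ \sma (\partial D^{k+\ell} \to D^{k+\ell})_+\bigr)$. This is not correct. In the model structure on $C_{n+1}$-spectra (Proposition~\ref{prop:model_structure}), the representation $V$ ranges over subrepresentations of a complete $C_{n+1}$-universe, so $V$ is generally a \emph{nontrivial} $C_{n+1}$-representation. The $C_{n+1}$-action on $F_V A$ involves the action on $V$ through the morphism space $\mathscr J_G(V,W)$, not just the action on $A$; hence the orbit functor does not pass inside $F_V$. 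Moreover, even if it did, you have not explained how $V$ --- a $C_{n+1}$-representation --- would become an $S^1$-representation so as to index a generating $S^1$-cofibration.

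The paper avoids this issue by not attempting to name the result as a single generating cell. After simplifying the pushout-product to
\[
\bigl[ F_V (C_{n+1}/C_r)_+ \sma_{C_{n+1}} S^1_+ \bigr] \sma (\partial D^{k+\ell} \to D^{k+\ell})_+,
\]
one observes that the induction functor $- \sma_{C_{n+1}} S^1_+$ from $C_{n+1}$-spectra to $S^1$-spectra is a left Quillen functor, and $F_V(C_{n+1}/C_r)_+$ is a cofibrant $C_{n+1}$-spectrum. Therefore the bracketed term is a cofibrant $S^1$-spectrum, and smashing it with the cell $(\partial D^{k+\ell} \to D^{k+\ell})_+$ yields an $S^1$-cofibration. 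This is the step you should substitute for the failed commutation claim.
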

\begin{proof}
As in Prop \ref{reedy_cof_implies_realization_is_cof_s1_spaces}, we we reduce to checking that the $C_{n+1}$ orbits of a pushout-product of a $C_{n+1}$-cell of spectra and a free $S^1$-cell of spaces is an $S^1$-cofibration:
\[ [ (F_V (C_{n+1}/C_r \times \partial D^k)_+ \ra F_V (C_{n+1}/C_r \times D^k)_+) \square (S^1 \times \partial D^\ell \ra S^1 \times D^\ell)_+ ]_{C_{n+1}} \]
Here $V$ is any finite-dimensional $C_r$-representation. This simplifies to
\[ [ F_V (C_{n+1}/C_r)_+ \sma_{C_{n+1}} S^1_+ ] \sma (\partial D^{k + \ell} \ra D^{k + \ell})_+ \]
It suffices to show the left-hand term is cofibrant as an $S^1$-spectrum, but it is obtained by applying the left Quillen functor $- \sma_{C_{n+1}} S^1_+$ to the $C_{n+1}$-cofibrant object $F_V (C_{n+1}/C_r)_+$, so it is cofibrant.
\end{proof}

Next, let $X^\bullet$ be a cocyclic orthogonal spectrum.
Then $\sd_r X^\bullet$ is an $r$-cocyclic orthogonal spectrum, and by the same argument as above, $\Phi^{C_r} \sd_r X^\bullet$ is naturally a cocyclic orthogonal spectrum.
As before, we get the string of equivariant maps
\[ \Tot(\Phi^{C_r} \sd_r X^\bullet) \cong \rho_r^* \Tot(P_r \Phi^{C_r} \sd_r X^\bullet) \la \rho_r^* \Phi^{C_r} \Tot(\sd_r X^\bullet) \cong \rho_r^* \Phi^{C_r} \Tot(X^\bullet) \]
The middle map is the canonical commutation of $\Phi^{C_r}$ with totalization, but as one might expect, it is not an isomorphism.
\begin{prop}\label{tot_phi_interchange}
There is a natural interchange map
\[ \Phi^{C_r} \Tot(Z^\bullet) \ra \Tot(\Phi^{C_r} Z^\bullet) \]
for cosimplicial spectra with $C_r$-actions.
\end{prop}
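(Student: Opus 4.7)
The plan is to construct the interchange map using the universal property of the totalization. Since $\Tot(\Phi^{C_r} Z^\bullet)$ is defined as an equalizer, it suffices to produce a compatible family of natural maps
\[ \Phi^{C_r} \Tot(Z^\bullet) \ra F(\Delta^n_+, \Phi^{C_r} Z^n), \quad n \geq 0, \]
equalizing the two parallel arrows in that equalizer. Each such map will be assembled from $\Phi^{C_r}$ applied to the canonical projection $\Tot(Z^\bullet) \to F(\Delta^n_+, Z^n)$ post-composed with an interchange for the cotensor.

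The main ingredient is therefore a natural transformation
\[ \beta_A: \Phi^{C_r} F(A, Y) \ra F(A, \Phi^{C_r} Y), \]
defined whenever $A$ is a based CW complex with trivial $C_r$-action and $Y$ is a $C_r$-spectrum. I would define $\beta_A$ as the adjoint of the composite
\[ \Phi^{C_r} F(A, Y) \sma A \;=\; \Phi^{C_r} F(A, Y) \sma \Phi^{C_r} \Sigma^\infty A \;\overset{\alpha^{-1}}\cong\; \Phi^{C_r}\bigl(F(A, Y) \sma A\bigr) \;\xrightarrow{\Phi^{C_r}(\ev)}\; \Phi^{C_r} Y. \]
This uses two observations. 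First, $\Phi^{C_r} \Sigma^\infty A \cong \Sigma^\infty A$ when $A$ has trivial $C_r$-action, by the free-spectrum formula $\Phi^{C_r} F_0 A \cong F_{0^{C_r}} A^{C_r} = F_0 A$. Second, $\Sigma^\infty A$ is cofibrant as a $C_r$-spectrum, because a CW complex with trivial action is built from generating cells of the form $F_0((C_r/C_r \times D^k)_+)$; thus by item (5) of the theorem on $\Phi^G$ the commutation map $\alpha$ is an isomorphism and may be inverted.

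Assembling these ingredients produces a candidate family of maps into the ambient product $\prod_n F(\Delta^n_+, \Phi^{C_r} Z^n)$. The main obstacle I anticipate is verifying that this family actually factors through the equalizer defining $\Tot(\Phi^{C_r} Z^\bullet)$. I expect this to be a routine diagram chase that combines naturality of $\beta_A$ in $A$ (applied to each coface and codegeneracy map $\Delta^m_+ \to \Delta^n_+$) with the standard coherence of $\alpha$ and naturality of evaluation. Naturality of the resulting interchange in $Z^\bullet$ is automatic from naturality of $\beta_A$ in $Y$.
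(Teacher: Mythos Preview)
Your approach is correct and produces the desired natural interchange, but it differs from the paper's construction. The paper works directly with the coequalizer presentation of $\Phi^{C_r}$ from Definition~\ref{df:geometric_fixed_points}: it writes $\Phi^{C_r}\Tot(Z^\bullet)$ as a coequalizer of terms of the form $\bigvee_V F_{V^{C_r}}S^0 \sma \Tot(Z^\bullet)(V)^{C_r}$, expands the totalization inside as an equalizer of products, and then uses the canonical assembly $\bigvee_V \prod_k \to \prod_k \bigvee_V$ to manufacture the map into $\Tot(\Phi^{C_r}Z^\bullet)$. Your route instead black-boxes the internal construction of $\Phi^{C_r}$ and uses only its functoriality together with the lax monoidal structure $\alpha$; this is cleaner and more conceptual, and indeed your $\beta_A$ is exactly the map $\overline\alpha$ that the paper introduces later when defining the pre-cyclotomic structure on $F(T,T')$. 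One simplification: you do not need to invert $\alpha$. With the convention used throughout the rest of the paper (and in \cite{mandell2002equivariant}, \cite{hhr}), the commutation map already points in the direction $\Phi^{C_r}X \sma \Phi^{C_r}Y \to \Phi^{C_r}(X\sma Y)$, so your composite $\Phi^{C_r}F(A,Y)\sma \Phi^{C_r}\Sigma^\infty A \to \Phi^{C_r}(F(A,Y)\sma A) \to \Phi^{C_r}Y$ exists unconditionally and the cofibrancy argument for $\Sigma^\infty A$ is unnecessary. The trade-off is that the paper's explicit presentation is what is later unwound in the proof of Proposition~\ref{cyclotomic_str_in_terms_of_tot}, so if you adopt your construction you should be prepared to redo that trace in your terms (this is not harder, just different).
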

\begin{proof}
The interchange map is given canonically by universal properties, using the shorthand diagram
\[ \xymatrix @R=1em @C=1em {
& \Tot \Phi^{C_r} \ar[rr] && \prod_k \Phi^{C_r} \ar@<-.5ex>[rr] \ar@<.5ex>[rr] && \prod_{k,\ell} \Phi^{C_r} \\
\Phi^{C_r} \Tot \ar@{-->}[ru] &&&&& \\
&&& \prod_k \bigvee_V \ar[uu] \ar@<-.5ex>[rr] \ar@<.5ex>[rr] && \prod_{k,\ell} \bigvee_V \ar[uu] \\
\bigvee_V \Tot \ar[uu] \ar[rr] && \bigvee_V \prod_k \ar[ru] \ar@<-.5ex>[rr]|\hole \ar@<.5ex>[rr]|\hole && \bigvee_V \prod_{k,\ell} \ar[ru] & \\
&&& \prod_k \bigvee_{V,W} \ar@<-.5ex>[uu] \ar@<.5ex>[uu] && \\
\bigvee_{V,W} \Tot \ar@<-.5ex>[uu] \ar@<.5ex>[uu] \ar[rr] && \bigvee_{V,W} \prod_k \ar[ru] \ar@<-.5ex>[uu] \ar@<.5ex>[uu] &&& \\
} \]
A diagram-chase shows this is natural with respect to maps of cosimplicial spectra $Z^\bullet \to \ti Z^\bullet$.
\end{proof}

\begin{cor}\label{cocyclic_spectra_totalization}
If $X^\bullet$ is a cocyclic spectrum then $\Phi^{C_r} \sd_r X^\bullet$ is naturally a cocyclic spectrum, and there is a natural $S^1$-equivariant map
\[ \rho_r^* \Phi^{C_r} \Tot(X^\bullet) \ra \Tot(\Phi^{C_r} \sd_r X^\bullet) \]
\end{cor}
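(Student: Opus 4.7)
The plan is to dualize the proof of Proposition \ref{cyclic_spectra_realization} and then insert the interchange map of Proposition \ref{tot_phi_interchange} at the appropriate point, since $\Phi^{C_r}$ commutes with geometric realization (a colimit) but not with totalization (a limit).

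First I would verify that $\Phi^{C_r} \sd_r X^\bullet$ is naturally a cocyclic spectrum. Since $X^\bullet$ is cocyclic, $\sd_r X^\bullet$ is an $r$-cocyclic orthogonal spectrum. At cosimplicial level $n-1$ the spectrum $(\sd_r X)^{n-1} = X^{rn-1}$ carries the $C_r$-action generated by the $n$th power of the cycle map $t_{rn-1}^n$, which commutes with all the other structure maps and thus makes $\sd_r X^\bullet$ into an $r$-cocyclic object in $C_r$-spectra. Geometric fixed points are a functor, so $\Phi^{C_r} \sd_r X^\bullet$ is at least $r$-cocyclic. But the generator $t_{rn-1}^n$ corresponds to a central element of the $r$-cyclic structure (namely the canonical $C_r \subset S^1$), so Proposition \ref{g_acts_trivially} implies that $\Phi^{C_r}$ of this action is the identity. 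Therefore $\Phi^{C_r} \sd_r X^\bullet$ factors canonically through the quotient $P_r \colon \mathbf\Lambda_r \to \mathbf\Lambda$ and defines a genuine cocyclic spectrum.

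Next I would build the comparison map as the composite
\[ \rho_r^* \Phi^{C_r} \Tot(X^\bullet) \overset{\Phi^{C_r} D_r}{\cong} \rho_r^* \Phi^{C_r} \Tot(\sd_r X^\bullet) \ra \rho_r^* \Tot(\Phi^{C_r} \sd_r X^\bullet) \cong \Tot(\Phi^{C_r} \sd_r X^\bullet), \]
where the first map is the (iso) diagonal from the cocyclic version of Proposition \ref{prop:cyclic_diagonal}, the second is the interchange map of Proposition \ref{tot_phi_interchange} applied to the cosimplicial spectrum $\sd_r X^\bullet$ with its levelwise $C_r$-action, and the third is the identification of the $\rho_r^*$-twisted totalization of an $r$-cocyclic spectrum with the totalization of the associated cocyclic spectrum under $P_r$ (this is exactly the dual of the first isomorphism in Proposition \ref{cyclic_fixed_points}). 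Unlike the cyclic case, the middle map is genuinely a map and not an isomorphism, which is why the conclusion is stated as a map rather than a homeomorphism.

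The only point requiring care is $S^1$-equivariance. The diagonal $D_r$ is $S^1$-equivariant by the dual of Proposition \ref{prop:cyclic_diagonal}. The outer isomorphism is $S^1$-equivariant by the cocyclic analogue of Proposition \ref{cyclic_fixed_points_compatibility}. For the interchange map, one checks from its construction in Proposition \ref{tot_phi_interchange} that it is natural in the cosimplicial spectrum, and in particular is compatible with the cycle operators that generate the residual $S^1$-action on both sides; this is the step that I expect to be the mildest obstacle, and it is handled by observing that each building block in the defining diagram (the coequalizer presentation of $\Phi^{C_r}$, the products defining $\Tot$, and the face/degeneracy/cycle maps) is a natural piece of structure, so the canonical map produced by the universal property automatically commutes with the $S^1$-action inherited from the $r$-cyclic structure. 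Stringing the three equivariant maps together yields the desired $S^1$-equivariant map, completing the proof.
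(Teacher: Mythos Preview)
Your proposal is correct and matches the paper's approach essentially verbatim: the paper also argues that $\Phi^{C_r}\sd_r X^\bullet$ is cocyclic ``by the same argument as above'' (i.e.\ via Proposition~\ref{g_acts_trivially}), and then assembles the map as the composite $\rho_r^*\Phi^{C_r}\Tot(X^\bullet)\cong \rho_r^*\Phi^{C_r}\Tot(\sd_r X^\bullet)\to \rho_r^*\Tot(P_r\Phi^{C_r}\sd_r X^\bullet)\cong \Tot(\Phi^{C_r}\sd_r X^\bullet)$, with the middle arrow being the interchange of Proposition~\ref{tot_phi_interchange} and noted explicitly not to be an isomorphism. Your discussion of $S^1$-equivariance is slightly more detailed than the paper's, which simply asserts it ``as before.''
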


\subsection{The cyclic bar construction.} Let $R$ be an orthogonal ring spectrum. The \emph{cyclic bar construction on $R$} is the cyclic spectrum $N^\cyc_\bullet R$ with
\[ N^\cyc_n R = R^{\sma (n+1)} = R^{\sma n} \sma \uline{R} \]
We underline the last copy of $R$ since in the simplicial structure it plays a special role.
The action of $\mathbf\Lambda$ is best visualized by taking the category $[n]$ and labeling the arrows with copies of $R$:
\[ \xy 0;<36pt,0pt>:
a(0)*{\bullet}="n-1";
a(120)*{\bullet}="2";
a(180)*{\bullet}="1";
a(240)*{\bullet}="0";
a(300)*{\bullet}="n";
(0,-1.35)*{R};
(1.2,-.65)*{R};
(-1.2,-.65)*{R};
(-1.2,.65)*{R};
% 0;a(30) **\dir{}; (0,0)*\ellipse(1)__,=:a(240){-};
% use this line to figure out the 1.13 factor:
% a(30);a(-30) **\crv{(1.13,0)}; ?>*\dir{>};
% need a factor of 1.13 on the correction
% 1.13cos(30), 1.13sin(30)
"n-1";"n" **\crv{(0.9786,-0.565)}; ?>*\dir{>};
"n";"0" **\crv{(0,-1.13)}; ?>*\dir{>};
"0";"1" **\crv{(-0.9786,-0.565)}; ?>*\dir{>};
"1";"2" **\crv{(-0.9786,0.565)}; ?>*\dir{>};
% make the dotted arc
0;a(-90) **\dir{}; (0,0)*\ellipse(1)__,=:a(120){.};
\endxy \]
Each map $[k] \rightarrow [n]$ induces a map $R^{\sma (n+1)} \rightarrow R^{\sma (k+1)}$ as follows.
Each arrow $i \rightarrow i+1$ in $[k]$ is sent to some composition $j \rightarrow \ldots \rightarrow j+\ell$ in $[n]$, which corresponds to $\ell$ copies of $R$ in $R^{\sma (n+1)}$.
We send this smash product $R^{\sma \ell}$ to the copy of $R$ in slot $i$ of $R^{\sma (k+1)}$, using the multiplication on $R$.
When $\ell = 0$, we interpret this as the unit map $\Sph \rightarrow R$.

More generally, if $\cat C$ is a category enriched in orthogonal spectra, the cyclic nerve on $\cat C$ is defined as
\[ N^\cyc_n \cat C = \bigvee_{c_0,\ldots,c_n \in \ob \cat C} \cat C(c_0,c_1) \sma \cat C(c_1,c_2) \sma \ldots \sma \cat C(c_{n-1},c_n) \sma \uline{\cat C(c_n,c_0)} \]
One may think of these objects loosely as ``functors'' from $[k]$ into $\cat C$, where ordinary products have been substituted by smash products, and this suggests the correct face, degeneracy, and cycle maps. In particular, as indicated below, the 0th face map $d_0: N^\cyc_n \cat C \to N^\cyc_{n-1} \cat C$ switches the first term $\cat C(c_0,c_1)$ past the others and composes it into $\cat C(c_n,c_0)$. The extra degeneracy map $s_{n+1}: N^\cyc_n \cat C \to N^\cyc_{n+1} \cat C$ inserts a unit $\Sph \to \cat C(c_0,c_0)$ into the underlined factor in the smash product. The cycle map $t_n: N^\cyc_n \cat C \to N^\cyc_{n} \cat C$ rotates the factors towards the right.
\[ \begin{array}{cccc}
d_0:& \cat C(c_0,c_1) \sma \cat C(c_1,c_2) \sma \ldots \sma \uline{\cat C(c_n,c_0)} &\ra &
\cat C(c_1,c_2) \sma \ldots \sma \uline{\cat C(c_n,c_1)} \\
s_{n+1}:& \ldots \sma \cat C(c_{n-1},c_n) \sma \uline{\cat C(c_n,c_0)} \sma \Sph &\ra &
\ldots \sma \cat C(c_{n-1},c_n) \sma \cat C(c_n,c_0) \sma \uline{\cat C(c_0,c_0)} \\
t_n:& \cat C(c_0,c_1) \sma \ldots \sma \cat C(c_{n-1},c_n) \sma \uline{\cat C(c_n,c_0)} &\ra & 
\cat C(c_n,c_0) \sma \cat C(c_0,c_1) \sma \ldots \sma \uline{\cat C(c_{n-1},c_n)}
\end{array} \]
If $\cat C$ has a single object, we recover the definition of $N^\cyc R$ we gave above.
\begin{df}
The \emph{topological Hochschild homology} of $\cat C$ is the geometric realization of the cyclic nerve
\[ THH(\cat C) := |N^\cyc_\bullet \cat C|. \]
\end{df}

The cyclic bar construction of orthogonal spectra is remarkable because its geometric fixed points are isomorphic to the original spectrum.
\begin{thm}\label{construction_of_THH}
If $\cat C$ is a spectral category then there are natural maps of $S^1$-spectra for $r \geq 0$
\[ \gamma_r: THH(\cat C) \ra \rho_r^* \Phi^{C_r} THH(\cat C). \]
They are compatible in the following sense: if $T = THH(\cat C)$ then the square
\[ \xymatrix @C=7em{
T \ar[r]^-{\gamma_{mn}} \ar[d]^-{\gamma_m} & \rho_{mn}^* \Phi^{C_{mn}} T \ar[d]^-{\textup{it}} \\
\rho_m^* \Phi^{C_m} T \ar[r]^-{\rho_m^*\Phi^{C_m} \gamma_n} & \rho_m^* \Phi^{C_m} \rho_n^* \Phi^{C_n} T } \]
strictly commutes. Furthermore if every $\cat C(c_i,c_j)$ is a cofibrant orthogonal spectrum, then every $\gamma_r$ is an isomorphism.
\end{thm}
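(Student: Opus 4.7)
My plan is to construct $\gamma_r$ as the geometric realization of a map of cyclic orthogonal spectra
\[ \ti\gamma_r: N^\cyc_\bullet \cat C \ra \Phi^{C_r} \sd_r N^\cyc_\bullet \cat C, \]
and then invoke Proposition \ref{cyclic_spectra_realization} to produce the natural isomorphism $|\Phi^{C_r} \sd_r N^\cyc_\bullet \cat C| \cong \rho_r^* \Phi^{C_r} THH(\cat C)$. At simplicial level $n-1$ the source decomposes as $\bigvee_{c_\bullet} Z_{c_\bullet}$, where $c_\bullet = (c_0,\ldots,c_{n-1})$ and $Z_{c_\bullet} = \cat C(c_0,c_1) \sma \cdots \sma \cat C(c_{n-1},c_0)$, while the target is $\Phi^{C_r} N^\cyc_{rn-1} \cat C$, with $C_r$ acting via the $n$th power of the cycle map, i.e. cyclically shifting both the $rn$ smash factors and their object labels by $n$ positions. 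I define $\ti\gamma_r$ on the summand $Z_{c_\bullet}$ as the Hill--Hopkins--Ravenel diagonal $\Delta: Z_{c_\bullet} \ra \Phi^{C_r} Z_{c_\bullet}^{\sma r}$ composed with the map induced by the inclusion of $Z_{c_\bullet}^{\sma r}$ into $N^\cyc_{rn-1} \cat C$ as the $C_r$-fixed summand indexed by the repeated tuple $(c_0,\ldots,c_{n-1},c_0,\ldots,c_{n-1},\ldots)$. Assembling over $c_\bullet$ and using that $\Phi^{C_r}$ commutes with wedges yields the levelwise map.

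Verifying that $\ti\gamma_r$ is a map of cyclic orthogonal spectra is the principal technical task. Compatibility with the face and degeneracy maps drawn from $\sd_r(\mathbf\Delta) \subset \mathbf\Lambda_r$ is immediate from naturality of $\Delta$. Compatibility with the cycle maps is where Proposition \ref{cmn_is_cmcn} enters: under $\Delta$, the cyclic rotation $t_{n-1}$ on $N^\cyc_{n-1} \cat C$ is intertwined with the $C_n \cong C_{rn}/C_r$-quotient of the $C_{rn}$-rotation of all $rn$ smash factors of $\Phi^{C_r} N^\cyc_{rn-1}\cat C$. This is where I expect the main obstacle to live: one must simultaneously track how the $C_r$-action on $N^\cyc_{rn-1}\cat C$ permutes \emph{summands} (via shift of the $rn$-tuple of objects) and acts \emph{internally} on each repeated-tuple summand by rotating its $r$ copies of $Z_{c_\bullet}$, and check that both aspects of the cyclic structure are respected. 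Once $\ti\gamma_r$ is a cyclic map, taking realizations defines $\gamma_r$.

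The compatibility square relating $\gamma_{mn}$, $\gamma_m$, $\gamma_n$, and the iterated fixed points map reduces, on each simplicial level and each summand, to Proposition \ref{normdiag_iterated_commute} applied to $X = Z_{c_\bullet}$; commuting $\Phi$ past realization by Proposition \ref{cyclic_spectra_realization} then delivers the square after realization. Finally, suppose every $\cat C(c_i,c_j)$ is cofibrant. Then each $Z_{c_\bullet}$ is cofibrant and $\Delta: Z_{c_\bullet} \ra \Phi^{C_r} Z_{c_\bullet}^{\sma r}$ is an isomorphism by the Hill--Hopkins--Ravenel theorem. The remaining summands of $N^\cyc_{rn-1}\cat C$ (indexed by tuples not fixed by the shift-by-$n$ action) group into $C_r$-orbits of the form $(C_r/H)_+ \sma W$ with $H \lneq C_r$, and these have vanishing geometric $C_r$-fixed points. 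Hence $\ti\gamma_r$ is a levelwise isomorphism of orthogonal spectra, and so $\gamma_r$ is an isomorphism of $S^1$-spectra.
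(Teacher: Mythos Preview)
Your approach is essentially the paper's: define a map of cyclic spectra $N^\cyc_\bullet \cat C \to \Phi^{C_r}\sd_r N^\cyc_\bullet \cat C$ via the Hill--Hopkins--Ravenel diagonal on each summand, realize, and invoke Proposition~\ref{cyclic_spectra_realization}. The vanishing of the non-periodic summands and the use of Proposition~\ref{normdiag_iterated_commute} for the compatibility square are also exactly as in the paper.

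There is one genuine oversight. You claim that compatibility with all face and degeneracy maps from $\sd_r(\mathbf\Delta)$ is immediate from naturality of $\Delta$, but this fails for $d_0$. In the cyclic nerve, $d_0$ moves the first smash factor $\cat C(c_0,c_1)$ past the others before composing it into the last, so the naive $r$-fold smash $(d_0)^{\sma r}$ on $Z_{c_\bullet}^{\sma r}$ is \emph{not} the subdivided face map $\sd_r d_0$ on the corresponding summand of $N^\cyc_{rn-1}\cat C$: they differ by the self-map that cycles only the $r$ copies of $\cat C(c_{n-1},c_0)$ while fixing the other factors. This is precisely the situation of Proposition~\ref{rotating_compatibility}, which is the engine behind Proposition~\ref{cmn_is_cmcn}; the paper handles $d_0$ and $t_{rn-1}$ together by this same argument. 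So your fix for the cycle maps is also the fix for $d_0$, but it is not ``immediate from naturality.''

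A smaller point: for the compatibility square, reducing to Proposition~\ref{normdiag_iterated_commute} levelwise is correct, but passing from levelwise to realization requires checking that the iterated-fixed-points map and the subdivision homeomorphisms $D_m$, $D_n$, $D_{mn}$ interact correctly; the paper subdivides the square and invokes Proposition~\ref{cyclic_fixed_points_compatibility} for this step.
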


\begin{rmk}
This extends one of the main results of \cite{angeltveit2014relative} from ring spectra to spectral categories.
This turns out to not be so difficult.
However the treatment in \cite{angeltveit2014relative} does not prove the above compatibility square, which seems to be harder.
Our rigidity theorem allows us to check the compatibility easily.
\end{rmk}

\begin{proof}
In essence, we need to understand the geometric fixed points of $THH(\cat C)$. We start with the isomorphism of $S^1$-spectra from Prop \ref{cyclic_spectra_realization}:
\[ |\Phi^{C_r} \sd_r N^\cyc_\bullet \cat C| \congar \rho_r^* \Phi^{C_r} |N^\cyc_\bullet \cat C| \]
It therefore suffices to understand the geometric fixed points of the subdivision $\sd_r N^\cyc_\bullet \cat C$. This is an $r$-cyclic spectrum. At simplicial level $(n-1)$ it is a wedge of smash products
\[ \bigvee_{c_0,\ldots,c_{rn-1} \in \ob \cat C} \cat C(c_0,c_1) \sma \ldots \sma \uline{\cat C(c_{rn-1},c_0)} \]
and the $C_r$-action is by $t_{rn-1}^n$, which rotates this $rn$-fold smash product by $n$ slots. In particular, the generator $\alpha \in C_r$ sends the summand $A$ indexed by $c_0,\ldots,c_{rn-1}$ to the summand $\alpha(A)$ indexed by
\[ c_{(r-1)n}, \ldots, c_{rn-1}, c_0, \ldots, c_{(r-1)n-1} \]
by a homeomorphism. The summands $A$ and $\alpha(A)$ coincide precisely when the list $c_0,\ldots,c_{rn-1}$ repeats with period $n$:
\[ c_0,c_1,\ldots,c_{n-1},c_0,c_1,\ldots,c_{n-1},c_0,c_1,\ldots,c_{n-1} \]
If this is not the case, then the $C_r$-closure $\ti A$ of $A$ does not have any levelwise $C_r$-fixed points, $\ti A(V)^{C_r} = *$. This is because any fixed point would have in its $C_r$-orbit a point $x \in A$, but then $x$ must be in the intersection $A \cap \alpha(A) = *$.

It is therefore a good idea to write $Y = \sd_r N^\cyc_{n-1} \cat C$ as the wedge of two spectra $X \vee X'$, where $X$ is the wedge of those summands $A$ such that $A = \alpha(A)$, and $X'$ contains the remaining summands. Since the levelwise fixed point functor $(-)(V)^{C_r}$ preserves wedge sums, we immediately conclude that the inclusion $X \to Y$ induces a homeomorphism on each level $X(V)^{C_r} \cong Y(V)^{C_r}$. Recalling the definition of $\Phi^{C_r}$ (Def. \ref{df:geometric_fixed_points}), we conclude that the inclusion also induces an isomorphism on the geometric fixed points $\Phi^{C_r} X \cong \Phi^{C_r} Y$.

In conclusion, the geometric fixed points of the subdivision can be rewritten as
\begin{eqnarray*}
\Phi^{C_r} \sd_r N^\cyc_{n-1} \cat C &\cong & \Phi^{C_r}\left(\bigvee_{c_0,\ldots,c_{n-1}} \left(\cat C(c_0,c_1) \sma \ldots \sma \cat C(c_{n-1},c_0)\right)^{\sma r}\right) \\
&\cong & \bigvee_{c_0,\ldots,c_{n-1}} \Phi^{C_r}\left(\cat C(c_0,c_1) \sma \ldots \sma \cat C(c_{n-1},c_0)\right)^{\sma r}. \\
\end{eqnarray*}
It remains to compare this last term to $N^\cyc_{n-1} \cat C$ using Hill-Hopkins-Ravenel norm diagonal
\[ \cat C(c_0,c_1) \sma \ldots \sma \cat C(c_{n-1},c_0) \overset{\Delta}\ra \Phi^{C_r}\left(\cat C(c_0,c_1) \sma \ldots \sma \cat C(c_{n-1},c_0)\right)^{\sma r}. \]
We want to show that these diagonal maps for each $n \geq 1$ assemble into a map of cyclic spectra
\[ N^\cyc_\bullet \cat C \overset\Delta\ra \Phi^{C_r} \sd_r N^\cyc_\bullet \cat C \]
(cf. \cite[4.6]{angeltveit2014relative}).
It easily commutes with most of the face and degeneracy maps because the diagonal is natural.
One runs into issues with $d_0$ and $t_{rn-1}$, but these are fixed by the argument we used in Prop \ref{cmn_is_cmcn}.
In brief, the $r$-fold smash $(d_0)^{\sma r}$ of $d_0$ from the cyclic structure is not the same map as $d_0$ in the $r$-cyclic structure, but they differ by the map
\[ f: \left(\cat C(c_0,c_1) \sma \ldots \sma \cat C(c_{n-1},c_0)\right)^{\sma r} \ra \left(\cat C(c_0,c_1) \sma \ldots \sma \cat C(c_{n-1},c_0)\right)^{\sma r} \]
that takes the factors $\cat C(c_{n-1},c_0)$ and cycles them while leaving all the other terms fixed.
It suffices to show that $f$ commutes with $\Delta$, but we did that in Prop \ref{rotating_compatibility}.
A similar argument works for $t_{rn-1}$.

This proves that the Hill-Hopkins-Ravenel diagonal gives a map of cyclic spectra.
We define $\gamma_r$ to be its geometric realization, combined with the $S^1$-equivariant isomorphism of Prop \ref{cyclic_spectra_realization}:
\[ |N^\cyc_\bullet \cat C| \overset{|\Delta_r|}\ra |\Phi^{C_r} \sd_r N^\cyc_\bullet \cat C| \congar \rho_r^* \Phi^{C_r} |N^\cyc_\bullet \cat C| \]
When all the $\cat C(c_i,c_{i+1})$ are cofibrant, $\gamma_r$ is a realization of isomorphisms at each level, so $\gamma_r$ is an isomorphism.

Now we check compatibility.
The compatibility square may be expanded and subdivided
\[
\xymatrix @C=7em{
|N^\cyc_\bullet \cat C| \ar[r]^-{\Delta_{mn}} \ar[d]^-{\Delta_m} & \Phi^{C_{mn}} |\sd_{mn} N^\cyc_\bullet \cat C| \ar[d]^-{\textup{it}} \ar[r]^-{\Phi^{C_{mn}} D_{mn}}_-\cong & \Phi^{C_{mn}} |N^\cyc_\bullet \cat C| \ar[d]^-{\textup{it}} \\
\Phi^{C_m} |\sd_m N^\cyc_\bullet \cat C| \ar[d]^-{\Phi^{C_m} D_m}_-\cong & \Phi^{C_m} \Phi^{C_n} |\sd_{mn} N^\cyc_\bullet \cat C| \ar[d]^-{\Phi^{C_m} \Phi^{C_n} D_m}_-\cong \ar[r]^-{\Phi^{C_m} \Phi^{C_n} D_{mn}}_-\cong & \Phi^{C_m} \Phi^{C_n} |N^\cyc_\bullet \cat C| \ar@{=}[d] \\
\Phi^{C_m} |N^\cyc_\bullet \cat C| \ar[r]^-{\Phi^{C_m} \Delta_n} & \Phi^{C_m} \Phi^{C_n} |\sd_n N^\cyc_\bullet \cat C| \ar[r]^-{\Phi^{C_m} \Phi^{C_n} D_n}_-\cong & \Phi^{C_m} \Phi^{C_n} |N^\cyc_\bullet \cat C| \\}
\]
The top-right square commutes by naturality of the iterated fixed points map, and the bottom-right commutes by Prop \ref{cyclic_fixed_points_compatibility}.
The left-hand rectangle is subtle, so we expand and subdivide it once more:
% \resizebox{\textwidth}{!}{ }
\[ \xymatrix @C=5em{
|N^\cyc_\bullet \cat C| \ar[rr]^-{\Delta_{mn}} \ar[d]^-{\Delta_m}
 &
 & \Phi^{C_{mn}} |\sd_m \sd_n N^\cyc_\bullet \cat C| \ar[d]^-{\textup{it}} \\
\Phi^{C_m} |\sd_m N^\cyc_\bullet \cat C| \ar[d]^-{\Phi^{C_m} D_m}_-\cong \ar[r]^-{\Phi^{C_m} \sd_m \Delta_n}
 & \Phi^{C_m} |\sd_m \Phi^{C_n} \sd_n N^\cyc_\bullet \cat C| \ar[d]^-{\Phi^{C_m} D_m}_-\cong \ar[r]^-{\textup{int}}_-\cong
 & \Phi^{C_m} \Phi^{C_n} |\sd_m \sd_n N^\cyc_\bullet \cat C| \ar[d]^-{\Phi^{C_m} \Phi^{C_n} D_m}_-\cong \\
\Phi^{C_m} |N^\cyc_\bullet \cat C| \ar[r]^-{\Phi^{C_m} \Delta_n}
 & \Phi^{C_m} \Phi^{C_n} |\sd_n N^\cyc_\bullet \cat C| \ar@{=}[r]
 & \Phi^{C_m} \Phi^{C_n} |\sd_n N^\cyc_\bullet \cat C| \\
} \]
The bottom-left square inside commutes by naturality of $D_m$. The interchange map ``int'' is the obvious identification of the two cyclic spectra which at simplicial level $k-1$ are both given by $\Phi^{C_m} \Phi^{C_n} N^\cyc_{mnk-1} \cat C$. The lower-right square then easily commutes, and the remaining rectangle commutes by Prop \ref{normdiag_iterated_commute}.
\end{proof}

% \begin{rmk}
% In a similar way, one could form a cyclic bar construction in any modern category of spectra.
% The result is always a spectrum with an $S^1$-action, but it is usually not so clear how make it a genuine $S^1$-spectrum that is ``cyclotomic'' in the sense outlined above.
% The category of orthogonal spectra is actually special in this regard.
% \end{rmk}

In order to do homotopy theory, we need to know which maps $\cat C \rightarrow \cat D$ are sent to weak equivalences $THH(\cat C) \rightarrow THH(\cat D)$, and we need conditions guaranteeing that $THH(\cat C)$ will be cofibrant.
By our work above, this reduces to a calculation of the latching maps and cyclic latching maps.
Let $\cat S$ denote the initial spectrally-enriched category on the objects of $\cat C$:
\[ \cat S(c_i,c_j) = \left\{ \begin{array}{rl} \Sph & c_i = c_j \\ * & c_i \neq c_j \end{array} \right. \]
The latching maps of the cyclic bar construction can be described concisely in terms of the canonical functor $\cat S \to \cat C$.

\begin{prop}\label{cyclic_bar_latching}
For every $n \geq 0$ the latching map $L_n N^\cyc \cat C \rightarrow N^\cyc_n \cat C$ is the wedge of pushout-products
\[ \bigvee_{c_0,\ldots,c_n \in \ob \cat C} (\cat S(c_0,c_1) \rightarrow \cat C(c_0,c_1)) \square \ldots \square (\cat S(c_{n-1},c_n) \rightarrow \cat C(c_{n-1},c_n)) \square (* \rightarrow \uline{\cat C(c_n,c_0)}) \]
and the cyclic latching map $L_n^\cyc N^\cyc_\bullet \cat C \rightarrow N^\cyc_n \cat C$ is the wedge of pushout-products
\[ \bigvee_{c_0,\ldots,c_n \in \ob \cat C} (\cat S(c_0,c_1) \rightarrow \cat C(c_0,c_1)) \square \ldots \square (\uline{\cat S(c_n,c_0)} \rightarrow \uline{\cat C(c_n,c_0)}). \]
\end{prop}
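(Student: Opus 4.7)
The plan is to realize both latching objects as colimits over cubes of subspaces and then identify those cubes with the cubes presenting the iterated pushout-products on the right-hand side. Recall that the source of the pushout-product of $n+1$ based maps $f_i \colon A_i \to B_i$ is itself a colimit over the cube of proper subsets $T \subsetneq \{0,\ldots,n\}$ of the subproducts $\bigwedge_i C_i^T$, where $C_i^T = B_i$ for $i \in T$ and $C_i^T = A_i$ for $i \notin T$. Meanwhile, Proposition~\ref{cyclic_latching_cube} and the parallel discussion preceding it express $L_n^\cyc X$ and $L_n X$ as colimits over cubes of subspaces $X_S \subseteq X_n$ indexed by all proper subsets $S \subsetneq \{0,\ldots,n\}$, or by those containing $0$, respectively. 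So it suffices to match the two cubes up when $X_\bullet = N^\cyc_\bullet \cat C$.

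To compute $X_S$, I would first observe that the round-down morphism $[n] \to [|S|-1]$ in $\mathbf\Lambda$ collapses the cyclic arrow from vertex $i$ to vertex $i+1$ (indices mod $n+1$) to a loop precisely when $i+1 \notin S$. The induced map $N^\cyc_{|S|-1}\cat C \to N^\cyc_n \cat C$ is therefore a composite of cyclic degeneracies inserting units into exactly those slots. Since $\cat S(c,c') = \Sph$ if $c=c'$ and $*$ otherwise, and since $\cat S(c,c) \to \cat C(c,c)$ is precisely the unit of $\cat C$, the image $X_S$ decomposes as the wedge over tuples $(c_0,\ldots,c_n)$ of the subproducts $\bigwedge_i C_i$ with $C_i = \cat S(c_i,c_{i+1})$ when $i+1 \notin S$ and $C_i = \cat C(c_i,c_{i+1})$ when $i+1 \in S$. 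Tuples for which $c_i \neq c_{i+1}$ at a collapsed slot contribute only the basepoint, as they must.

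It remains to check that the reindexing $T := \{i : i+1 \in S \bmod (n+1)\}$ identifies the two cubes compatibly with their inclusion maps, and translates the simplicial restriction correctly. The assignment $S \mapsto T$ is an order-preserving bijection between proper subsets, and under it $X_S$ is literally the subproduct indexed by $T$ in the pushout-product cube. In the cyclic case this produces the full $(n+1)$-fold pushout-product with all the claimed factors. In the simplicial case, the constraint $0 \in S$ becomes $n \in T$: slot $n$ is never replaced by a unit, which is exactly what collapsing the last factor to $(\ast \to \underline{\cat C(c_n,c_0)})$ accomplishes. The one fiddly step is keeping straight the off-by-one between the vertex indexing of Proposition~\ref{cyclic_latching_cube} and the slot indexing of the smash product; once that is settled, the edge case $S = \emptyset$ in the cyclic cube automatically gives $X_{-1} = \bigvee_c \Sph$, matching the subproduct with every factor $\cat S(c,c)$ (in which only diagonal tuples $(c,c,\ldots,c)$ contribute).
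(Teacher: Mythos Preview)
Your proof is correct and follows essentially the same strategy as the paper: both identify the latching cube of Proposition~\ref{cyclic_latching_cube} with the cube presenting the iterated pushout-product, the only difference being that you make the index shift explicit via the bijection $S \mapsto T = \{i : i+1 \in S\}$, whereas the paper absorbs it by labeling the pushout-product factor for $(c_{i-1},c_i)$ by its target vertex $i$. Your treatment of the edge case $S = \emptyset$, where you write $X_{-1} = \bigvee_c \Sph$, carries the same caveat the paper flags in the remark immediately after its proof (it is really the wedge of images of the unit maps).
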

\begin{proof}
% It is remarkably tricky to prove that a pushout-product of levelwise closed inclusions is a levelwise closed inclusion (in particular it is false if we work in $k$-spaces and not in CGWH spaces), so 
One proves by induction that the pushout-product of $n+1$ different maps $f_0: A_0 \rightarrow X_0, \ldots, f_n: A_n \rightarrow X_n$ comes from a cube-shaped diagram indexed by the subsets $S \subseteq \{0,\ldots,n\}$ and inclusions.
Each $S$ is assigned to the smash product of those $A_i$ for $i \not\in S$ and $X_i$ for $i \in S$.
The pushout-product $f_0 \square \ldots \square f_n$ is then the map that includes into the final vertex the colimit of the remaining vertices.

Therefore it suffices to identify the cube for the pushout-product with the cube from Prop \ref{cyclic_latching_cube} for the $n$th cyclic latching object $L_n^\cyc$. Each cube sends $S \subseteq \{0,\ldots,n\}$ to a smash product in which the smash summand for $(c_{i-1},c_i)$ is $\cat C(c_{i-1},c_i)$ if $i \in S$ and $\cat S(c_{i-1},c_i)$ if $i \not\in S$. In the pushout-product cube, the map induced by the inclusion $S \subseteq T$ is a smash product of $\cat S(c_{i-1},c_i) \to \cat C(c_{i-1},c_i)$ for each $i \in T - S$, together with the identity map on $\cat S(c_{i-1},c_i)$ for $i \not\in T$ and $\cat C(c_{i-1},c_i)$ for $i \in S$. But this is the same as the map in the cyclic latching cube, because the rounding down map $T \to S$ preserves every arrow which ends in $S$ and squashes the rest, so in the cyclic structure that induces a map that includes the unit for every arrow not ending in $S$ and preserves the rest.
Therefore the two cubes coincide.
Restricting attention to subsets $S$ containing 0 gives the cube for the simplicial latching object, giving a pushout-product in which the last factor is always $\uline{\cat C(c_n,c_0)}$.
\end{proof}

\begin{rmk}
We have claimed that the 0th cyclic latching map is the wedge of unit maps $\iota: \cat S(c,c) \to \cat C(c,c)$.
In general, this is not quite correct $-$ it is actually the wedge of inclusions of the images of these unit maps.
However the inclusion of the image of $\iota$ is still a pushout of $\iota$, so it does not matter which one we use in the latching square from Prop \ref{cyclic_latching_square}.
\end{rmk}

The previous proposition suggests that we need a very weak cofibrancy assumption on $\cat C$ to guarantee that $THH(\cat C)$ is well behaved.

\begin{df}
$\cat C$ is \emph{cofibrant} if every map $\cat S(c_i,c_j) \to \cat C(c_i,c_j)$ is a cofibration of orthogonal spectra. Equivalently, every $\cat C(c_i,c_j)$ is a cofibrant orthogonal spectrum.
\end{df}

\begin{prop}\label{cyclic_nerve_is_s1_cofibrant}
If $\cat C$ is cofibrant then $|N^\cyc_\bullet \cat C|$ is a cofibrant $S^1$-spectrum.
Moreover the inclusion of each cyclic skeleton into the next is a cofibration of $S^1$-spectra.
\end{prop}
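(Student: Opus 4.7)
The plan is to apply Proposition \ref{reedy_cof_implies_realization_is_cof_s1_spectra} to $X_\bullet = N^\cyc_\bullet \cat C$, reducing the claim to checking two conditions: (i) $X_{-1}$ is a cofibrant spectrum, and (ii) each cyclic latching map $L_n^\cyc N^\cyc \cat C \to N^\cyc_n \cat C$ is a cofibration of $C_{n+1}$-spectra, where $C_{n+1}$ acts by cyclic rotation of the smash factors. Condition (i) is immediate: unwinding the definition of $X_{-1}$ as the equalizer of $X_0 \rightrightarrows X_1$ identifies it with $\bigvee_{c \in \ob \cat C} \Sph$, a wedge of spheres, hence cofibrant.

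For (ii), I would invoke Proposition \ref{cyclic_bar_latching} to write the cyclic latching map as a wedge, indexed by tuples $(c_0, \ldots, c_n)$ of objects, of pushout-products of the cofibrations $\iota_{i,i+1} \colon \cat S(c_i,c_{i+1}) \to \cat C(c_i,c_{i+1})$. I would then decompose the wedge into $C_{n+1}$-orbits: each orbit with stabilizer $C_s \leq C_{n+1}$ (so the representative tuple has minimal period $d = (n+1)/s$) contributes an induced summand of the form $(C_{n+1})_+ \sma_{C_s} P_x$, where $P_x$ is the pushout-product for $x$ with $C_s$ acting by rotation. Since change of groups along $C_s \leq C_{n+1}$ preserves cofibrations, it suffices to verify that each $P_x$ is a $C_s$-cofibration; by the periodicity of the tuple, $P_x = Q^{\square s}$ where $Q = \iota_{c_0,c_1} \square \ldots \square \iota_{c_{d-1},c_0}$ is a non-equivariant cofibration of orthogonal spectra, and $C_s$ acts by cyclically permuting the $s$ smash factors.

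The main obstacle is therefore to establish that the $s$-fold pushout-product power of a cofibration of orthogonal spectra, equipped with the cyclic permutation action of $C_s$, is a $C_s$-cofibration. I would handle this by reducing $Q$ to a cell complex built from the generating cofibrations $F_V(\partial D^k)_+ \to F_V D^k_+$; using that pushout-products and transfinite compositions of cofibrations are cofibrations, this reduces to powers of single generating cells, whose smash power admits an evident $C_s$-cell structure with cells of the form $F_{V^{\oplus s}}((C_s/C_r \times \partial D^m)_+ \to (C_s/C_r \times D^m)_+)$ for $r \mid s$, all of which are generating $C_s$-cofibrations in the model structure of Proposition \ref{prop:model_structure}. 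Finally, the moreover clause follows because the pushout square of Proposition \ref{cyclic_latching_square} expresses the inclusion of cyclic skeleta as cobase change along the pushout-product of the cyclic latching map (a $C_{n+1}$-cofibration, as just shown) with the free $S^1$-cofibration $\partial\Lambda^n \hookrightarrow \Lambda^n$, and the pushout-product calculation already performed in the proof of Proposition \ref{reedy_cof_implies_realization_is_cof_s1_spectra} identifies this as an $S^1$-cofibration.
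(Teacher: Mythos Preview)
Your proposal is correct and follows essentially the same route as the paper: reduce via Proposition~\ref{reedy_cof_implies_realization_is_cof_s1_spectra} and Proposition~\ref{cyclic_bar_latching} to showing that an $s$-fold pushout-product power of a nonequivariant cofibration is a $C_s$-cofibration, then handle that by a cell-by-cell argument. You are actually more explicit than the paper on two points---you verify $X_{-1}\cong\bigvee_{c}\Sph$ directly, and you spell out why the ``moreover'' clause follows from the latching square---while the paper in turn proves the slightly stronger statement that $f^{\square r}$ is a $\Sigma_r$-cell complex (not merely a $C_r$-cell complex), and is more careful about listing the formal ingredients needed to pass from a general cell complex $Q$ to single cells (the mixed cross-terms in $Q^{\square s}$ carry nontrivial stabilizers and must be organized via induction/restriction, not just by nonequivariant closure properties of cofibrations).
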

\begin{proof}
By Prop \ref{reedy_cof_implies_realization_is_cof_s1_spectra}, it suffices to show that the cyclic latching map from Prop \ref{cyclic_bar_latching}
\[ \bigvee_{c_0,\ldots,c_{n-1} \in \ob \cat C} (\cat S(c_0,c_1) \ra \cat C(c_0,c_1)) \square \ldots \square (\uline{\cat S(c_{n-1},c_0)} \ra \uline{\cat C(c_{n-1},c_0)}) \]
is a $C_n$-cofibration of spectra.
We restrict to one wedge summand at a time and consider its $C_n$-orbit.
If there is no periodicity in the objects $c_0, \ldots, c_{n-1}$ then the orbit is of the form $(C_n)_+$ smashed with a pushout-product of cofibrations, so it is automatically a $C_n$-cofibration.
When there is $r$-fold periodicity, the problem instead reduces to showing that an $r$-fold pushout-product of a single cofibration $f$ of orthogonal spectra becomes a $C_r$-cofibration $f^{\square r}$.
Since $\square$ preserves retracts, it suffices to show that if $f$ is a cell complex of orthogonal spectra then $f^{\square r}$ is a cell complex of orthogonal $C_r$-spectra.

In fact, it is a cell complex of orthogonal $\Sigma_r$-spectra.
The argument for this is tedious but very formal.
It holds because the categories of orthogonal $G$-spectra with varying $G$ satisfy the following assumptions: the domains of our $G$-cells are small with respect to relative cell complexes; $\sma$ commutes with colimits in each variable; a pushout-product of an $H$-cell with a $K$-cell is a coproduct of $H \times K$-cells; the operation $G \sma_H -$ takes $H$-cells to $G$-cells; restriction of group actions takes $G$-cells to $H$-cell complexes; and the $n$-fold pushout-product of a single cell is a $\Sigma_n$-cell complex.
This last assumption can be observed for orthogonal spectra by combining the space-level argument (e.g. \cite[3.4]{malkiewich2015tower}) with the fact that an $n$-fold smash power of a free spectrum $F_{\R^m} A$ is isomorphic to $F_{\oplus^n \R^m} A^{\sma n}$ as a $\Sigma_n$-spectrum.
\end{proof}

%(Explain why any spectral category may be replaced by a cofibrant one.)

\begin{prop}\label{prop:thh_equivariantly_derived}
If $\cat C$, $\cat D$ are cofibrant and $\cat C \to \cat D$ is a pointwise weak equivalence which is the identity on objects, then it induces an $\mc F$-equivalence of $S^1$-spectra $|N^\cyc_\bullet \cat C| \to |N^\cyc_\bullet \cat D|$ (see Defn \ref{df:f_equiv}).
\end{prop}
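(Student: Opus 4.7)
The strategy is to use Theorem \ref{construction_of_THH} to reduce the equivariant claim to a nonequivariant weak equivalence of realizations, and then handle the latter by Reedy cofibrancy.

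First, by the remark immediately following Proposition \ref{prop:geometric_fixed_pts_measure_equivs}, a map of $S^1$-spectra is an $\mathcal F$-equivalence precisely when the induced maps on derived geometric fixed points $\Phi^{C_n}(c-)$ are weak equivalences for every $n \geq 1$. By Proposition \ref{cyclic_nerve_is_s1_cofibrant}, both $|N^\cyc_\bullet \cat C|$ and $|N^\cyc_\bullet \cat D|$ are already cofibrant $S^1$-spectra, so the point-set functor $\Phi^{C_n}$ computes the derived functor on them. It therefore suffices to show that $\Phi^{C_n}$ sends the map in question to a weak equivalence for each $n \geq 1$.

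Second, I would invoke Theorem \ref{construction_of_THH}. Since $\cat C$ and $\cat D$ are cofibrant, each $\gamma_n$ is an isomorphism, and naturality in the input spectral category gives a commuting square
\[ \begin{array}{ccc}
THH(\cat C) & \xrightarrow{\ \gamma_n\ } & \rho_n^* \Phi^{C_n} THH(\cat C) \\
\downarrow & & \downarrow \\
THH(\cat D) & \xrightarrow{\ \gamma_n\ } & \rho_n^* \Phi^{C_n} THH(\cat D)
\end{array} \]
whose horizontal arrows are isomorphisms. The right vertical arrow is therefore a weak equivalence if and only if the left vertical arrow is. This reduces the problem to showing that the underlying map $THH(\cat C) \to THH(\cat D)$ of orthogonal spectra is a weak equivalence.

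Third, I would establish this by Reedy cofibrancy. By Proposition \ref{cyclic_bar_latching}, the $n$th latching map $L_n N^\cyc_\bullet \cat C \to N^\cyc_n \cat C$ is a wedge of pushout-products of the cofibrations $\cat S(c_i,c_j) \to \cat C(c_i,c_j)$, hence itself a cofibration of orthogonal spectra; so $N^\cyc_\bullet \cat C$ is Reedy $q$-cofibrant, and likewise for $\cat D$. At each simplicial level, the map $N^\cyc_n \cat C \to N^\cyc_n \cat D$ is a wedge over tuples $(c_0,\ldots,c_n)$ of smash products of weak equivalences between cofibrant orthogonal spectra, so it is itself a weak equivalence by the monoidal model structure. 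Proposition \ref{reedy_cofibrant_spectra} then delivers a weak equivalence on geometric realizations.

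No step poses a real obstacle; the only delicate point is justifying the commuting square in the second paragraph, which rests on the naturality of $\gamma_n$ in $\cat C$. This naturality is built into the construction in Theorem \ref{construction_of_THH} from the naturality of subdivision, of the realization/fixed-point commutation in Proposition \ref{cyclic_spectra_realization}, and of the Hill-Hopkins-Ravenel diagonal; so no further verification is needed beyond citing these results.
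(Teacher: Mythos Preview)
Your proposal is correct and follows essentially the same route as the paper: both arguments use Proposition \ref{cyclic_bar_latching} and Proposition \ref{reedy_cofibrant_spectra} to get a nonequivariant equivalence of realizations, Proposition \ref{cyclic_nerve_is_s1_cofibrant} to ensure $\Phi^{C_n}$ is already derived, and the natural isomorphisms $\gamma_n$ of Theorem \ref{construction_of_THH} to propagate the nonequivariant equivalence to all geometric fixed points, concluding via Proposition \ref{prop:geometric_fixed_pts_measure_equivs}. The only difference is the order of exposition.
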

\begin{proof}
It is easy to check that $N^\cyc_\bullet \cat C \to N^\cyc_\bullet \cat D$ is a levelwise stable equivalence. By Prop \ref{cyclic_bar_latching}, both simplicial spectra are Reedy cofibrant, so the map of realizations is an equivalence of nonequivariant spectra. By Prop \ref{cyclic_nerve_is_s1_cofibrant}, both of these realizations are cofibrant $S^1$-spectra, and by Thm \ref{construction_of_THH} each one is naturally equivalent its own geometric fixed points. It follows that the map of left-derived geometric fixed points
\[ \Phi^{C_n}|N^\cyc_\bullet \cat C| \ra \Phi^{C_n}|N^\cyc_\bullet \cat D| \]
is an equivalence for all $n \geq 1$. By Prop \ref{prop:geometric_fixed_pts_measure_equivs} the map $|N^\cyc_\bullet \cat C| \to |N^\cyc_\bullet \cat D|$ is therefore an $\mathcal F$-equivalence.
\end{proof}

\section{Tensors and duals of cyclotomic spectra.}

In this final section, we discuss how to tensor and dualize cyclotomic structures, and use this to prove Theorem \ref{intro_dual_equivariant_thm}.

\subsection{A general framework for dualizing cyclotomic structures.}\label{cyclotomic}
Recall that a \emph{cyclotomic spectrum} is an orthogonal $S^1$-spectrum $T$ with compatible maps of $S^1$-spectra for all $n \geq 1$
\[ c_n: \rho_n^* \Phi^{C_n} T \ra T \]
for which the composite map
\begin{equation}\label{eq:derived_cn}
\rho_n^* \Phi^{C_n} (cT) \ra \rho^*_n \Phi^{C_n} T \ra T
\end{equation}
is an $\mc F$-equivalence of $S^1$-spectra (Defn \ref{df:f_equiv}).
Here $c$ refers to cofibrant replacement in the stable model structure on orthogonal $S^1$-spectra, Prop \ref{prop:model_structure}.
To be more specific about the compatibility, we require that for all $m,n \geq 1$ the square
\[ \xymatrix @C=7em{
\rho_{mn}^* \Phi^{C_{mn}} X \ar[r]^-{c_{mn}} \ar[d]^-{\textup{it}} & X \\
\rho_m^* \Phi^{C_m} \rho_n^* \Phi^{C_n} X \ar[r]^-{\rho_m^*\Phi^{C_m} c_n} & \rho_m^* \Phi^{C_m} X \ar[u]_-{c_m} } \]
commutes.
The left vertical is the canonical iterated fixed points map described in \cite{blumberg2013homotopy}, Prop 2.4, and it is an isomorphism when $X$ is cofibrant as an $S^1$-spectrum.

A \emph{pre-cyclotomic spectrum} has all the same structure except that the map \eqref{eq:derived_cn} need not be an equivalence.
An \emph{op-pre cyclotomic spectrum} has the above structure, but every map has the opposite direction, except for the iterated fixed points map.

In contrast to this, we give a more restrictive definition:
\begin{df}
A \emph{tight cyclotomic spectrum} is a cofibrant $S^1$-spectrum with isomorphisms $\gamma_n: T \congar \rho_n^* \Phi^{C_n} T$ of $S^1$-spectra for all $n \geq 0$ compatible in the following way:
\[ \xymatrix @C=7em{
T \ar[r]^-{\gamma_{mn}}_-\cong \ar[d]^-{\gamma_m}_-\cong & \rho_{mn}^* \Phi^{C_{mn}} T \ar[d]^-{\textup{it}}_-\cong \\
\rho_m^* \Phi^{C_m} T \ar[r]^-{\rho_m^*\Phi^{C_m} \gamma_n}_-\cong & \rho_m^* \Phi^{C_m} \rho_n^* \Phi^{C_n} T } \]
\end{df}
Here ``cofibrant'' means in the stable model structure of Prop \ref{prop:model_structure}.
% This includes the cofibrant spectra in the $\mc F$-model structure too.
This implies that the geometric fixed points compute the left-derived geometric fixed points, i.e. the first map of \eqref{eq:derived_cn} is always an equivalence.
So a tight cyclotomic spectrum may be regarded as a cyclotomic spectrum by taking $c_n = \gamma_n^{-1}$ and forgetting that it is an isomorphism.
We can summarize most of the previous section in a single theorem:
\begin{thm}\label{recall_cyclic_nerves_are_tight}
If $R$ is an orthogonal ring spectrum, which is cofibrant as an orthogonal spectrum, then $THH(R)$ is a tight cyclotomic spectrum.
If $\cat C$ is a cofibrant spectral category, then $THH(\cat C)$ is a tight cyclotomic spectrum.
\end{thm}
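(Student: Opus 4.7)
The plan is to observe that this theorem is essentially a repackaging of the main results already established in the section, so the proof will be short. The ring spectrum case reduces to the spectral category case by viewing $R$ as a spectral category $\cat C_R$ with a single object and endomorphism spectrum $R$; cofibrancy of $R$ as an orthogonal spectrum is precisely cofibrancy of $\cat C_R$ in the sense defined just before Prop \ref{cyclic_nerve_is_s1_cofibrant}. So it suffices to treat $THH(\cat C)$ for a cofibrant spectral category $\cat C$, and verify the three conditions in the definition of a tight cyclotomic spectrum.

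First I would check cofibrancy of the underlying $S^1$-spectrum. This is exactly the content of Proposition \ref{cyclic_nerve_is_s1_cofibrant}: since $\cat C$ is cofibrant, the cyclic latching maps of $N^\cyc_\bullet \cat C$ are $C_n$-cofibrations, and hence $|N^\cyc_\bullet \cat C|$ is cofibrant as an $S^1$-spectrum via the cyclic skeletal filtration.

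Next I would produce the structure isomorphisms $\gamma_n$ and verify their compatibility. Both come directly from Theorem \ref{construction_of_THH}: that theorem constructs natural $S^1$-equivariant maps
\[ \gamma_r : THH(\cat C) \ra \rho_r^* \Phi^{C_r} THH(\cat C), \]
proves that the iterated-diagonal compatibility square commutes on the nose, and shows that each $\gamma_r$ is an \emph{isomorphism} whenever every mapping spectrum $\cat C(c_i,c_j)$ is cofibrant, which holds by assumption. Thus all three bullet points in the definition of tight cyclotomic spectrum are met.

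There is no real obstacle here; the nontrivial work has already been done earlier. The only thing to be careful about is confirming that the compatibility square in Theorem \ref{construction_of_THH}, which is phrased using the iterated fixed points map, is literally the same square that appears in the definition of tight cyclotomic spectrum, and that on cofibrant $S^1$-spectra the iterated fixed points map $\textup{it}$ is an isomorphism as required. Both points are immediate from the discussion in section \ref{gspectra_basics} and Proposition \ref{cyclic_nerve_is_s1_cofibrant}, completing the proof.
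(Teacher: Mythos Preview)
Your proposal is correct and matches the paper's approach exactly: the paper presents this theorem as a summary of the preceding section with no separate proof, and the ingredients you cite (Proposition~\ref{cyclic_nerve_is_s1_cofibrant} for cofibrancy, Theorem~\ref{construction_of_THH} for the isomorphisms $\gamma_r$ and their compatibility) are precisely the results being summarized. Your reduction of the ring case to the one-object spectral category case is the obvious and intended reading.
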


The point of these definitions is to dualize cyclotomic structures.
Our first result is
\begin{prop}
If $T$ is a tight cyclotomic spectrum and $T'$ is pre-cyclotomic then the function spectrum $F(T,T')$ has a natural pre-cyclotomic structure.
\end{prop}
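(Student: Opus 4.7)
The plan is to construct the structure map
\[ c_n^{F} : \rho_n^* \Phi^{C_n} F(T, T') \ra F(T, T') \]
as a three-stage composite and then to verify the compatibility square piece by piece. Since $T$ is tight and in particular cofibrant, the commutation map $\alpha$ between $\Phi^{C_n}$ and $\sma$ is an isomorphism on the pair $(F(T, T'), T)$. Combining this isomorphism with $\Phi^{C_n}$ applied to the evaluation $F(T, T') \sma T \to T'$ and passing across the $(\sma, F)$-adjunction produces a natural lax map
\[ \lambda_n : \Phi^{C_n} F(T, T') \ra F(\Phi^{C_n} T, \Phi^{C_n} T'). \]
The structure map is then defined as
\[ c_n^{F} := F(\gamma_n, c_n) \circ \rho_n^* \lambda_n, \]
where $\gamma_n : T \congar \rho_n^* \Phi^{C_n} T$ is the tight structure of $T$ used in the contravariant slot and $c_n : \rho_n^* \Phi^{C_n} T' \to T'$ is the pre-cyclotomic structure of $T'$ used in the covariant slot. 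Each of $\lambda_n$, $\gamma_n$, and $c_n$ is natural and $S^1$-equivariant, so $c_n^{F}$ is as well.

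The compatibility square relating $c_{mn}^{F}$ to $c_m^{F} \circ \rho_m^* \Phi^{C_m} c_n^{F} \circ \textup{it}$ is the substance of the proof. Expanding both sides using the definition of $c_k^{F}$ and inserting suitable intermediate arrows, I would split it into three sub-squares: a $\gamma$-square that commutes because $T$ is tight, a $c$-square that commutes because $T'$ is pre-cyclotomic, and a $\lambda$-square asserting that $\lambda_k$ is compatible with the iterated fixed points maps on both sides.

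The main obstacle is the $\lambda$-square, which is the internal-hom analogue of Proposition \ref{smash_iterated_commute}. I would derive it by transposing across the $(\sma, F)$-adjunction: a map $\Phi^{C_{mn}} F(T,T') \to F(\Phi^{C_m}\Phi^{C_n} T, \Phi^{C_m}\Phi^{C_n} T')$ corresponds to a map $\Phi^{C_{mn}} F(T,T') \sma \Phi^{C_m}\Phi^{C_n} T \to \Phi^{C_m}\Phi^{C_n} T'$, and under this correspondence the two composites in the $\lambda$-square become exactly the two composites in Proposition \ref{smash_iterated_commute} applied to $(F(T,T'),T)$, followed by $\Phi^{C_m}\Phi^{C_n}$ of evaluation. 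Cofibrancy of $T$ guarantees that every $\alpha$ and every iterated fixed points map that appears is an isomorphism, so the transposition is legitimate. As a clean alternative, the rigidity theorem (Theorem \ref{intro_rigidity}) can be applied to the two resulting natural transformations out of $\Phi^{C_m}\Phi^{C_n}$, reducing the check to free spectra where both sides are computable directly and visibly agree.
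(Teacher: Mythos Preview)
Your proposal is correct and matches the paper's proof essentially line for line: your map $\lambda_n$ is the paper's $\overline\alpha$, your definition $c_n^F = F(\gamma_n,c_n)\circ\rho_n^*\lambda_n$ is exactly the paper's structure map, and your reduction of the $\lambda$-square via the $(\sma,F)$-adjunction to Proposition~\ref{smash_iterated_commute} is precisely the paper's argument. The only cosmetic difference is bookkeeping---the paper packages the $\gamma$- and $c$-compatibilities together into one ``small square'' and isolates the adjoint rectangle, whereas you separate them into three pieces---and one harmless overstatement: the adjunction transposition is legitimate regardless of whether $\alpha$ is an isomorphism, so cofibrancy of $T$ is not actually needed at that step.
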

\begin{cor}
If $T$ is a tight cyclotomic spectrum then the functional dual $DT = F(T,\Sph)$ is pre-cyclotomic.
\end{cor}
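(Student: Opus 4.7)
The plan is to construct, for each $n \geq 1$, a natural pre-cyclotomic structure map $c_n^F$ on $F(T, T')$ using the tight cyclotomic structure on $T$, the pre-cyclotomic structure on $T'$, and an appropriate dual of the commutation map $\alpha$. Specifically, for any orthogonal $G$-spectra $X, Y$ I would first manufacture a \emph{dual commutation map}
$$\beta: \Phi^G F(X, Y) \ra F(\Phi^G X, \Phi^G Y)$$
as the adjoint of the composite
$$\Phi^G F(X, Y) \sma \Phi^G X \xrightarrow{\alpha} \Phi^G\bigl(F(X, Y) \sma X\bigr) \xrightarrow{\Phi^G\ev} \Phi^G Y.$$
I would then define
$$c_n^F := F(\gamma_n, c_n) \circ \beta: \rho_n^* \Phi^{C_n} F(T, T') \ra F(\rho_n^* \Phi^{C_n} T, \rho_n^* \Phi^{C_n} T') \ra F(T, T'),$$
where $\gamma_n: T \congar \rho_n^* \Phi^{C_n} T$ is applied contravariantly in the first slot and $c_n: \rho_n^* \Phi^{C_n} T' \to T'$ is applied covariantly in the second. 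The tightness of $T$ is essential: without it, $\gamma_n$ would run the wrong way for this contravariant precomposition.

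The main obstacle is verifying the pre-cyclotomic compatibility square for the $c_n^F$ as $m, n$ vary. Unpacking the definition, that square decomposes as a pasting of three subsquares: one coming from the tight cyclotomic compatibility of $\gamma$ on $T$, one coming from the pre-cyclotomic compatibility of $c$ on $T'$, and one expressing a compatibility of $\beta$ itself with iterated fixed points, namely the identity
$$F(\textup{it}, \textup{it}) \circ \beta \;=\; \beta_m \circ \Phi^{C_m} \beta_n \circ \textup{it}$$
as natural maps $\Phi^{C_{mn}} F(X, Y) \to F(\Phi^{C_m}\Phi^{C_n} X, \Phi^{C_m}\Phi^{C_n} Y)$. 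I would verify this third piece by taking adjoints: under the $\sma \dashv F$ adjunction, the desired identity becomes a diagram involving $\alpha$ and the evaluation map, and this commutes by pasting Proposition \ref{smash_iterated_commute} (the analogous compatibility for $\alpha$) with the naturality of $\ev$.

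Once the compatibility of $\beta$ is in hand, the pre-cyclotomic compatibility of the $c_n^F$ is a routine diagram chase of the three pasted squares, and naturality in both $T$ and $T'$ is immediate from the naturality of $\alpha$, $\beta$, and $\ev$. For the corollary, apply the proposition with $T' = \Sph$, which carries a canonical pre-cyclotomic structure: writing $\Sph = F_0 S^0$ with trivial $C_n$-action yields canonical isomorphisms $\Phi^{C_n}\Sph \cong \Sph$ which one checks are strictly compatible with iterated fixed points, making $\Sph$ (or, if needed for technical reasons, a fibrant replacement) pre-cyclotomic.
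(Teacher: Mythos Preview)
Your proposal is correct and follows essentially the same approach as the paper. Your map $\beta$ is exactly the paper's $\overline\alpha$, your definition of $c_n^F$ matches the paper's structure map, and your reduction of the compatibility square to Proposition~\ref{smash_iterated_commute} via adjunction is precisely the paper's argument; the only difference is that you organize the diagram chase as three pasted subsquares whereas the paper draws a single large diagram and isolates the ``big rectangle at the top'' as the nontrivial piece.
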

\begin{proof}
We define the structure map $c_r$ as the composite
\[ \rho_r^* \Phi^{C_r} F(T,T') \overset{\overline\alpha}\ra F(\rho_r^* \Phi^{C_r} T, \rho_r^* \Phi^{C_r} T') \overset{F(\gamma_r,c_r)} \ra F(T,T') \]
where $\overline\alpha$ is the ``restriction'' map adjoint to
\[ \rho_r^* \Phi^{C_r} F(T,T') \sma \rho_r^* \Phi^{C_r} T \overset{\alpha}\ra \rho_r^* \Phi^{C_r} (F(T,T') \sma T) \ra \rho_r^* \Phi^{C_r} T' \]
and $\alpha$ is the usual commutation of $\Phi^{C_r}$ with smash products.
By the usual rules for equivariant adjunctions, $c_r$ is automatically $S^1$-equivariant.
We verify that these maps are compatible.
Clearly they are natural in $T$ and $T'$, so in the diagram
\[ \resizebox{\textwidth}{!}{\xymatrix{
\rho_{mn}^* \Phi^{C_{mn}} F(T,T') \ar[r]^-{\overline\alpha} \ar[d]^-{\textup{it}}
 & F(\rho_{mn}^* \Phi^{C_{mn}} T,\rho_{mn}^* \Phi^{C_{mn}} T') \ar[r]^-{F(\id,\textup{it})}
 & F(\rho_{mn}^* \Phi^{C_{mn}} T,\rho_m^* \Phi^{C_m} \rho_n^* \Phi^{C_n} T') \ar@{<-}[d]_-\cong^-{F(\textup{it},\id)} \\
\rho_m^* \Phi^{C_m} \rho_n^* \Phi^{C_n} F(T,T') \ar[r]^-{\Phi^{C_m}\overline\alpha}
 & \rho_m^* \Phi^{C_m} F(\rho_n^* \Phi^{C_n} T,\rho_n^* \Phi^{C_n} T') \ar[r]^-{\overline\alpha} \ar[d]^-{F(\Phi^{C_m} \gamma_n,\Phi^{C_m} c_n)}
 & F(\rho_m^* \Phi^{C_m} \rho_n^* \Phi^{C_n} T,\rho_m^* \Phi^{C_m} \rho_n^* \Phi^{C_n} T') \ar[d]^-{F(\Phi^{C_m} \gamma_n,\Phi^{C_m} c_n)} \\
 & \rho_m^* \Phi^{C_m} F(T,T') \ar[r]^-{\overline\alpha}
 & F(\rho_m^* \Phi^{C_m} T,\rho_m^* \Phi^{C_m} T') \ar[d]^-{F(\gamma_m,c_m)} \\
 && F(T,T') }} \]
the small square automatically commutes.
The left-most and right-most paths compose to give the two maps we are trying to compare.
So, we just need to show that the big rectangle at the top commutes.
It is adjoint to
\[ \resizebox{\textwidth}{!}{\xymatrix{
\rho_{mn}^* \Phi^{C_{mn}} F(T,T') \sma \rho_{mn}^* \Phi^{C_{mn}} T \ar[d]^-{\textup{it} \sma \textup{it}} \ar[r]^-{\alpha}
 & \rho_{mn}^* \Phi^{C_{mn}} (F(T,T') \sma T) \ar[r] \ar[d]^-{\textup{it}}
 & \rho_{mn}^* \Phi^{C_{mn}} T' \ar[d]^-{\textup{it}} \\
\rho_m^* \Phi^{C_m} \rho_n^* \Phi^{C_n} F(T,T') \sma \rho_m^* \Phi^{C_m} \rho_n^* \Phi^{C_n} T \ar[r]^-{\alpha \circ \alpha}
 & \rho_m^* \Phi^{C_m} \rho_n^* \Phi^{C_n} (F(T,T') \sma T) \ar[r]
 & \rho_m^* \Phi^{C_m} \rho_n^* \Phi^{C_n} T'
}} \]
The right square is by naturality of the iterated fixed points map, and the left square is by Prop \ref{smash_iterated_commute}.
\end{proof}

We have chosen to state these results for tight cyclotomic spectra, because then every object we work with has pre-cyclotomic structure, as opposed to a mix of objects with pre-cyclotomic and op-pre-cyclotomic structure. If we freely allow ourselves to use both structures then the we get the following more general conclusion. It tells us that we have something close to, but not quite, a closed symmetric monoidal category of spectra with these structures.

\begin{prop}
If $X$ and $Y$ are op-pre cyclotomic spectra and $Z$ is a pre-cyclotomic spectrum then $X \sma Y$ is op-pre cyclotomic, $F(Y,Z)$ is pre-cyclotomic, and the adjunction
\[ F(X \sma Y,Z) \cong F(X,F(Y,Z)) \]
respects the pre-cyclotomic structure.
\end{prop}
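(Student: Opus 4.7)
The plan is to establish the three claims in sequence, following the template of the previous proposition's proof.

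First, for the op-pre cyclotomic structure on $X \sma Y$, I would define
\[ \gamma_n^{X \sma Y}: X \sma Y \overset{\gamma_n^X \sma \gamma_n^Y}\ra \rho_n^* \Phi^{C_n} X \sma \rho_n^* \Phi^{C_n} Y \overset{\alpha}\ra \rho_n^* \Phi^{C_n}(X \sma Y), \]
which is $S^1$-equivariant. The compatibility square relating $\gamma_{mn}^{X \sma Y}$ to $\rho_m^*\Phi^{C_m}\gamma_n^{X \sma Y} \circ \gamma_m^{X \sma Y}$ decomposes into two subsquares: the compatibility squares for $X$ and $Y$ separately, smashed together, followed by Proposition \ref{smash_iterated_commute}, which is precisely the statement that $\alpha$ commutes with the iterated fixed points map.

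For the pre-cyclotomic structure on $F(Y,Z)$, I would define $c_n^{F(Y,Z)}$ as the adjoint of the composite
\[ \rho_n^* \Phi^{C_n} F(Y,Z) \sma Y \overset{\id \sma \gamma_n^Y}\ra \rho_n^* \Phi^{C_n} F(Y,Z) \sma \rho_n^* \Phi^{C_n} Y \overset{\alpha}\ra \rho_n^* \Phi^{C_n}(F(Y,Z) \sma Y) \]
\[ \overset{\Phi^{C_n}\textup{ev}}\ra \rho_n^* \Phi^{C_n} Z \overset{c_n^Z}\ra Z. \]
The iterated-fixed-points compatibility follows by essentially the same diagram used in the previous proposition: expand into a four-row telescope, use naturality of the adjoint restriction map $\overline\alpha$ in both slots, and invoke Proposition \ref{smash_iterated_commute} for the one square involving $\alpha$ and $\textup{it}$. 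The only structural change from the previous proof is that $\gamma_n^Y$ now comes from an op-pre structure rather than an isomorphism from a tight one, but since the arrow still points from $Y$ into $\rho_n^* \Phi^{C_n} Y$, every arrow in the diagram retains its direction.

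For the adjunction, I would adjoint both structure maps down to the smash-product level. The structure map on $F(X \sma Y, Z)$, adjointed once, becomes a map $\rho_n^* \Phi^{C_n} F(X \sma Y, Z) \sma X \sma Y \ra Z$ built from $\id \sma \gamma_n^X \sma \gamma_n^Y$, a single $\alpha$, evaluation, and $c_n^Z$. The structure map on $F(X, F(Y,Z))$, adjointed twice, produces a map of the same type but built from two nested commutations. These agree by the associativity coherence for $\alpha$, namely $\alpha \circ (\alpha \sma \id) = \alpha \circ (\id \sma \alpha)$ as maps $\Phi^{C_n} A \sma \Phi^{C_n} B \sma \Phi^{C_n} C \ra \Phi^{C_n}(A \sma B \sma C)$. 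This coherence is formal from the coequalizer definition of $\Phi^{C_n}$, and can alternatively be deduced by applying Theorem \ref{intro_rigidity} to the three-fold functor $\sma \circ (\Phi^{C_n})^3$: both composites are nonzero natural transformations, hence both coincide with the unique such. The main obstacle is this associativity coherence for $\alpha$; once it is in hand, the adjunction compatibility reduces to naturality and the universal property of adjunction.
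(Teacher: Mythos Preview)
Your proposal is correct and follows essentially the same approach as the paper: the same definitions of the structure maps on $X \sma Y$ and $F(Y,Z)$, the same reduction of compatibility to Proposition~\ref{smash_iterated_commute}, and the same reduction of the adjunction claim to associativity of $\alpha$, proved either directly or via the rigidity theorem. The paper's proof is terser but otherwise identical in content.
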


\begin{proof}
The above proof generalizes to show that $F(Y,Z)$ is pre-cyclotomic, since we only used the maps $\gamma_n$ for $Y$ and $c_n$ for $Z$. For $X \sma Y$ we define the op-cyclotomic structure by
\[ \xymatrix{ X \sma Y \ar[r]^-{\gamma_n \sma \gamma_n} & \Phi^{C_n} X \sma \Phi^{C_n} Y \ar[r]^-\alpha & \Phi^{C_n} (X \sma Y) } \]
where the $\rho_n^*$s are suppressed.
By an easy diagram chase, the compatibility reduces again to Prop \ref{smash_iterated_commute}.
When we check that the adjunction preserves the cyclotomic structures, we reduce to the claim that the interchange map $\alpha$ has an associativity property. This can be proven from the definitions with a little bit of work, but it also follows effortlessly from the rigidity theorem.
\end{proof}

This analysis does not quite apply to the categories of pre-cyclotomic or cyclotomic spectra, because we get zig-zags when we try to define a cyclotomic structure on their tensor product. However this problem goes away if we restrict attention to cofibrant objects, so we can draw a conclusion about the homotopy category:

\begin{prop}\label{tensor_triangulated}
The homotopy categories of pre-cyclotomic spectra and of cyclotomic spectra from \cite{blumberg2013homotopy} have a tensor triangulated structure.
\end{prop}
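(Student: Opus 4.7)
My plan is to resolve the zig-zag problem noted just before the proposition by restricting to cofibrant objects in the Blumberg--Mandell model structure on (pre-)cyclotomic spectra, where the point-set constructions of the preceding proposition become applicable, and then deriving to the homotopy category.

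First, for cofibrant pre-cyclotomic spectra $X$ and $Y$, I would endow $X \sma Y$ with a pre-cyclotomic structure via the zig-zag
\[ \rho_n^* \Phi^{C_n}(X \sma Y) \overset{\alpha^{-1}}{\longleftarrow} \rho_n^* \Phi^{C_n} X \sma \rho_n^* \Phi^{C_n} Y \overset{c_n \sma c_n}{\longrightarrow} X \sma Y, \]
where $\alpha$ is an isomorphism by cofibrancy of one of the inputs. Compatibility with the iterated fixed points map is Proposition \ref{smash_iterated_commute}. For the internal hom $F(Y,Z)$ with $Y$ cofibrant, the preceding proposition already supplies a pre-cyclotomic structure. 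When both $X$ and $Y$ are genuinely cyclotomic, the $\mc F$-equivalence condition on the derived structure map of $X \sma Y$ will follow from the fact that $\alpha$ becomes a weak equivalence on cofibrant inputs and each of the $c_n$ for $X, Y$ is an $\mc F$-equivalence after cofibrant replacement.

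Next I would descend to the homotopy category. The smash product is a left Quillen bifunctor on the stable model structure on orthogonal $S^1$-spectra (Proposition \ref{prop:model_structure}), and the Blumberg--Mandell model structures on (pre-)cyclotomic spectra are compatible in the sense that $\sma$ and $F(-,-)$ descend to well-defined derived bifunctors on the homotopy category after cofibrant and fibrant replacement. The derived tensor-hom adjunction is inherited from the point-set adjunction of the preceding proposition. To verify the monoidal coherence (associator, braiding, unitor) and the compatibility of the adjunction with these, I would observe that every such natural transformation fits the hypotheses of Theorem \ref{intro_rigidity}: each is a natural transformation between iterated composites of $\Phi^{C_n}$ and $\sma$, so rigidity reduces the check to the subcategory of free spectra, where each coherence diagram becomes a classical statement for ordinary orthogonal spectra. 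Exactness of $\sma^{\mathbb L}$ in each variable and its commutation with the shift then follow from the Quillen bifunctor property of $\sma$ and the fact that suspension of $S^1$-spectra commutes with every $\Phi^{C_n}$ up to canonical isomorphism.

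The main obstacle will be the first step: confirming that the pre-cyclotomic structure placed on $X \sma Y$ is genuinely cyclotomic when $X$ and $Y$ are. The issue is that the zig-zag definition involves inverting $\alpha$ on the nose, so one must track how cofibrant replacement interacts with this inversion in order to show that the composite $\rho_n^* \Phi^{C_n}(c(X \sma Y)) \to \rho_n^* \Phi^{C_n}(X \sma Y) \to X \sma Y$ is an $\mc F$-equivalence. This should reduce to the corresponding property for $X$ and $Y$ separately via Proposition \ref{prop:geometric_fixed_pts_measure_equivs}, using that $\alpha$ is a weak equivalence on cofibrant inputs, but keeping track of both variables simultaneously requires some care.
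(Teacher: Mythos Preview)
Your core construction---invert $\alpha$ on cofibrant inputs to define the structure map on $X \sma Y$, and verify compatibility via Proposition \ref{smash_iterated_commute}---is exactly what the paper does. The paper's proof is considerably shorter after that point: once the compatibility square is checked (using \ref{smash_iterated_commute} for the left rectangle, the given compatibility squares for $X$ and $Y$ for the upper right, and naturality of $\alpha$ for the lower right), it simply observes that this smash product preserves colimits and cofibers of (pre-)cyclotomic spectra, and that is already enough to give a tensor triangulated structure on the homotopy category. There is no appeal to a closed structure, a derived adjunction, or a Quillen bifunctor statement on the cyclotomic model category; the coherence isomorphisms are inherited from orthogonal spectra and the rigidity theorem is not invoked again here.

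Two remarks on your additions. First, the claim that ``the preceding proposition already supplies a pre-cyclotomic structure'' on $F(Y,Z)$ for $Y$ cofibrant pre-cyclotomic is not quite right: that proposition requires $Y$ to be \emph{op}-pre-cyclotomic, and a pre-cyclotomic $Y$ cannot in general be turned around, since the $c_n$ are not isomorphisms. Fortunately none of this is needed for a tensor triangulated structure. Second, the obstacle you flag---checking that $X \sma Y$ is genuinely cyclotomic when $X$ and $Y$ are---dissolves once you note that a cofibrant (pre-)cyclotomic spectrum is already cofibrant as an $S^1$-spectrum (the Lemma just above), so each $c_n$ is on the nose an $\mc F$-equivalence between cofibrant objects, and $c_n \sma c_n$ is then an $\mc F$-equivalence by the Quillen bifunctor property of $\sma$ on $S^1$-spectra. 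The paper leaves this implicit as well.
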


\begin{proof}
For simplicity we suppress $\rho_n^*$.
If $X$ and $Y$ are cofibrant (pre-)cyclotomic spectra, we make $X \sma Y$ into a (pre-)cyclotomic spectrum using the structure maps
\[ \xymatrix{ \Phi^{C_n}(X \sma Y) \ar@{<-}[r]^-\alpha_-\cong & \Phi^{C_n} X \sma \Phi^{C_n} Y \ar[r]^-{c_n \sma c_n} & X \sma Y } \] 
The relevant compatibility square is
\[ \xymatrix @C=6em{
\Phi^{C_{mn}}(X \sma Y) \ar[dd]^-{\textup{it}} \ar@{<-}[r]^-\alpha_-\cong &
 \Phi^{C_{mn}} X \sma \Phi^{C_{mn}} Y \ar[r]^-{c_{mn} \sma c_{mn}} \ar[d]^-{\textup{it} \sma \textup{it}} &
 X \sma Y \\
&
 \Phi^{C_m} \Phi^{C_n} X \sma \Phi^{C_m} \Phi^{C_n} Y \ar[r]^-{\Phi^{C_m} c_n \sma \Phi^{C_m} c_n} \ar[d]^-\alpha &
 \Phi^{C_m} X \sma \Phi^{C_m} Y \ar[u]^-{c_m \sma c_m} \\
\Phi^{C_m} \Phi^{C_n} (X \sma Y) \ar@{<-}[r]^-{\Phi^{C_m} \alpha}_-\cong &
 \Phi^{C_m} (\Phi^{C_n} X \sma \Phi^{C_n} Y) \ar[r]^-{\Phi^{C_m}(c_n \sma c_n)} &
 \Phi^{C_m} (X \sma Y) \ar@{<-}[u]^-\alpha_-\cong
} \]
Again Prop \ref{smash_iterated_commute} gives us the left-hand rectangle, the top-right square is the smash product of the compatibility squares for $X$ and $Y$, and the bottom-right commutes by naturality of $\alpha$. It is straightforward to check that this smash product preserves colimits and cofibers of (pre-)cyclotomic spectra, so this gives the desired tensor triangulated structure on the homotopy category.
\end{proof}

\begin{rmk}
The analogue of this theorem for $p$-pre-cyclotomic spectra and $p$-cyclotomic spectra is also true, and it is much easier.
\end{rmk}

Returning to the pre-cyclotomic structure on $F(T,T')$, our main example of interest will be when $T = |N^\cyc \cat C|$ is the cyclic nerve of a ring or category.
We have just proven that $F(|N^\cyc \cat C|,T')$ is a pre-cyclotomic spectrum. 
It is also the totalization of the cocyclic $S^1$-spectrum
\[ Y^k = F(N^\cyc_k \cat C,T') \]
To be precise, the $S^1$ is acting only on the $T'$, and $\mathbf\Lambda$ is acting by the dual of the $\mathbf\Lambda^\op$ action on $N^\cyc_k \cat C$.
This puts two commuting $S^1$-actions on the totalization, but we restrict attention to the diagonal $S^1$-action, because this is the action that agrees with the pre-cyclotomic structure we just defined.

In order to compare this to the cocyclic spectrum $\Sigma^\infty_+ X^{\bullet + 1}$, we will need to describe our cyclotomic structure maps using only the cocyclic structure on $F(|N^\cyc \cat C|,T')$:
\begin{prop}\label{cyclotomic_str_in_terms_of_tot}
The cyclotomic structure map on $\Tot(Y^\bullet) \cong F(|N^\cyc \cat C|,T')$ is equal to the composite of $S^1$-equivariant maps
\begin{eqnarray*}
\rho_r^* \Phi^{C_r} \Tot(F(N^\cyc_\bullet \cat C,T')) 
 &\overset{D_r}\ra & \rho_r^* \Phi^{C_r} \Tot(F(\sd_r N^\cyc_\bullet \cat C,T')) \\
 &\ra & \rho_r^* \Tot(\Phi^{C_r} F(\sd_r N^\cyc_\bullet \cat C,T')) \\
 &\overset{\overline\alpha}\ra & \rho_r^* \Tot(F(P_r \Phi^{C_r} \sd_r N^\cyc_\bullet \cat C,\Phi^{C_r} T')) \\
 &\congar & \Tot(F(\Phi^{C_r} \sd_r N^\cyc_\bullet \cat C,\rho_r^* \Phi^{C_r} T')) \\
 &\overset{F(\Delta,c_r)}\ra & \Tot(F(N^\cyc_\bullet \cat C,T')) \\
\end{eqnarray*}
where the undecorated map is the interchange of Prop \ref{tot_phi_interchange}.
\end{prop}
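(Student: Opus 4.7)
The plan is to verify the equality of two natural $S^1$-equivariant maps $\rho_r^* \Phi^{C_r} F(T, T') \to F(T, T')$, where $T = |N^{\cyc}_\bullet \cat C|$: on one side the pre-cyclotomic structure map $F(\gamma_r, c_r) \circ \overline\alpha$ defined in the previous proposition, and on the other the five-step composite stated here. Throughout I would use the natural isomorphism $F(|N^{\cyc}_\bullet \cat C|, T') \cong \Tot F(N^{\cyc}_\bullet \cat C, T')$, which is $S^1$-equivariant because $S^1$ acts through $|-|$ on the source side and through the diagonal $\mathbf\Lambda$- and $T'$-actions on the target side.

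First I would decompose $\gamma_r$. By the proof of Thm \ref{construction_of_THH}, $\gamma_r$ factors as
\[ |N^{\cyc}_\bullet \cat C| \overset{|\Delta|}\ra |\Phi^{C_r} \sd_r N^{\cyc}_\bullet \cat C| \congar \rho_r^* |P_r \Phi^{C_r} \sd_r N^{\cyc}_\bullet \cat C| \overset{D_r^{-1}}\congar \rho_r^* \Phi^{C_r} |N^{\cyc}_\bullet \cat C|, \]
where the middle homeomorphism reinterprets the cyclic structure coming from $P_r$. Dualizing into $T'$ and using the $\Tot$-vs.-$|-|$ adjunction, the composite $F(\gamma_r, \id)$ corresponds, on the cocyclic side, to the composition of $F(\Delta, \id)$ with the $D_r$ identification and the $P_r$-vs.-cyclic reinterpretation. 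These are precisely steps $(1)$, $(4)$ and the $F(\Delta, -)$ portion of step $(5)$ in the claimed composite.

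Next I would account for the two remaining pieces of the original definition, namely $\overline\alpha$ and $F(\id, c_r)$. The post-composition with $c_r$ in the target clearly matches the $F(\id, c_r)$ half of step $(5)$, using naturality of $\Tot$ and $F(-,-)$ in the second argument. The main geometric content is that $\overline\alpha$ for $F(|N^{\cyc}_\bullet \cat C|, T')$ agrees with $\overline\alpha$ applied levelwise in the cocyclic direction, once one has interchanged $\Phi^{C_r}$ with $\Tot$. That is, I would prove that the square
\[ \xymatrix @C=1.5em{
\Phi^{C_r} \Tot F(A_\bullet, B) \ar[r] \ar[d]_-{\overline\alpha} & \Tot \Phi^{C_r} F(A_\bullet, B) \ar[d]^-{\overline\alpha} \\
F(\Phi^{C_r} \Tot A_\bullet, \Phi^{C_r} B) \ar[r] & \Tot F(\Phi^{C_r} A_\bullet, \Phi^{C_r} B) } \]
commutes, where the top horizontal is the interchange of Prop \ref{tot_phi_interchange} and the bottom horizontal is the canonical comparison coming from the interchange together with $\Phi^{C_r}(A_\bullet \sma B) \overset\alpha\la \Phi^{C_r} A_\bullet \sma \Phi^{C_r} B$. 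This square commutes by taking adjoints and appealing to associativity of the commutation map $\alpha$ between $\Phi^{C_r}$ and $\sma$. Once this compatibility is in hand, steps $(2)$ and $(3)$ of the stated composite exactly implement $\overline\alpha$ inside the totalization, and the full composite reassembles into $F(\gamma_r, c_r) \circ \overline\alpha$ via naturality in each coordinate and the factorization of $\gamma_r$ above.

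The step I expect to be the main obstacle is the commutativity of the $\overline\alpha$--interchange square just displayed: tracking the adjunctions through the equalizer defining $\Tot$ and through the coequalizer defining $\Phi^{C_r}$ requires care with where the $\rho_r^*$ gets absorbed. I would sidestep a direct calculation by invoking the rigidity framework of section \ref{rigidity_section}: both composites in the square are natural transformations between functors that on cofibrant inputs are of the form $\sma \circ (\Phi^{C_r})^k$, so their equality can be reduced to checking agreement on a single free-spectrum input, where the interchange and $\overline\alpha$ are both built from the universal $\alpha$ by the same formula. The remaining diagrammatic checks — naturality of $\overline\alpha$, of $D_r$, and of the $P_r$ reinterpretation — are then routine.
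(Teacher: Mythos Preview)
Your overall strategy matches the paper's: set up a large comparison diagram, observe that most squares commute by naturality of $D_r$, of $\overline\alpha$, and of the $P_r$ reinterpretation, and isolate the one nontrivial compatibility between $\overline\alpha$ and the $\Phi^{C_r}$/$\Tot$ interchange. Your identification of this as the crux is correct, and your first instinct---take adjoints and reduce to naturality and associativity of $\alpha$---is exactly what the paper does. (Minor slip: in the lower-left corner of your square you want $|A_\bullet|$, not $\Tot A_\bullet$, since $A_\bullet = \sd_r N^{\cyc}_\bullet \cat C$ is simplicial.)

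The proposed shortcut through the rigidity framework, however, does not go through. The rigidity theorem of section \ref{rigidity_section} applies to the functor $\sma \circ (\Phi^G)^k$ and relies on the levelwise \emph{surjection} $\bigvee_V F_V X(V) \to X$ to reduce a natural transformation to its behavior on free spectra. The functors in your square are built from $F(-,-)$ and $\Tot$, which are limits rather than colimits; they are not isomorphic to iterated smashes of geometric fixed points even on cofibrant or free inputs, and there is no analogous surjectivity argument available. So rigidity gives no leverage here, and you cannot avoid the direct chase. The paper carries this out by mapping in from the generators $F_{W^{C_r}} S^0 \sma \Map_*^{C_r}(|X_\bullet|,\sh^W T)$ of the coequalizer defining $\Phi^{C_r}$ and mapping out to the factors $F(F_{V^{C_r}} S^0 \sma \Delta^k_+ \sma X_k(V)^{C_r},\Phi^{C_r} T)$ of the equalizer defining $\Tot$, then comparing the two adjoint composites explicitly; the ingredients are precisely functoriality of $\Phi^{C_r}$, naturality of $\alpha$ and $\ev$, and associativity of $\alpha$, as you anticipated.
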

\begin{proof}
We compare to the structure map we defined above:
\[ \resizebox{\textwidth}{!}{\xymatrix @C=2em{
\rho_r^* \Phi^{C_r} F(|N^\cyc_\bullet \cat C|,T') \ar[rr]^-\cong \ar[d]^-{\overline\alpha} \ar[rd]^-{D_r}_-\cong
 &
 & \rho_r^* \Phi^{C_r} \Tot(F(N^\cyc_\bullet \cat C,T')) \ar[d]^-{D_r} \\
 F(\rho_r^* \Phi^{C_r} |N^\cyc_\bullet \cat C|,\rho_r^* \Phi^{C_r} T') \ar[rd]^-{D_r}_-\cong
 & \rho_r^* \Phi^{C_r} F(|\sd_r N^\cyc_\bullet \cat C|,T') \ar[d]^-{\overline\alpha} \ar[r]^-\cong
 & \rho_r^* \Phi^{C_r} \Tot(F(\sd_r N^\cyc_\bullet \cat C,T')) \ar[d] \\
 & F(\rho_r^* \Phi^{C_r} |\sd_r N^\cyc_\bullet \cat C|,\rho_r^* \Phi^{C_r} T') \ar[d]^-\cong
 & \rho_r^* \Tot(\Phi^{C_r} F(\sd_r N^\cyc_\bullet \cat C,T')) \ar[d]^-{\overline\alpha} \\
 & F(\rho_r^* |P_r \Phi^{C_r} \sd_r N^\cyc_\bullet \cat C|,\rho_r^* \Phi^{C_r} T') \ar[d]^-\cong \ar[r]^-\cong
 & \rho_r^* \Tot(F(P_r\Phi^{C_r} \sd_r N^\cyc_\bullet \cat C,\Phi^{C_r} T')) \ar[d]^-\cong \\
 & F(|\Phi^{C_r} \sd_r N^\cyc_\bullet \cat C|,\rho_r^* \Phi^{C_r} T') \ar[d]^-{F(\Delta,c_r)} \ar[r]^-\cong
 & \Tot(F(\Phi^{C_r} \sd_r N^\cyc_\bullet \cat C,\rho_r^* \Phi^{C_r} T')) \ar[d]^-{F(\Delta,c_r)} \\
 & F(|N^\cyc_\bullet \cat C|,T') \ar[r]^-\cong
 & \Tot(F(N^\cyc_\bullet \cat C,\Phi^{C_r} T'))
}} \]
Most of these squares commute easily.
The nontrivial one in the middle can be simplified to the following: if $X_\bullet$ is a simplicial $C_r$-spectrum and $T$ is a $C_r$-spectrum then
% \[ \xymatrix{
% \Phi^{C_r} F(|X_\bullet|, T) \ar[r]^-\cong \ar[d]^-{\overline\alpha} & \Phi^{C_r} \Tot(F(X_\bullet,T)) \ar[d] \\
 % F(\Phi^{C_r} |X_\bullet|, \Phi^{C_r} T) \ar[d]^-\cong & \Tot(\Phi^{C_r} F(X_\bullet,T)) \ar[d]^-{\overline\alpha} \\
 % F(|\Phi^{C_r} X_\bullet|, \Phi^{C_r} T) \ar[r]^-\cong & \Tot(F(\Phi^{C_r} X_\bullet,\Phi^{C_r} T))
% } \]
the middle rectangle of
\[ \xymatrix @R=1em{
F_{W^{C_r}} S^0 \sma \Map_*^{C_r}(|X_\bullet|, \sh^W T) \ar[d] \\
 \Phi^{C_r} F(|X_\bullet|, T) \ar[r]^-\cong \ar[d]^-{\overline\alpha} & \Phi^{C_r} \Tot(F(X_\bullet,T)) \ar[d] \\
 F(\Phi^{C_r} |X_\bullet|, \Phi^{C_r} T) \ar[d]^-\cong & \Tot(\Phi^{C_r} F(X_\bullet,T)) \ar[d]^-{\overline\alpha} \\
 F(|\Phi^{C_r} X_\bullet|, \Phi^{C_r} T) \ar[r]^-\cong & \Tot(F(\Phi^{C_r} X_\bullet,\Phi^{C_r} T)) \ar[d] \\
 & F(F_{V^{C_r}} S^0 \sma \Delta^k_+ \sma X_k(V)^{C_r}, \Phi^{C_r} T)
} \]
commutes. Here $\sh^W T$ is shorthand for the mapping spectrum $F(F_W S^0,T)$, which is used in the following standard formula for level $W$ of a mapping spectrum:
\[ F(|X_\bullet|, T)(W) \cong F(|X_\bullet|, \sh^W T)(0) \cong \Map_*(|X_\bullet|, \sh^W T) \]
Now, it suffices to show that the composites from the top to the bottom of our rectangle are identical, for any $C_r$-representations $V$ and $W$ and any integer $k \geq 0$.
For the left-hand branch it is easy to check this is adjoint to the composite
\[ \xymatrix @R=1em{
F_{V^{C_r}} S^0 \sma \Delta^k_+ \sma X_k(V)^{C_r} \sma F_{W^{C_r}} S^0 \sma \Map_*^{C_r}(|X_\bullet|, \sh^W T) \ar[d]^-{\textup{include into }\Phi^{C_r}} \\
\Phi^{C_r} (\Delta^k_+ \sma X_k) \sma \Phi^{C_r}F(|X_\bullet|,T) \ar[d]^-{\textup{include into }|X_\bullet|} \\
\Phi^{C_r} |X_\bullet| \sma \Phi^{C_r}F(|X_\bullet|,T) \ar[d]^-\alpha \\
\Phi^{C_r} (|X_\bullet| \sma F(|X_\bullet|,T)) \ar[d]^-{\Phi^{C_r}(\ev)} \\
\Phi^{C_r} T
} \]
A careful trace through the diagram in Prop \ref{tot_phi_interchange} shows that the right-hand branch is the composite
\[ \begin{array}{rcl}
F_{W^{C_r}} S^0 \sma \Map_*^{C_r}(|X_\bullet|, \sh^W T)
&\overset{\textup{restrict}}\ra & F_{W^{C_r}} S^0 \sma \Map_*^{C_r}(\Delta^k_+ \sma X_k, \sh^W T) \\
&\overset{\textup{assembly}}\ra & F(\Delta^k_+, F_{W^{C_r}} S^0 \sma \Map_*^{C_r}(X_k, \sh^W T)) \\
&\overset{\textup{include}}\ra & F(\Delta^k_+, \Phi^{C_r} F(X_k, T)) \\
&\overset{\overline\alpha}\ra & F(\Delta^k_+, F(\Phi^{C_r} X_k, \Phi^{C_r} T)) \\
&\overset{\textup{include}}\ra & F(\Delta^k_+, F(F_{V^{C_r}} S^0 \sma X_k(V)^{C_r}, \Phi^{C_r} T))
\end{array} \]
The adjoint of this map does indeed agree with the first, by a very long diagram-chase.
The essential ingredients are functoriality of $\Phi^{C_r}$, naturality of $\alpha$ and $\ev$, and associativity of $\alpha$.
\end{proof}

Now we know that $F(T,T')$ has a pre-cyclotomic structure.
This won't be very useful unless we can make cofibrant and fibrant replacements of $T$ and $T'$, respectively, while preserving that structure.
For this task, we use the model structure on cyclotomic and pre-cyclotomic spectra defined in \cite{blumberg2013homotopy}.
It has following attractive property.
\begin{lem}
If $T$ is cofibrant or fibrant in the model* category on (pre)cyclotomic spectra, then it is also cofibrant or fibrant, respectively, as an orthogonal $S^1$-spectrum in the $\mc F$-model structure.
\end{lem}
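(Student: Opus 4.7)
The plan is to unpack the construction of the model* structure on (pre)cyclotomic spectra in \cite{blumberg2013homotopy} and check how the forgetful functor $U$ to orthogonal $S^1$-spectra interacts with it. The crucial input is that Blumberg--Mandell set up their model* structure as a transferred structure along $U$: by definition, a map of (pre)cyclotomic spectra is a weak equivalence or a fibration precisely when its image under $U$ is an $\mc F$-equivalence or an $\mc F$-fibration of $S^1$-spectra. The fibrancy half of the lemma follows immediately, since fibrancy of $T \to *$ in the (pre)cyclotomic model* structure is definitionally the fibrancy of $UT \to *$ in $S^1$-spectra.

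For the cofibrancy half, I would argue as follows. The generating (acyclic) cofibrations of the (pre)cyclotomic model* structure are produced by applying the left adjoint $\mathbb F$ of $U$ to the generating (acyclic) $\mc F$-cofibrations of $S^1$-spectra. A cofibrant $T$ is therefore a retract of an $\mathbb F(I)$-cell complex in (pre)cyclotomic spectra, where $I$ is the set of generating $\mc F$-cofibrations. Since in Blumberg--Mandell's setup the relevant pushouts and transfinite compositions are computed at the level of underlying $S^1$-spectra, applying $U$ to such a cell complex yields a retract of a relative cell complex in $S^1$-spectra whose attaching maps are of the form $U\mathbb F(i)$ with $i \in I$. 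It therefore suffices to check that each $U\mathbb F(i)$ is an $\mc F$-cofibration of $S^1$-spectra.

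This last verification is the only substantial content, and it is essentially the hypothesis needed for Kan's transfer theorem to produce a valid model* structure in \cite{blumberg2013homotopy} in the first place. Concretely, $U\mathbb F(X)$ admits an explicit description as a wedge of $S^1$-spectra built from iterated applications of the pre-cyclotomic structure functors $\rho_n^* \Phi^{C_n}$ to $X$ (with additional identifications in the cyclotomic case), and each summand is an $\mc F$-cofibration whenever $X$ is a generating cell. The main obstacle in spirit is this explicit identification of $U\mathbb F$ on generating cells, but in practice it is carried out in \cite{blumberg2013homotopy} as part of their construction, so we may simply extract and cite it. Combining these observations gives both halves of the lemma.
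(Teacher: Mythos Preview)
Your proposal is correct and follows essentially the same route as the paper. Both arguments observe that the fibrant case is definitional, and for the cofibrant case both reduce to checking that the underlying spectrum of the free (pre)cyclotomic spectrum on a cofibrant $S^1$-spectrum is again cofibrant; the paper phrases this as ``the monad $\mathbb C X = \bigvee_{n\geq 1}\rho_n^*\Phi^{C_n} X$ preserves cofibrant objects,'' and verifies it directly by noting that wedge sums, geometric fixed points, and change of groups each preserve cofibrations, whereas you defer this verification to \cite{blumberg2013homotopy}.
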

\begin{proof}
The fibrant part is true by definition.
For the cofibrant part it suffices to check that the monad
\[ \C X = \bigvee_{n \geq 1} \rho_n^* \Phi^{C_n} X \]
preserves cofibrant objects in the $\mc F$-model structure.
This is true because wedge sums, geometric fixed points, and change of groups all preserve cofibrations.
\end{proof}

In light of this fact, we can replace $T'$ with a fibrant cyclotomic spectrum $fT'$, resulting in the pre-cyclotomic spectrum $F(T,fT')$ whose underlying $S^1$-spectrum has the homotopy type of the derived mapping spectrum from $T$ to $T'$ (i.e. the first input is cofibrant and the second input is fibrant).
Specializing to $T' = \Sph$ gives a pre-cyclotomic structure on the dual $F(T,f\Sph)$.

\begin{rmk}
If $T$ is finite as a genuine $S^1$ spectrum, then $F(T,f\Sph)$ is actually cyclotomic, not just pre-cyclotomic.
In general, however, this is not true.
One can check that $T = \Sigma^\infty_+ \mathbb{RP}^\infty$ gives a counterexample.
In the next section we will consider an example where $T$ is infinite, but $F(T,f\Sph)$ is still cyclotomic, mainly for reasons of connectivity.
\end{rmk}

\subsection{The equivariant duality between $THH(DX)$ and $\Sigma^\infty_+ LX$.}

Let $X$ be a finite based CW complex and let $DX = F(X_+,\Sph)$ denote its Spanier-Whitehead dual.
Though $\Sph$ is not fibrant, $X$ is compact, so $DX$ has the correct homotopy type.
It is also finite, of course, but it is no longer compact, and this slightly complicates our proof below.

$DX$ is a commutative ring with multiplication given by the dual of the diagonal map of $X$. Likewise, the spectrum $\ti DX = F(X,\Sph)$ has a commutative multiplication given by the dual of the smash diagonal $X \to X \sma X$. It does not have a unit, but we can make $\Sph \vee \ti DX$ into a ring spectrum by having $\Sph$ act as the unit. The levelwise fiber sequence of spectra
\[ F(X,\Sph) \ra F(X_+,\Sph) \ra \Sph \]
preserves the multiplications, and this allows us to form an equivalence of ring spectra
\[ \Sph \vee \ti DX \simar DX. \]
Let $c\ti DX$ denote cofibrant replacement of $DX$ as a unitless ring, so that
\[ c DX := \Sph \vee c\ti DX \ra \Sph \vee \ti DX \]
is a particularly nice cofibrant replacement of ring spectra.
We'll take as our example of a tight cyclotomic spectrum
\[ T = THH(cDX) \]
We recall that this cofibrant replacement ensures that $THH(cD(-))$ is homotopy invariant (Proposition \ref{prop:thh_equivariantly_derived}). Our starting point is the following consequence of Prop \ref{LX_is_derived_tot}. In its statement, we assume implicitly that cofibrant replacements are taken before each application of $D$ or $THH$.

\begin{thm}[\cite{kuhn2004mccord},\cite{campbell2014derived}]\label{nonequivariant_duality_thm}
When $X$ is a finite simply-connected CW complex, there is an equivalence of spectra with an $S^1$-action
\[ D(THH(DX)) \simeq THH(\Sigma^\infty_+ \Omega X) \simeq \Sigma^\infty_+ LX \]
in which $LX = \Map(S^1,X)$ is the free loop space.
\end{thm}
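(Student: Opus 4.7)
The plan is to realize both sides as related to a single cocyclic spectrum and appeal to Prop \ref{LX_is_derived_tot}. First, I would present $THH(cDX)$ as the geometric realization of the cyclic bar construction $|N^\cyc_\bullet cDX|$, as in Thm \ref{recall_cyclic_nerves_are_tight}. Since $cDX$ is cofibrant and the cyclic latching maps are cofibrations (Prop \ref{cyclic_bar_latching}, Prop \ref{cyclic_nerve_is_s1_cofibrant}), the cyclic spectrum $N^\cyc_\bullet cDX$ is Reedy cofibrant. Applying the internal hom $F(-,f\Sph)$ into a fibrant sphere spectrum then produces a Reedy fibrant cocyclic spectrum whose totalization computes $D(THH(cDX))$, and the canonical map $F(|N^\cyc_\bullet cDX|,f\Sph) \cong \Tot(F(N^\cyc_\bullet cDX,f\Sph))$ is $S^1$-equivariant for the diagonal circle action.

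Next I would identify this cocyclic spectrum levelwise with $\Sigma^\infty_+ X^{\bullet+1}$. Because $X$ is a finite CW complex, $cDX$ is a finite cofibrant spectrum, the smash powers $(cDX)^{\sma(n+1)}$ are finite and cofibrant, and Spanier-Whitehead duality gives natural equivalences $F((cDX)^{\sma(n+1)},f\Sph) \simeq \Sigma^\infty_+ X^{n+1}$. The crucial compatibility is that the multiplication $cDX \sma cDX \to cDX$ is dual to the diagonal $X \to X \times X$ (up to homotopy) and the unit $\Sph \to cDX$ is dual to the projection $X \to \ast$; these are precisely the cocyclic operators underlying $\Sigma^\infty_+ \Map(S^1_\bullet, X)_+$. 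After Reedy fibrant replacement, this gives an equivalence of cocyclic spectra, hence an $S^1$-equivariant equivalence on totalizations. By Prop \ref{LX_is_derived_tot}, when $X$ is simply-connected the derived totalization of $\Sigma^\infty_+ X^{\bullet+1}$ is $\Sigma^\infty_+ LX$, yielding the first equivalence $D(THH(DX)) \simeq \Sigma^\infty_+ LX$.

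For the second equivalence $\Sigma^\infty_+ LX \simeq THH(\Sigma^\infty_+ \Omega X)$, I would use the classical observation that if $M$ is a topological monoid then the cyclic bar construction on the ring spectrum $\Sigma^\infty_+ M$ is levelwise $\Sigma^\infty_+ M^{n+1}$, so $THH(\Sigma^\infty_+ M) \simeq \Sigma^\infty_+ B^\cyc M$. Taking $M$ to be a cofibrant model for $\Omega X$ (say the Moore loops or a Kan-loop model via simplicial resolution), the $S^1$-equivariant equivalence $B^\cyc \Omega X \simeq LX$ for $X$ connected finishes this half.

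The main obstacle will be bookkeeping the $S^1$-equivariance of all the identifications at once, in particular showing that the Spanier-Whitehead duality isomorphisms $F((cDX)^{\sma(n+1)},f\Sph) \simeq \Sigma^\infty_+ X^{n+1}$ assemble into a cocyclic map, not just a levelwise equivalence. This reduces to comparing two natural cocyclic structures on the same underlying cosimplicial spectrum, and since the cocyclic operators are generated by the ring operations (multiplication, unit, cyclic rotation) together with their duals, the verification is essentially a diagram chase using associativity and naturality of the Spanier-Whitehead duality pairing together with the structure already used in the proof of Prop \ref{LX_is_derived_tot}. The finiteness of $X$ is what makes the levelwise duality an honest equivalence, and the simple-connectedness is what ensures the coskeletal tower of the totalization converges, so both hypotheses enter exactly where expected.
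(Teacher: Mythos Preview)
Your proposal is correct and follows essentially the same route as the paper: the paper constructs the cocyclic spectrum $Y^k = F((cDX)^{\sma(k+1)},f\Sph)$ as the dual of the Reedy cofibrant cyclic bar construction, builds an explicit levelwise evaluation map $Z^\bullet = \Sigma^\infty_+ X^{\bullet+1} \to Y^\bullet$ (which is a levelwise equivalence for finite $X$, so $Y^\bullet$ serves as a Reedy fibrant replacement of $Z^\bullet$), and then invokes Prop~\ref{LX_is_derived_tot}. Your anticipated obstacle---checking that the duality maps assemble cocyclically---is exactly the diagram chase the paper carries out with the evaluation pairing; the second equivalence $THH(\Sigma^\infty_+ \Omega X)\simeq \Sigma^\infty_+ LX$ is taken as known in the paper, and your sketch via $B^\cyc \Omega X \simeq LX$ is the standard argument.
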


\begin{rmk}
If $M$ is a manifold then $DM \simeq M^{-TM}$ is a Thom spectrum.
But the analysis of \cite{blumberg2008topological} does not apply, because the multiplication on $M^{-TM}$ does not arise from the normal bundle $M \to BO$ being a loop map.
\end{rmk}

We will spend the rest of this section proving a more highly-structured version of that result:
\begin{thm}
Let $f\Sph$ be a fibrant replacement of $\Sph$ as a cyclotomic spectrum.
Then for every unbased space $X$ there is a natural map of pre-cyclotomic spectra
\[ \Sigma^\infty_+ LX \ra F(THH(cDX),f\Sph) \]
The left-hand side is always cyclotomic.
When $X$ is a finite simply-connected CW complex, the right-hand side is cyclotomic and the map is an $\mc F$-equivalence.
\end{thm}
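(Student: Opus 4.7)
The strategy is to build the map at the level of cocyclic spectra, verify its pre-cyclotomic nature using the framework developed in Section 5.1, and finally upgrade to an $\mc F$-equivalence under the simply-connected hypothesis. First, identify $F(THH(cDX), f\Sph) \cong \Tot F(N^\cyc_\bullet cDX, f\Sph)$; since $N^\cyc_\bullet cDX$ is Reedy cofibrant (Prop.\ \ref{cyclic_bar_latching}) and $f\Sph$ is fibrant, this cocyclic spectrum is automatically Reedy fibrant. Compose the natural map $\Sigma^\infty_+ LX \to \Tot R\Sigma^\infty_+ X^{\bullet+1}$ of Prop.\ \ref{LX_is_derived_tot} with the map of cocyclic spectra induced levelwise by Spanier--Whitehead coevaluation
\[ \Sigma^\infty_+ X^{n+1} \to F((cDX)^{\sma(n+1)}, \Sph) \to F((cDX)^{\sma(n+1)}, f\Sph). \]
These levelwise maps commute with the cocyclic structure because the multiplication and unit of $cDX$ are SW-dual to the diagonal and constant map on $X_+$; taking totalizations then produces the desired natural $S^1$-equivariant map, valid for any unbased $X$.

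Next I verify the map is pre-cyclotomic. Proposition \ref{cyclotomic_str_in_terms_of_tot} writes the pre-cyclotomic structure map on $\Tot F(N^\cyc_\bullet cDX, f\Sph)$ as an explicit composite built from the subdivision diagonal $D_r$, the interchange of Prop.\ \ref{tot_phi_interchange}, the dual-commutation $\overline\alpha$, the HHR diagonal $\Delta$ on $N^\cyc_\bullet cDX$, and the fibrant cyclotomic structure on $f\Sph$. The cyclotomic structure on $\Sigma^\infty_+ LX$ factors through an analogous composite using the subdivision of $\Sigma^\infty_+ X^{\bullet+1}$, with the analogue of $\Delta$ being the iterated smash diagonal on $X^{\sma r}$ (which corresponds to the $r$-fold cover on $S^1$). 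Compatibility of the two composites reduces to the single statement that SW coevaluation intertwines the HHR diagonal $cDX \to \Phi^{C_r}(cDX)^{\sma r}$ with the smash diagonal $X \to X^{\sma r}$. Rather than chasing coherence data by hand, I verify the two resulting natural transformations on free spectra and then invoke Theorem \ref{intro_rigidity} to conclude they agree in general; the left-hand side is cyclotomic because this same machinery applied to $THH(\Sigma^\infty_+ \Omega X)$ recovers the standard cyclotomic structure on $\Sigma^\infty_+ LX$.

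Now suppose $X$ is finite and simply-connected. The underlying non-equivariant map is a stable equivalence by Prop.\ \ref{LX_is_derived_tot} (this is the content of Theorem \ref{nonequivariant_duality_thm}). To upgrade to an $\mc F$-equivalence, by Prop.\ \ref{prop:geometric_fixed_pts_measure_equivs} it suffices to compare derived $C_n$-geometric fixed points for each $n \geq 1$. The source $\Sigma^\infty_+ LX \simeq THH(\Sigma^\infty_+ \Omega X)$ is tight cyclotomic by Theorem \ref{recall_cyclic_nerves_are_tight}, so $\Phi^{C_n}$ of a cofibrant replacement is modeled by the source itself. For the target, tightness of $THH(cDX)$ makes each $\gamma_n$ an isomorphism, and plugging this into the composite of Prop.\ \ref{cyclotomic_str_in_terms_of_tot} provides a cocyclic description of $\rho_n^*\Phi^{C_n}$ of the target whose coskeletal fibers are (suspension spectra of smash powers of $X$) that coincide with the fibers computed in the proof of Prop.\ \ref{LX_is_derived_tot}. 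Simple-connectedness forces the resulting tower to converge, showing that the pre-cyclotomic structure map is itself an $\mc F$-equivalence. This simultaneously promotes the target to a cyclotomic spectrum and, combined with the nonequivariant equivalence, yields the $\mc F$-equivalence of cyclotomic spectra.

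The main obstacle is the cyclotomic compatibility in the second step. The diagrams interleave subdivision, totalization, SW duality, HHR diagonals, and the interchange $\overline\alpha$; without rigidity one would face an elaborate coherence verification among all of these, but Theorem \ref{intro_rigidity} collapses the problem to the combinatorially simpler assertion on free spectra, where the SW coevaluation has a transparent formula. A secondary subtlety is ensuring $cDX$ is a cofibrant \emph{ring} (not just a cofibrant spectrum) so that $THH(cDX)$ is tight cyclotomic; this is handled by the particular cofibrant replacement $cDX = \Sph \vee c\ti DX$ constructed above the statement.
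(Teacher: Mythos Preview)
Your overall architecture matches the paper's: build the map through the cocyclic spectrum $Z^\bullet = \Sigma^\infty_+ X^{\bullet+1}$ mapping to $Y^\bullet = F(N^\cyc_\bullet cDX,f\Sph)$, verify pre-cyclotomic compatibility, then use a connectivity/convergence argument on the skeletal tower to show the target is cyclotomic when $X$ is finite and simply-connected. The last step of your argument, and the final deduction (nonequivariant equivalence of cyclotomic spectra $\Rightarrow$ $\mc F$-equivalence), are essentially what the paper does.

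The gap is your appeal to Theorem~\ref{intro_rigidity} in the compatibility step. That theorem concerns natural transformations out of $\Phi^G X_1 \sma \cdots \sma \Phi^G X_k$ \emph{as a functor of arbitrary orthogonal $G$-spectra $X_i$}. The square you need here involves the fixed space $X$, the fixed spectrum $cDX$, the Spanier--Whitehead coevaluation between them, and the space-level diagonal $X_+ \to (X^r)_+$; none of these is a functor of a variable $G$-spectrum in the sense the rigidity theorem requires, so there is no way to ``check on free spectra and invoke rigidity.'' The paper does not use rigidity at this point. It splits the compatibility diagram into two halves. The half involving $\Sigma^\infty_+ LX \to \Tot(Z^\bullet)$ is verified by an explicit level-$0$ calculation with the loop-space formula $(r_1,\ldots,r_{k-1},\gamma)\mapsto(\gamma(0),\gamma(r_1),\ldots)$, which is genuinely a space-level check and cannot be absorbed into any statement about $\Phi^G$. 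The half involving $Z^\bullet \to Y^\bullet$ is reduced, level by level, to the commutativity of
\[
\xymatrix{
\Phi^{C_r}\Sigma^\infty_+ X^{rk}\sma\Phi^{C_r}(cDX)^{\sma rk} \ar[r]^-{\alpha} & \Phi^{C_r}\bigl(\Sigma^\infty_+ X^{rk}\sma(cDX)^{\sma rk}\bigr) \ar[r] & \Phi^{C_r}\Sph \ar[d]^{\cong} \\
\Sigma^\infty_+ X^{k}\sma(cDX)^{\sma k} \ar[u]^{\Delta\sma\Delta} \ar[rr] & & \Sph
}
\]
which follows from naturality of the HHR diagonal and the fact that $\Delta$ is monoidal. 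Those ingredients were indeed \emph{established earlier} using rigidity (Propositions~\ref{rotating_compatibility}--\ref{normdiag_iterated_commute}), but here they are simply applied; no new rigidity argument is run.

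A smaller point: your sentence ``the left-hand side is cyclotomic because this same machinery applied to $THH(\Sigma^\infty_+ \Omega X)$\ldots'' is unnecessary and slightly misleading. The cyclotomic structure on $\Sigma^\infty_+ LX$ is the standard one coming from the $r$-fold cover $LX \cong (LX)^{C_r}$, and that is what the paper compares against; there is no need to route through $\Omega X$.
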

\begin{cor}
When $X$ is a finite simply-connected CW complex, the equivalence between $THH(\Sigma^\infty_+ \Omega X)$ and the functional dual of $THH(DX)$ is an equivalence of cyclotomic spectra.
\end{cor}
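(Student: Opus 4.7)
The plan is to realize the map as a totalization of a natural transformation of cocyclic spectra. I would start from the Spanier--Whitehead duality evaluation $X_+ \sma cDX \to f\Sph$, or equivalently its adjoint $\Sigma^\infty_+ X \to F(cDX, f\Sph)$. Smashing $n+1$ copies, applying the lax monoidal structure of $F(-,-)$, and composing with the iterated multiplication $(f\Sph)^{\sma(n+1)} \to f\Sph$ (valid because $f\Sph$ is a commutative ring) yields at each cocyclic level a natural map $\Sigma^\infty_+ X^{n+1} \to F((cDX)^{\sma(n+1)}, f\Sph)$. A check of definitions will show this assembles into a map of cocyclic orthogonal spectra; totalizing and then precomposing with the interchange of Proposition \ref{LX_is_derived_tot} produces the desired map $\Sigma^\infty_+ LX \to F(THH(cDX), f\Sph)$, automatically $S^1$-equivariant because every step is $\mathbf\Lambda$-natural.

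Next, I would verify compatibility with the pre-cyclotomic structures using Proposition \ref{cyclotomic_str_in_terms_of_tot}, which decomposes the structure map on the right-hand side as a composite of the cyclic diagonal $D_r$, the interchange of Proposition \ref{tot_phi_interchange}, the restriction $\overline\alpha$, and the Hill--Hopkins--Ravenel diagonal $\Delta_r$ applied to $N^\cyc_\bullet cDX$. The cyclotomic structure on $\Sigma^\infty_+ LX$ admits a parallel decomposition via the edgewise subdivision $D_r$ of the cocyclic model $\Map(S^1_\bullet, X)$ together with the natural inclusion $(LX)^{C_r} \hookrightarrow LX$. The compatibility then reduces to a statement at the cocyclic level: the iterated duality map intertwines the Hill--Hopkins--Ravenel diagonal with evaluation into $f\Sph$. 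I expect the rigidity theorem (Theorem \ref{intro_rigidity}) to dispose of the coherence questions that arise, forcing the resulting natural transformations built from $\Phi^{C_r}$ and smash products to coincide.

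For finite simply-connected $X$, Theorem \ref{nonequivariant_duality_thm} provides that the underlying map is a weak equivalence, while the left-hand side is cyclotomic with $\Phi^{C_n}(c\Sigma^\infty_+ LX) \simeq \Sigma^\infty_+ (LX)^{C_n} \simeq \Sigma^\infty_+ LX$ via the power map. To simultaneously show that the right-hand side is cyclotomic and that the map is an $\mc F$-equivalence, I would compute the derived $\Phi^{C_n}$ of $F(THH(cDX), f\Sph)$ through the totalization description: the tight cyclotomic structure of $THH(cDX)$ (Theorem \ref{construction_of_THH}) together with $\Phi^{C_n}(f\Sph) \simeq f\Sph$ should identify the pre-cyclotomic structure map of the right-hand side with the original function spectrum, up to the interchange of Proposition \ref{tot_phi_interchange}. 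The hard part will be that this interchange $\Phi^{C_n} F(Z,E) \to F(\Phi^{C_n} Z, \Phi^{C_n} E)$ is not an equivalence in general, and $THH(cDX)$ is not a finite $S^1$-spectrum; I plan to exploit the simply-connectedness hypothesis to control the convergence of the cocyclic tower as in the proof of Proposition \ref{LX_is_derived_tot}, thereby reducing to finite-cellular stages on which Spanier--Whitehead duality commutes with geometric fixed points, and then passing to the limit.
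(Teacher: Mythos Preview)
Your proposal is correct and follows essentially the same route as the paper: construct the cocyclic map $\Sigma^\infty_+ X^{k+1} \to F((cDX)^{\sma(k+1)}, f\Sph)$ from the evaluation pairing, totalize and precompose with the interchange of Proposition~\ref{LX_is_derived_tot}, verify pre-cyclotomic compatibility via Proposition~\ref{cyclotomic_str_in_terms_of_tot}, and then establish the cyclotomic property on the right by filtering by skeleta of $N^\cyc_\bullet cDX$ and using a connectivity argument to commute derived $\Phi^{C_r}$ past the inverse limit.

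One small caution: the rigidity theorem handles the coherences among $\alpha$, $\Delta$, and iterated fixed points on the $Y^\bullet$ side (and the paper's final square indeed reduces to the fact that the norm diagonal is monoidal and natural), but it will not by itself verify the left-hand rectangle comparing the intrinsic cyclotomic structure on $\Sigma^\infty_+ LX$ (coming from the $r$th-power homeomorphism $(LX)^{C_r} \cong LX$) with the one produced by the cocyclic recipe on $Z^\bullet = \Sigma^\infty_+ X^{\bullet+1}$. The paper dispatches that comparison by an explicit level-0 coordinate computation on $\Map(S^1_\bullet,X)$, since everything involved is a suspension spectrum; you should expect to do the same rather than appeal to rigidity there.
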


\begin{proof}
We will describe explicitly the map of Thm \ref{nonequivariant_duality_thm} and check that it respects the pre-cyclotomic structures.
Then we will use connectivity arguments to argue that these pre-cyclotomic spectra are actually cyclotomic when $X$ is finite.

As above, let $Y^\bullet$ denote the cocyclic $S^1$-spectrum
\[ Y^k = F(N^\cyc_k cDX,f\Sph) = F((cDX)^{\sma (k+1)},f\Sph) \]
The totalization of $Y^\bullet$ is isomorphic to $F(|N^\cyc cDX|, f\Sph)$, and Prop \ref{cyclotomic_str_in_terms_of_tot} gives us a recipe for the pre-cyclotomic structure. Furthermore $Y^\bullet$ is the dual of a Reedy cofibrant simplicial spectrum, and is therefore Reedy fibrant.

We will construct a map $\Sigma^\infty_+ LX \to \Tot(Y^\bullet)$ by going through an intermediary $\Tot(Z^\bullet)$.
Let $Z^\bullet$ be the cocyclic spectrum $\Sigma^\infty_+ \Map(S^1_\bullet,X)$, so that
\[ Z^k = \Sigma^\infty_+ \Map(\mathbf\Lambda([k],[0]),X) \cong \Sigma^\infty_+ X^{k+1} \]
with $\mathbf\Lambda$ action given by applying $\Sigma^\infty_+$ to the usual $\mathbf\Lambda^\op$ action on the $\mathbf\Lambda(-,[0])$ term.
The interchange of Prop \ref{LX_is_derived_tot} gives a map of spectra
\[ \Sigma^\infty_+ LX \ra \Tot(Z^\bullet) \]
Next we construct a map of cocyclic spectra $Z^\bullet \to Y^\bullet$.
The evaluation map composed with the product in $\Sph$ and fibrant replacement
\[ \left(\Sigma^\infty_+ X\right)^{\sma (k+1)} \sma c(DX)^{\sma (k+1)} \ra\left(\Sigma^\infty_+ X\right)^{\sma (k+1)} \sma (DX)^{\sma (k+1)} \]
\[ \ra (\Sph)^{\sma (k+1)} \ra \Sph \ra f\Sph \]
is adjoint to a map
\[ Z^k = \Sigma^\infty_+ X^{k+1} \ra F((cDX)^{\sma (k+1)},f\Sph) = Y^k \]
Of course, this map is actually an equivalence when $X$ is finite. The map clearly commutes with the $S^1$-action on each level coming from $f\Sph$.
We check that it commutes with the cocyclic structure: for each $\gamma \in \mathbf\Lambda([k],[\ell])$ we have the square
\[ \xymatrix{
\Map(\Lambda[0]_k,X) \cong X^{k+1} \ar[r] \ar[d]^-\gamma & F((cDX)^{\sma k+1},f\Sph) \ar[d]^-\gamma \\
\Map(\Lambda[0]_\ell,X) \cong X^{\ell+1} \ar[r] & F((cDX)^{\sma \ell+1},f\Sph) } \]
which commutes if this one commutes:
\[ \xymatrix{
X^{k+1} \sma (cDX)^{\ell+1} \ar[r]^-{\gamma \sma \id} \ar[d]^-{\id \sma \gamma} & X^{\ell+1} \sma (cDX)^{\ell+1} \ar[d] \\
X^{k+1} \sma (cDX)^{k+1} \ar[r] & \Sph } \]
Both branches have the same description: $\gamma$ gives a map from a necklace with $k+1$ beads and every segment labeled by $X$ to a necklace with $\ell+1$ beads and every segment labeled by $DX$.
Each copy of $X$ is sent by $\gamma$ to a string of $a$ copies of $DX$; we apply the diagonal to $X_+ \overset\Delta\to (\prod^a X)_+$ and pair with those $a$ copies of $DX$.

Therefore we have a map of cocyclic $S^1$-spectra $Z^\bullet \to Y^\bullet$, with $S^1$ acting trivially on each cosimplicial level of $Z^\bullet$. Composing with the interchange map of Proposition \ref{LX_is_derived_tot} gives an $S^1$-equivariant map
\begin{equation}\label{lx_to_ybullet}
\Sigma^\infty_+ LX \ra \Tot(Z^\bullet) \ra \Tot(Y^\bullet).
\end{equation}
When $X$ is finite, this is the equivalence of Theorem \ref{nonequivariant_duality_thm}. In fact, when $X$ is finite the map $Z^\bullet \to Y^\bullet$ is an equivalence on each cosimplicial level, and we may therefore consider $Y^\bullet$ to be a Reedy fibrant replacement of $Z^\bullet$, so \eqref{lx_to_ybullet} is also a model for the derived interchange map of Proposition \ref{LX_is_derived_tot}.
 %~ in \cite{kuhn2004mccord} and \cite{campbell2014derived}.

% To make the equivalence with the map in \cite{kuhn2004mccord} more explicit, we check that the evaluation map
% \[ S^1_+ \ra \Map_{\textup{comm rings}}(DX,DLX) \]
% goes through Kuhn's adjunction to give the map of commutative rings
% \[ S^1_+ \otimes DX = (S^1_+)^\times \sma_{\mc F} DX^{\sma} \ra DLX \]
% given at level $(S^1_+)^k \sma (DX)^{\sma k}$ by the formula
% \[ (\theta_1,\ldots,\theta_k,f_1,\ldots,f_k) \mapsto (\gamma \mapsto f_1(\gamma(\theta_1)) \sma \ldots \sma f_k(\gamma(\theta_k))) \]
% To translate this to $THH$ we must make a new identification
% \[ (S^1_+)^{\times \bullet} \sma_{\mc F} DX^{\sma \bullet} \cong \Lambda^{\bullet - 1}_+ \sma_{\mathbf\Lambda} DX^{\sma \bullet} = B^\cyc DX \]
% As in a previous comment, we think of each finite set $i_+$ as the set of arrows in the cycle category $[i-1]$, plus a basepoint.
% Then the homeomorphism we want is induced by the inclusion
% \[ \Lambda^{i-1} \ra (S^1)^{i} \]
% \[ (\theta; r_1, r_2, \ldots, r_{i-1}) \mapsto (1-\theta, r_1-\theta, \ldots, r_{i-1}-\theta) \]
% Up to a flip of the circle coordinate, this then gives the map from $LX$ to the dual of the cyclic bar construction that we explicitly use in the paragraph below.

Our next task is to check that the map \eqref{lx_to_ybullet} respects the pre-cyclotomic structures on the two ends.
The recipe in Prop \ref{cyclotomic_str_in_terms_of_tot} actually defines a pre-cyclotomic structure on $\Tot(Z^\bullet)$ as well, so our problem breaks up into two steps:
\begin{equation}\label{big_cyclotomic_structure_matchup}
\xymatrix{
 \Phi^{C_r} \Sigma^\infty_+ LX \ar[ddd]^-\cong \ar[r]^-\cong & \rho_r^* \Phi^{C_r} \Tot(Z^\bullet) \ar[r] \ar[d]^-{D_r}_-\cong & \rho_r^* \Phi^{C_r} \Tot(Y^\bullet) \ar[d]^-{D_r}_-\cong \\
 & \rho_r^* \Phi^{C_r} \Tot(\sd_r Z^\bullet) \ar[r] \ar[d] & \rho_r^* \Phi^{C_r} \Tot(\sd_r Y^\bullet) \ar[d] \\
 & \rho_r^*\Tot(\Phi^{C_r} \sd_r Z^\bullet) \ar[r] \ar@{<-}[d]^-\Delta_-\cong & \rho_r^* \Tot(\Phi^{C_r} \sd_r Y^\bullet) \ar[d]^-{F(\Delta,c_r) \circ \overline\alpha} \\
 \Sigma^\infty_+ LX \ar[r]^-\cong & \Tot(Z^\bullet) \ar[r] & \Tot(Y^\bullet)
}
\end{equation}
We start with the left-hand rectangle of (\ref{big_cyclotomic_structure_matchup}), where everything is a suspension spectrum and so all maps are completely determined by what they do at spectrum level 0.
% \[ \xymatrix{
% LX^{C_r} \ar[ddd]^-\cong \ar[r]^-\cong & \Tot(Z^\bullet)^{C_r} \ar[d]^-\cong \\
% & \Tot(\sd_r Z^\bullet)^{C_r} \ar[d] \\
% & \Tot((\sd_r Z^\bullet)^{C_r}) \ar@{<-}[d]^-\cong \\
% LX \ar[r]^-\cong & \Tot(Z^\bullet) } \]
The horizontal homeomorphisms may be computed by observing that $\Lambda[0]_k = \mathbf\Lambda([k],[0])$ has $(k+1)$ points $f_0,\ldots,f_k$, where $f_i: \Z \to \Z$ sends $0$ through $i-1$ to $0$ and $i$ through $k$ to $1$ (or if $i = 0$ it sends $0$ through $k$ to $0$).
Using our choice of homeomorphism $|\Lambda[0]| \cong \R/\Z$ from section \ref{sec:cyclic}, the $k$-simplex given by $f_i$ maps down to the circle $\R/\Z$ by the formula
\[ (t_0,\ldots,t_k) \mapsto (t_i + \ldots + t_k) \sim (1 - (t_0 + \ldots + t_{i-1})) \]
Negating the circle and reparametrizing $\Delta^k \subset \R^k$ as points $(x_1,\ldots,x_k)$ for which $0 \leq x_1 \leq x_2 \leq \ldots \leq x_k \leq 1$ according to the rule $x_i = t_0 + \ldots + t_{i-1}$, we arrive at the simple rule
\[ (f_i,x_1,\ldots,x_k) \mapsto x_i, \qquad x_0 := 0 \]
So now the map $LX \to \Tot(X^{\bullet + 1})$ can be expressed by the formula
\[ \Delta^{k-1} \times LX \ra X^{k} \]
\[ (r_1,\ldots,r_{k-1},\gamma) \mapsto (\gamma(0),\gamma(r_1),\ldots,\gamma(r_{k-1})) \]
as in \cite{cohen2002homotopy}.
Under this change of coordinates, both branches give
\begin{eqnarray*}
\gamma(-) &\ra & (r_1,\ldots,r_{k-1}) \mapsto (\gamma(0),\gamma(\frac1r r_1), \gamma(\frac1r r_2), \ldots, \gamma(\frac1r r_{k-1}),\gamma(0),\gamma(\frac1r r_1), \ldots)
\end{eqnarray*}
and so the square commutes.

Returning to (\ref{big_cyclotomic_structure_matchup}), the top and middle squares of the right-hand row automatically commute by the naturality of the cosimplicial diagonal and the interchange map with geometric fixed points.
The final square is then
\[ \xymatrix{
 \Tot(\Phi^{C_r} \sd_r Z^\bullet) \ar[r] \ar@{<-}[d]^-\Delta_-\cong & \rho_r^* \Tot(\Phi^{C_r} \sd_r Y^\bullet) \ar[d]^-{F(\Delta,c_r) \circ \overline\alpha} \\
 \Tot(Z^\bullet) \ar[r] & \Tot(Y^\bullet)
} \]
The map $\Delta$ is the cocyclic map
\[ \Phi^{C_r} \Sigma^\infty_+ X^{rk} \lcongar \Sigma^\infty_+ X^{k} \]
given by the Hill-Hopkins-Ravenel diagonal; this is almost tautologically cosimplicial.
The map $F(\Delta,c_r) \circ \overline\alpha$ is also cocyclic, so to check that this square commutes it suffices to check level $k-1$.
This boils down to this rectangle:
\[ \xymatrix{
\Phi^{C_r} \Sigma^\infty_+ X^{rk} \sma \Phi^{C_r} (cDX)^{\sma rk} \ar[r]^-\alpha & \Phi^{C_r} (\Sigma^\infty_+ X^{rk} \sma (cDX)^{\sma rk}) \ar[r] & \Phi^{C_r} \Sph \ar[d]^-\cong \\
\Sigma^\infty_+ X^{k} \sma (cDX)^{\sma k} \ar[u]_-{\Delta \sma \Delta} \ar[ur]_-\Delta \ar[rr] && \Sph } \]
The top triangle commutes because the norm diagonal commutes with smash products.
The trapezoid commutes because the inverse of the right-hand isomorphism is the norm diagonal on $\Sph$ (in fact there is only one isomorphism $\Sph \to \Sph$), and the norm diagonal is natural.
This finishes the proof that $\Sigma^\infty_+ LX \to \Tot(Y^\bullet)$ is a map of pre-cyclotomic spectra.

For the second phase of the proof, we assume that $X$ is finite and 1-connected, and we check that $\Tot(Y^\bullet)$ is actually cyclotomic; in other words the map
\[ \Phi^{C_r} \Tot(Y^\bullet) \ra \Tot(Y^\bullet) \]
is nonequivariantly an equivalence when $\Phi^{C_r}$ is left-derived.
For simplicity, we may forget the $S^1$-actions and remember only the cosimplicial $C_r$-action on $\sd_r Y^\bullet$, making it a cosimplicial $C_r$-spectrum.
Then our structure maps respect the restriction to the $k$-skeleton for each $k \geq 0$:
\begin{equation}\label{dual_is_cyclotomic_one_level_at_a_time}
\resizebox{\textwidth}{!}{\xymatrix @C=2em{
\Phi^{C_r} cF(|\sd_r N^\cyc_\bullet cDX|,f\Sph) \ar[r]^-{\overline\alpha} \ar[d] & F(\Phi^{C_r}|\sd_r N^\cyc_\bullet cDX|,\Phi^{C_r}f\Sph) \ar[r]^-{F(\Delta,c_r)} \ar[d] & F(|N^\cyc_\bullet cDX|,f\Sph) \ar[d] \\
\Phi^{C_r} cF(|\sk_k \sd_r N^\cyc_\bullet cDX|,f\Sph) \ar[r]^-{\overline\alpha} & F(\Phi^{C_r}|\sk_k \sd_r N^\cyc_\bullet cDX|,\Phi^{C_r}f\Sph) \ar[r]^-{F(\Delta,c_r)} & F(|\sk_k N^\cyc_\bullet cDX|,f\Sph)  }}
\end{equation}
We first argue that the bottom horizontal composite is an equivalence for each value of $k$. The skeleta $|\sk_k N^\cyc_\bullet cDX|$ and $|\sk_k \sd_r N^\cyc_\bullet cDX|$ all have the homotopy type of a finite spectrum, so by \cite[III.1.9]{lewis1986equivariant}, the interchange map $\overline\alpha$ is an equivalence. Of course, the diagonal isomorphism $\Delta$ from the proof of Theorem \ref{construction_of_THH} is an isomorphism of simplicial objects, so it also gives an isomorphism of skeleta. This is enough to conclude that the map $F(\Delta,c_r)$ in the bottom row is an equivalence.

Therefore we get two equivalent towers of spectra underneath $\Phi^{C_r} cF(|\sd_r N^\cyc_\bullet cDX|,f\Sph)$ and $F(|N^\cyc_\bullet cDX|,f\Sph)$, giving an equivalence of homotopy inverse limits
\[ \xymatrix @C=2em{
\Phi^{C_r} cF(|\sd_r N^\cyc_\bullet cDX|,f\Sph) \ar[r] \ar[d] & F(|N^\cyc_\bullet cDX|,f\Sph) \ar[d]^-\sim \\
\underset{k}\holim\, \Phi^{C_r} cF(|\sk_k \sd_r N^\cyc_\bullet cDX|,f\Sph) \ar[r]^-\sim & \underset{k}\holim\, F(|\sk_k N^\cyc_\bullet cDX|,f\Sph)  } \]
To finish proving that the top horizontal map is an equivalence, it remains to show that on the left-hand side, the derived geometric fixed points $\Phi^{C_r} (c-)$ commute with the homotopy inverse limit. This will require us to look more closely at the homotopy fibers of the maps in the homotopy limit system.

Although $\sd_r N^\cyc_\bullet cDX$ and $N^\cyc_\bullet cDX^{\sma r}$ are not isomorphic as simplicial objects, they have the same degeneracy maps and therefore have isomorphic latching maps. The cofiber of this latching map
\[ (\Sph \ra (cDX)^{\sma r})^{\square k} \square (* \ra (cDX)^{\sma r}) \]
is the smash product of $k$ copies of the $C_r$-equivariant cofiber of $\Sph \ra (cDX)^{\sma r}$ and one copy of $(cDX)^{\sma r}$. The $C_r$-equivariant dual of this is a smash product of $k$ copies of $\Sigma^\infty X^r$ and one copy of $\Sigma^\infty_+ X^r$.

To evaluate the homotopy fiber of the map of our homotopy limit system
\[ F(|\sk_k \sd_r N^\cyc_\bullet cDX|,f\Sph) \ra F(|\sk_{k-1} \sd_r N^\cyc_\bullet cDX|,f\Sph), \]
we observe that it is the dual of the cofiber of the inclusion of skeleta. By the usual latching square, this cofiber is the $k$-fold suspension of the cofiber of the latching map. Therefore our desired homotopy fiber is equivalent as a $C_r$-spectrum to
\[ \Omega^k \Sigma^\infty (X^{r})^{\sma k} \sma X^{r}_+. \]

Since $X$ is 1-connected, we can arrange so that its lowest non-basepoint cell is in dimension 2. This leads to a $C_r$-equivariant cell structure on $X^r$ in which the lowest non-basepoint cell is in the diagonal, and is also dimension 2, so $(X^{r})^{\sma k} \sma X^r_+$ has lowest non-basepoint cell in dimension $2k$. By induction on these cells, the genuine fixed points
\[ (f\Omega^k \Sigma^\infty (X^{r})^{\sma k} \sma X^{r}_+)^H \]
are at least $(k-1)$-connected, for each subgroup $H \leq C_r$. Since genuine fixed points commute with homotopy limits, we conclude that the fiber of the map from the homotopy limit to the $k$th term in the homotopy limit system
\[ F(|\sd_r N^\cyc_\bullet cDX|,f\Sph) \ra F(|\sk_k \sd_r N^\cyc_\bullet cDX|,f\Sph) \]
has $k$-connected genuine fixed points for all $H \leq C_r$.

The derived geometric fixed points of this fiber are also $k$-connected. To see this, we use an equivalent definition for the derived geometric fixed points of $E$, as the genuine fixed points of $\ti EP \sma E$ for a certain complex $\ti EP$ \cite[B.10.1]{hhr}.
Our claim then follows by induction on the cells of $\ti EP$, using the identifications
\[ (f(\Sigma^n G/H_+ \sma E))^G \simeq \Sigma^n F(G/H_+,fE)^G \simeq \Sigma^n (fE)^H. \]
In fact, this proves that for any finite $G$, a $G$-spectrum $E$ with $k$-connected genuine fixed points $(fE)^H$ for all $H \leq G$ will also have $k$-connected geometric fixed points $\Phi^H cE$ for all $H \leq G$.

Finally, since derived geometric fixed points commute with fiber sequences, we conclude that the map of derived geometric fixed points
\[ \Phi^{C_r} cF(|\sd_r N^\cyc_\bullet cDX|,f\Sph) \ra \Phi^{C_r} cF(|\sk_k \sd_r N^\cyc_\bullet cDX|,f\Sph) \]
is $(k+1)$-connected. Therefore the map to the homotopy limit is an equivalence:
\[ \Phi^{C_r} cF(|\sd_r N^\cyc_\bullet cDX|,f\Sph) \simar \underset{k}\holim\, \Phi^{C_r} cF(|\sk_k \sd_r N^\cyc_\bullet cDX|,f\Sph) \]
This finishes the proof that $\Tot(Y^\bullet) = F(|N^\cyc_\bullet cDX|,f\Sph)$ is cyclotomic.

In conclusion, our map $\Sigma^\infty_+ LX \to \Tot(Y^\bullet)$ is a map of cyclotomic spectra.
We already know that it is a stable equivalence if we ignore the circle action.
But any such equivalence of cyclotomic spectra is automatically an $\mc F$-equivalence of $S^1$ spectra, so we are done.
\end{proof}

\begin{rmk}\label{thh_d_odd_sphere_homology}
One may similarly check that this duality preserves multiplications and Adams operations. As a result, when $n \geq 1$, the homology of $THH(DS^{2n+1})$ is a tensor of a divided power algebra and an exterior algebra
\[ 
H_*(THH(DS^{2n+1})) \cong H^{-*}(LS^{2n+1}) \cong \Gamma[\alpha] \otimes \Lambda[\beta] \]
where $|\alpha| = -2n$ and $|\beta| = -(2n+1)$. 
The Adams operations $\psi^n$ are given by
\[ \psi^n(\alpha_i\beta^j) = n^i\alpha_i\beta^j. \]
\end{rmk}

\bibliographystyle{amsalpha}
\bibliography{thhdx}{}

Department of Mathematics \\
University of Illinois at Urbana-Champaign \\
1409 W Green St \\
Urbana, IL 61801 \\
\texttt{cmalkiew@illinois.edu}

\end{document}